\documentclass[sn-mathphys-num]{sn-jnl}

\usepackage{graphicx}%
\usepackage{multirow}%
\usepackage{amsmath,amssymb,amsfonts}%
\usepackage{amsthm}%
\usepackage{mathrsfs}%
\usepackage[title]{appendix}%
\usepackage{xcolor}%
\usepackage{textcomp}%
\usepackage{manyfoot}%
\usepackage{booktabs}%
\usepackage{listings}%
\usepackage{mathtools}
\usepackage{subcaption}

\theoremstyle{plain}
\newtheorem{proposition}{Proposition}[section]

\newtheorem{theorem}[proposition]{Theorem}
\newtheorem{corollary}[proposition]{Corollary}

\theoremstyle{definition}
\newtheorem{definition}[proposition]{Definition}
\newtheorem{example}[proposition]{Example}

\newtheorem{assumption}[proposition]{Assumption}

\theoremstyle{remark}
\newtheorem{remark}[proposition]{Remark}

\usepackage{lipsum}
\usepackage{epstopdf}
\ifpdf
  \DeclareGraphicsExtensions{.eps,.pdf,.png,.jpg}
\else
  \DeclareGraphicsExtensions{.eps}
\fi

\usepackage{hyperref}
\usepackage{soul}

\usepackage[shortlabels]{enumitem}
\usepackage{resizegather}

\DeclareMathOperator*{\argmin}{\operatorname{argmin}}

\usepackage[ruled,vlined,linesnumbered,algo2e,algosection]{algorithm2e}

\usepackage[capitalize,noabbrev,nameinlink]{cleveref}

\newcommand{\N}{\mathbb{N}}

\newcommand{\shift}{r}

\setlist[enumerate]{label=(\alph*)}
\SetKw{KwOr}{or}

\begin{document}

\title[Fully Convergent PGM with Momentum under Geometric Constraints]{Fully Convergent Projection-based  Methods with Momentum under Nonconvex Geometric Constraints}


\author[1]{\fnm{Matteo} \sur{Lapucci}}\email{matteo.lapucci@unifi.it}

\author*[2]{\fnm{Diego} \sur{Scuppa}}\email{diego.scuppa@uniroma1.it}

\affil[1]{\orgdiv{Dipartimento di Ingegneria dell'Informazione Firenze}, \orgname{Università di Firenze}, \orgaddress{\street{Via di Santa Marta 3}, \city{Firenze}, \postcode{50135}, \state{FI}, \country{Italy}}}

\affil*[2]{\orgdiv{Dipartimento di Ingegneria Informatica, Automatica e Gestionale ``Antonio Ruberti''}, \orgname{Sapienza Università di Roma}, \orgaddress{\street{Via Ariosto 25}, \city{Roma}, \postcode{00185}, \state{RM}, \country{Italy}}}


\abstract{
Nonlinear optimization problems with  complicated, nonconvex, yet geometrically structured constraints can be tackled by projected-gradient methods: under weak regularity assumptions, these approaches were recently proved to possess convergence properties to the strongest stationarity conditions. 
In this work, we show how momentum terms, commonly used in nonlinear optimization to speed up the convergence process, can be integrated within this algorithmic framework without harming convergence guarantees. 

Preliminarily, we highlight an intrinsic issue induced by the direct replacement of the negative gradient with a general descent direction within the projected approach. 
Then, we present suitable backtracking mechanisms for the pre-projection step, allowing us to integrate momentum terms in the direction. By this technique, we can specifically ensure, without any smoothness assumptions, convergence to Mordukhovich stationarity as long as the base directions asymptotically revert to the negative gradient for small stepsizes; moreover, if the base search direction reverts exactly to the negative gradient for the smallest steps, the algorithm is proved to converge to Bouligand and Proximally stationary points, with and without (local) smoothness assumptions respectively.

Finally, the proposed procedure is  numerically tested on some classes of problems, namely, sparsity and bounded-rank constrained problems; the results indicate that the proposed method is computationally effective, taking advantage of the additional information provided by the momentum term.
}
\keywords{Projected Gradient Method, Momentum, Geometric Constraints, Stationarity, Global Convergence}


\pacs[MSC Classification]{90C26; 90C30; 90C46}

\maketitle

\section{Introduction}

In this paper, we deal with optimization problems of the form
\begin{equation}
    \label{eqn:prob}
    \begin{aligned}
        \min_{x\in \mathbb{X}}\;&f(x)\\\text{s.t. }& x\in D,    
    \end{aligned}
\end{equation}
where $f:\mathbb{X}\to \mathbb{R}$ is a differentiable function over a Euclidean space $\mathbb{X}$ and $D\subset \mathbb{X}$ is a generic, possibly complicated set, only assumed to be nonempty and closed:  it might be nonconvex, nonsmooth, discontinuous or disconnected. However, we assume to have access to a projection operator onto this set.

This very general setting, referred to as optimization problems with structured \textit{geometric constraints}~\cite{jia2023augmented,de2023constrained,kanzow2023inexact}, covers a wide variety of relevant optimization problems: complementarity constraints~\cite{ye1999optimality,pang1999complementarity}, vanishing constraints~\cite{achtziger2008mathematical}, switching constraints~\cite{mehlitz2020stationarity}, cardinality constraints~\cite{beck2013sparsity} and low-rank constraints~\cite{levin2023finding,olikier2026lowrank} are all instances of this general framework.

To address problems of the form~\eqref{eqn:prob}, the natural route consists in exploiting first (and possibly higher) order information available about the objective function and the available projection operator to handle the constraint. Algorithms inspired by the projected gradient method (PGM) for problems with convex constraints (see, e.g.,~\cite[Ch.\ 2]{bertsekas1995nonlinear},~\cite[Ch.\ 20]{grippo2023introduction} or~\cite{birgin2014spectral}) have been devised to deal with specific instances of the problem, such as the Iterative Hard Thresholding method for sparsity constraints~\cite{blumensath2008iterative,beck2013sparsity}. The method was later generalized to the wider scenario~\cite{jia2023augmented}: coupled with a backtracking search along the projection path,
the algorithm then guarantees  the (nonmonotone~\cite{grippo1986nonmonotone,zhang2004nonmonotone}) sufficient decrease of the objective while leveraging a spectral stepsize selection rule 
well-known for the case of convex constraints~\cite{birgin2000nonmonotone} and which was observed to be similarly efficient in the new scenario.

Actually, the PGM represents a specialized instance of the proximal gradient method for composite optimization problems; for the case of a nonconvex nonsmooth term, proximal algorithms with monotone and nonmonotone line searches have been thoroughly analyzed in recent years~\cite{kanzow2022convergence,de2023proximal}, possibly taking into account the use of the Forward-Backward Envelope (FBE)~\cite{patrinos2013proximal}  as a merit function for  a globalization strategy~\cite{stella2017simple,themelis2018forward,de2022proximal}.

For the particular case of problem~\eqref{eqn:prob}, however, stationarity-type necessary optimality conditions are not uniquely defined. In fact, three stationarity conditions, with increasing strength in general, can be stated, each one depending on the choice of the normal cone that we are willing to consider~\cite[Def.\ 6.3 and Ex.\ 6.16]{rockafellar1998variational}; briefly, following the nomenclature used for example in~\cite{olikier2025projected}, we have

\smallskip
\begin{itemize}
    \item \textit{M-stationarity}~\cite{benko2017estimating,kanzow2022convergence},  where ``M'' stands for Mordukhovich, requires the negative gradient of $f$ to belong to the general normal cone $\mathcal{N}_D(x)$ (also known as limiting~\cite{mordukhovich2018variational} or Mordukhovich normal cone);
    \item \textit{B-stationarity}~\cite{pang1999complementarity,benko2017estimating,mordukhovich2018variational}, where ``B'' stands for Bouligand, considers the regular normal cone $\widehat{\mathcal{N}}_D(x)$  (also known as Fréchet normal cone~\cite{mordukhovich2018variational,rockafellar1998variational,pauwels2024note});
    \item \textit{P-stationarity}~\cite{olikier2025projected}, where ``P'' stands for ``proximally'', takes into account the proximal normal cone $\widehat{{\mathcal{N}}}^P_D(x)$. 
\end{itemize}

\smallskip
As pointed out in~\cite{olikier2025projected}, it is possible to show that B- and P-stationarity are indeed the strongest necessary conditions for local optimality with problem~\eqref{eqn:prob} if the objective $f$ is continuously differentiable and $\nabla f$ is locally Lipschitz continuous, respectively; while in some relevant instances of~\eqref{eqn:prob} B- and P-stationarity coincide, M-stationarity is often verified at infinitely many points that are not B-stationary (see the discussion in~\cite[Sec.\ 7]{olikier2025projected}).

Under global $L$-smoothness assumptions, quite recent studies~\cite{themelis2018forward,pauwels2024note} proved that the PGM with constant stepsize converges to P-stationary points of problem~\eqref{eqn:prob}, substantially generalizing the result known for sparsity-constrained optimization~\cite{beck2013sparsity}.
As pointed out in~\cite{de2022proximal}, however, the global Lipschitz smoothness assumption is impractical for some of the major applications of the PGM, such as the use as an internal solver in augmented Lagrangian or sequential penalty type frameworks~\cite{jia2023augmented,kanzow2023inexact}.
For this reason, the algorithm, equipped with a nonmonotone line search, was studied in the absence of any smoothness assumption in~\cite{jia2023augmented}, where convergence to M-stationarity is guaranteed. 
This result, however, was not enough to rule out the occurrence of \textit{apocalypses}~\cite{levin2023finding}, i.e., situations where approximate stationarity of the solutions, measured according to B-stationarity definition, tends to zero, but the limit point is only M-stationary. 
Then, the analysis carried out in~\cite{olikier2025projected} allowed to fill the gap, proving that PGM with nonmonotone line searches converges to B-stationary points in general, and to P-stationary points under the mild local Lipschitz smoothness assumption.
A question left open, however, concerns the properties of the method when an arbitrary descent (or at least gradient-related) direction is employed as a reference direction in the update rule, in place of the negative gradient. 

The employment of the FBE~\cite{patrinos2013proximal} allows one to globalize proximal gradient variants that make use of directions associated with fast local convergence~\cite{themelis2018forward,bonettini2020convergence}; the PANOC algorithm~\cite{stella2017simple}, in particular, is guaranteed to converge to P-stationary points under global $L$-smoothness hypotheses, whereas the PANOC+ method described in~\cite{de2022proximal} can be shown to converge to M-stationary points under local smoothness. These properties are achieved  by a double ``quality check'', each one associated with the progressive reduction of a parameter: one condition to be attained accounts for the goodness of the stepsize, while the other monitors the quality of the search direction. The two backtracking procedures are intertwined with each other, and this mechanism might in the end be not ideal from the computational side.

On the other hand, we will point out in this work that the straightforward extension of the PGM suggested at the end of~\cite{olikier2025projected} is not actually sound. 
Throughout the manuscript, we then investigate the enhancement of the spectral projected gradient method of~\cite{jia2023augmented} modifying the base search direction with the addition of a momentum term~\cite{polyak1964some,lapucci2026globally}, with the goal of proving convergence results without global Lipschitz assumptions and without resorting to the FBE framework. 
The addition of momentum terms in iterative update rules is well-known to be beneficial in unconstrained optimization, especially in the (strongly) convex case~\cite{ghadimi2015global} and when working with incremental approaches for finite-sum problems related to data science and machine learning tasks~\cite{gitman2019understanding,sutskever2013importance}. In the nonconvex case, the closely related class of conjugate gradient methods had vast success~\cite[Ch.\ 12]{grippo2023introduction}, whereas the definition of globally convergent methods based on heavy-ball type terms is more recent~\cite{lee2022limited,lapucci2026globally}. Similar strategies were then furthermore adapted for the context of Riemannian optimization~\cite{tang2024riemannian,leggio2026riemannian} and within projected gradient approaches for general convex constraints~\cite{lapucci2026projected}.

The resulting framework, which relies on a curve backtracking mechanism~\cite{donnini2026efficient,jia2026projection,anonymous2026safely} for the pre-projection step, can be proved to converge: i) to M-stationary points of problem~\eqref{eqn:prob} without any smoothness assumptions, as long as the base directions asymptotically revert to the negative gradient for small stepsizes and ii) to B-stationary points without smoothness and P-stationary points under local smoothness if the base search direction reverts exactly to the negative gradient for very small positive steps.

We shall notice that the nonconvexity of the feasible set $D$ makes the straight generalization of the strategy in~\cite{lapucci2026projected} impossible, not just because an arbitrary convex combination of the projected gradient and projected momentum directions can likely result in an infeasible search direction, but most importantly because the projection arc induced by such direction could generate an ascent path. As we will detail, the latter is in fact the reason why PGM cannot be plainly employed with a base (descent) direction different than the negative gradient.

The rest of the paper is organized as follows: in Section~\ref{sec:prelim} we summarize some well-established concepts from constrained optimization and variational analysis that are needed for the core discussion of the paper; then, in Section~\ref{sec:example}, we provide an example to highlight the shortcomings possibly occurring when substituting the negative gradient direction in PGM with a generic gradient-related direction; in Section  \ref{sec:alg} we describe the proposed algorithmic framework, proving convergence to M-, B-, and P-stationary points in Section~\ref{sec:Mstat}, \ref{sec:Bstat} and \ref{sec:Pstat} respectively. We then show the results of computational experiments in Section~\ref{sec:experiments} and finally give concluding remarks in Section~\ref{sec:conclusions}.

\section{Preliminaries}
\label{sec:prelim}
Euclidean projections onto a closed, nonempty but nonconvex set $D\subset \mathbb{X}$ exist, although, differently from the convex case, they are not necessarily unique. The set-valued projection operator onto $D$ is therefore defined as $P_D:\mathbb{X}\rightrightarrows D$, where
$$P_D(x) = \argmin_{z\in D} \|z-x\|.$$
For all $x$, the set $P_D(x)$ is nonempty and compact; moreover, the corresponding distance (single-valued) function $d_D:\mathbb{X}\to\mathbb{R}_+$, where $d_D(x) = \min_{z\in D}\|z-x\|$, is continuous.  For all $x \in D$, $v \in \mathbb X$, and $y \in P_D(x-v)$, it is verified that (see, e.g.,~\cite[Prop.~2.1]{olikier2025projected})
\begin{align}
\label{eqn:propr_proj1}
\|y-x\| &\le 2 \|v\|; \\
\label{eqn:propr_proj2}
2\langle v, y-x \rangle &\le -\|y-x\|^2.
\end{align}

We now review the three stationarity notions for problem~\eqref{eqn:prob} considered in the paper. To this aim, we first need to recall some basic concepts from variational analysis. We refer the reader to~\cite[Ch.\ 6]{rockafellar1998variational},~\cite[Sec.\ 2]{olikier2025projected}, or~\cite[Ch.\ 1]{mordukhovich2018variational} for more details on the following concepts. We start with the notion of tangency.

\medskip
\begin{definition}
    A vector $v \in \mathbb{X}$ is  \textit{tangent} to $D$ at a point $x \in D$ if there exist sequences $\{x_k\}\subseteq D$ and $\{t_k\}\subseteq \mathbb{R}_+$ such that $$\lim_{k\to\infty}x_k = x, \qquad \lim_{k\to\infty}t_k=0,\qquad \text{and}\qquad \lim_{k\to\infty}\frac{x_k-x}{t_k} = v.$$ 
\end{definition}

The above definition allows us to introduce the following objects, which can be proved to be a closed cone and a closed convex cone, respectively.

\medskip
\begin{definition}
    Let $x\in D$ be any feasible point.
    \begin{itemize}
    \item The \textit{tangent cone} to $D$ at $x$, denoted by $T_{D}(x)$, is the set of all tangent vectors to $D$ at $x$.
    \item The \textit{regular (Fréchet) normal cone} to $D$ at $x$, denoted by $\widehat{\mathcal{N}}_D(x)$, is the set of vectors that form a nonacute angle with every tangent vector at the point, i.e., 
    $$\widehat{\mathcal{N}}_D(x)=\{v\in\mathbb{X}\mid \langle v,w\rangle\le 0\;\forall w\in T_D(x)\}.$$
    \end{itemize}
\end{definition}

Then, once the normal cone $\widehat{\mathcal{N}}_D(x)$ is well-defined, normality of vectors can also be characterized more mildly, in the limit.

\medskip
\begin{definition}
    A vector $v \in \mathbb{X}$ is  \textit{normal (in the general sense)} to $D$ at $x \in D$
    if there exist sequences $\{x_k\}\subseteq D$ and $\{v_k\}\subseteq \mathbb{X}$  such that $$\lim_{k\to\infty}x_k = x, \qquad \lim_{k\to\infty}v_k=v,\qquad \text{and}\qquad v_k\in\widehat{\mathcal{N}}_D(x_k)\;\forall\, k.$$ 
\end{definition}

From the above notion of normality, we immediately get a set which can be proved to be a closed cone and which somewhat represents a ``relaxation'' of the regular normal cone. 

\medskip
\begin{definition}
    Let $x\in D$ be a feasible point. 
    The \textit{general (Mordukhovich/limiting) normal cone} to $D$ at $x$, denoted by $\mathcal{N}_D(x)$, is the set of normal vectors to $D$ at $x$.
\end{definition}

\medskip
We finally have a series of concepts that are made accessible by the availability of the projection operator.

\medskip
\begin{definition}
\label{def:prox_normal}
A vector $v \in \mathbb{X}$ is \emph{proximal normal} to $D$ at $x \in D$ if there exists $\bar{\alpha}>0$ such that $x \in P_{D}(x+\bar{\alpha}v)$.    
\end{definition}

\medskip
\begin{remark}
\label{obs:equiv_prox}
The latter Definition~\ref{def:prox_normal} can be shown to 
\begin{enumerate}
\item[i)] be equivalent to $\bar{\alpha}\|v\|=d_D(x+\bar{\alpha} v)$; and \item[ii)] actually implies that $P_{D}(x+\alpha v) = \{x\}$ for all $\alpha\in (0,\bar{\alpha})$. 
\end{enumerate}
\end{remark}

\medskip
This leads to the definition of a final set that can be proved to be a closed convex cone.

\medskip
\begin{definition}
    Let $x\in D$ be any feasible point. 
    The \textit{proximal normal cone} to $D$ at $x$, denoted by $\widehat{\mathcal{N}}_D^P(x)$, is the set of proximal normal vectors to $D$ at $x$.
\end{definition}

\medskip
We can now relate formally the three notions of normal cone with each other.

\medskip
\begin{proposition}
    Let $x\in D$ be any feasible point. The following inclusions hold:
    \begin{equation}
        \label{eqn:inclusions}
        \widehat{\mathcal{N}}_D^P(x)\subseteq \widehat{\mathcal{N}}_D(x)\subseteq\mathcal{N}_D(x).
    \end{equation}
\end{proposition}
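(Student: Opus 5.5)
The two inclusions in \eqref{eqn:inclusions} are independent, and I would prove them in turn: the second is immediate from the definitions, while the first needs a short computation based on the projection property.

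\textbf{Second inclusion.} For $\widehat{\mathcal{N}}_D(x)\subseteq\mathcal{N}_D(x)$, take $v\in\widehat{\mathcal{N}}_D(x)$ and use the constant sequences $x_k=x$ and $v_k=v$. Then $x_k\to x$, $v_k\to v$, and $v_k\in\widehat{\mathcal{N}}_D(x_k)$ for every $k$, so $v$ is normal in the general sense.

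\textbf{First inclusion.} Let $v\in\widehat{\mathcal{N}}^P_D(x)$, so there is $\bar\alpha>0$ with $x\in P_D(x+\bar\alpha v)$. Because $x$ minimizes the distance to $x+\bar\alpha v$ over $D$, every $z\in D$ satisfies
\[
\|z-x-\bar\alpha v\|^2\ge \|\bar\alpha v\|^2 .
\]
Expanding the left-hand side gives
\[
\|z-x\|^2-2\bar\alpha\langle v,z-x\rangle\ge 0,
\qquad\text{that is,}\qquad
\langle v,z-x\rangle\le \frac{1}{2\bar\alpha}\|z-x\|^2 \quad\text{for all } z\in D .
\]

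Now take any $w\in T_D(x)$, with witnessing sequences $x_k\in D$, $x_k\to x$, $t_k\downarrow 0$, and $(x_k-x)/t_k\to w$. Apply the inequality with $z=x_k$ and divide by $t_k>0$:
\[
\Big\langle v,\frac{x_k-x}{t_k}\Big\rangle\le \frac{t_k}{2\bar\alpha}\Big\|\frac{x_k-x}{t_k}\Big\|^2 .
\]
As $k\to\infty$, the left side tends to $\langle v,w\rangle$. On the right, $\|(x_k-x)/t_k\|^2$ stays bounded because it converges to $\|w\|^2$, and $t_k\to0$, so the right side tends to $0$. Hence $\langle v,w\rangle\le 0$ for every $w\in T_D(x)$, which means $v\in\widehat{\mathcal{N}}_D(x)$.

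The only real step is the expansion of the projection inequality into the local quadratic bound $\langle v,z-x\rangle\le\frac{1}{2\bar\alpha}\|z-x\|^2$; the limiting argument afterwards is routine. This bound can also be read off directly from \eqref{eqn:propr_proj2} with the roles of $x$ and $y$ arranged suitably.
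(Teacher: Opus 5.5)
Your proof is correct and complete. Constant sequences give $\widehat{\mathcal{N}}_D(x)\subseteq\mathcal{N}_D(x)$. For the first inclusion, the proximal inequality $\langle v,z-x\rangle\le\frac{1}{2\bar\alpha}\|z-x\|^2$ for all $z\in D$, divided by $t_k$ along a tangent sequence, yields $\langle v,w\rangle\le 0$ for every $w\in T_D(x)$; only $t_k>0$ is needed here, not monotonicity.

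The paper states this proposition without proof and defers to the standard references (e.g., Rockafellar--Wets, Ch.~6), where essentially this argument appears. Your side remark also checks out: the quadratic bound follows from \eqref{eqn:propr_proj2} with base point $z$, vector $z-x-\bar\alpha v$, and projection $x$.
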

As aforementioned in the Introduction, the three normal cones give rise to three different concepts of stationarity (i.e., first order necessary conditions of local optimality) for problem~\eqref{eqn:prob}, so that a point $x\in D$ is

\smallskip
\begin{itemize}
    \item \textit{M-stationary} if $-\nabla f(x)\in \mathcal{N}_D(x)$;
    \item \textit{B-stationary} if $-\nabla f(x)\in \widehat{\mathcal{N}}_D(x)$;
    \item \textit{P-stationary} if $-\nabla f(x)\in \widehat{\mathcal{N}}^P_D(x)$.
\end{itemize}

\smallskip
By the inclusions in~\eqref{eqn:inclusions}, it is clear that P-stationarity implies B-stationarity which in turn implies M-stationarity. M-stationarity is equivalent to B-stationarity at a point $x \in D$ if and only if $D$ is Clarke regular at $x$~\cite[Def.\ 6.4]{rockafellar1998variational}, which is false in various relevant instances, as analyzed in~\cite[Sec.\ 7]{olikier2025projected}, making M-stationarity strictly weaker than B-stationarity in practice. By the same discussion, we get that the coincidence of B- and P-stationarity is instead satisfied in settings of interest; still an example where both inclusions in~\eqref{eqn:inclusions} are strict is provided.

\section{Projection along a Descent Direction can Induce Ascent Paths}
\label{sec:example}
The work of Olikier and Waldspurger on the convergence properties of PGM algorithms raises an additional research question~\cite{olikier2025projected}, which is rephrased here below for notation consistency with that of this manuscript:

\smallskip
\begin{center}
    \textit{``Can the convergence results proved for PGM be extended to an algorithm that uses other descent directions than the negative gradient? For example, a search direction at a point $x \in D$ that is not B-stationary could be a vector $v \notin \widehat{\mathcal{N}}_{D}^P(x)$ that satisfies $\langle\nabla f(x),v\rangle \le - c_1 \|\nabla f(x)\|^2$ and $\|{v}\| \le c_2 \|\nabla f(x)\|$ with $c_1, c_2>0$.''}
\end{center}

\medskip
In this section we show by a simple counterexample that the answer, at least for a vanilla extension of PGM using a direction as suggested above in place of the negative gradient, is negative: a method carrying out iterations of the form $x_{k+1} = P_D(x_k+t_kd_k)$ with $d_k$ gradient-related and $t_k$ identified by a backtracking procedure ensuring (possibly nonmonotone) sufficient decrease can get stuck at points that are not even M-stationary solutions. 
The intuition about this issue, depicted in Figure~\ref{fig:counterexample}, is formalized in the following example.

\begin{figure}
    \centering
    \includegraphics[width=0.75\linewidth]{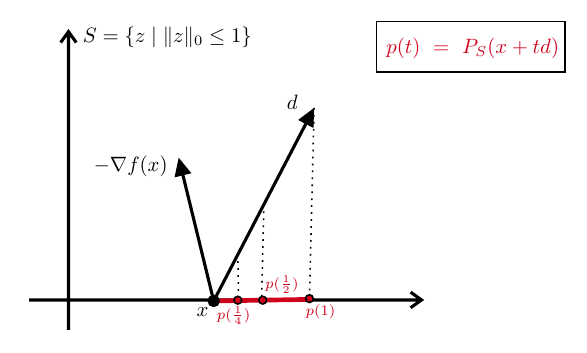}
    \caption{Direction $d$ is a descent direction at $x$ (forms an acute angle with negative gradient), but projected gradient path $p(t)$ generates an ascent direction, so that for sufficiently small $t$ the objective increases.}
    \label{fig:counterexample}
\end{figure}

\medskip
\begin{example}
\label{ex:sparsity}
    Let us consider the problem 
    \begin{equation}
        \label{eqn:counterexample}
        \min_{x\in\mathbb{R}^2}\; f(x) = \frac{x_1^2}{2}+x_2^2-3x_2\qquad \text{s.t. } \|x\|_0\le 1.
    \end{equation}
    For the class of sparsity-constrained optimization problems in $\mathbb{R}^n$, where $D = \{x\mid \|x\|_0\le s\}$, we have (see for references~\cite[Sec.\ 5.1]{kanzow2023inexact},~\cite[Sec.\ 3.3]{lammel2022nondegenerate},~\cite[Sec.\ 7.4]{olikier2023first}):

    \smallskip
    \begin{itemize}
        \item a point $\bar{x}$ is M-stationary if and only if there exists a subset of indices $J\subseteq\{1,\ldots,n\}$ such that 
        $$\begin{cases}
            |J| = s&\text{(} J \text{ identifies a set of } s \text{ variables)}\\
            i\in J\qquad\quad\;\;\, \forall \, i:\bar{x}_i\neq 0&\text{(}J\text{ contains all active variables at }\bar{x}\text{)}\\
            \nabla_i f(\bar{x}) = 0 \quad\forall\, i\in J&\text{(zero gradients w.r.t.\  variables in } J\text{)}
        \end{cases}$$ 
        (i.e., $J$ identifies a set of $s$ variables containing the support of $\bar{x}$) and
        \item B- and P-stationarity coincide and $\bar{x}\in D$ is B-stationary if and only if $$\begin{cases}
            \nabla f(\bar{x}) = 0 &\text{if } \|\bar{x}\|_0<s;\\
            \nabla_i f(\bar{x}) = 0 \;\forall\, i:\bar{x}_i\neq 0&\text{if }\|\bar{x}\|_0=s.
        \end{cases}$$
    \end{itemize}
    It is immediate to observe that M- and B-stationarity coincide at all points with full support, i.e., whenever $\|x\|_0=s$.
    
    Focusing back on problem~\eqref{eqn:counterexample}, let us consider point $\bar{x} = (1,0)^\top$. We have $\nabla f(x) = (x_1, 2x_2-3)^\top$, so the gradient at $\bar{x}$ is  $\nabla f(\bar{x}) = (1,-3)^\top$. The point $\bar{x}$ is clearly not stationary, as the derivative w.r.t.\ the active variable $x_1$ is nonzero. Now, let us consider direction $d=(1,3)^\top.$ This is a descent direction at $\bar{x}$, as $\nabla f(\bar{x})^\top d = -8$. 
    On the other hand, we have
    $$P_D(\bar{x}+td) = P_D ((1+t,3t)^\top)  = (1+t,0)^\top \qquad \forall\,t\in(0,1/2),$$
          and $$f((1+t,0)^\top) = \frac{(t+1)^2}{2}>\frac{1}{2}=f(\bar{x})\qquad \forall\, t>0.$$
          The projected path induced by the descent direction $d$ at $\bar{x}$ is thus an ascent one, and the algorithm would get stuck at the non-stationary point $\bar{x}$, with the backtracking procedure possibly trapped in an infinite loop.  
\end{example}

\medskip
The above example leads us to the need of devising a more structured approach in order to enable the inclusion of momentum terms within PGM frameworks for problem~\eqref{eqn:prob}.

\section{A Projection-based Method with Momentum}
\label{sec:alg}
As widely anticipated, the main goal of this paper consists in the design of a sound projected-type algorithm exploiting momentum terms for computational acceleration. In other words, we are interested in an algorithmic update of the form 
\begin{equation*}
    x_{k+1} \in P_D(x_k-\alpha_k \nabla f(x_k)+\beta_k(x_k-x_{k-1})), 
\end{equation*}
defined in such a way to avoid stall situations like the one described in the previous section.

To this aim, we devise a scheme that moves the tentative point to be projected by backtracking along a tailored curve, rather than a straight direction, until a suitable sufficient decrease condition is verified. 
The proposed update rule, specifically, is defined as
\begin{equation*}
    x_{k+1} = P_D(x_k+d_k(\alpha_k))
\end{equation*}
with 
\begin{equation*}
    d_k(\alpha_k) = -\alpha_k\nabla f(x_k)+\beta_\text{base}\big(\tfrac{\max\{\alpha_k-\tau,0\}}{\alpha_\text{base}-\tau}\big)^{a}(x_k-x_{k-1})
\end{equation*}

\smallskip\noindent
for values $a\ge 1$, $\tau\ge 0$, $\alpha_\text{base} > \tau$ and $\beta_\text{base}\ge0$. 
The value of $\alpha_k$ will then  be obtained by a backtracking procedure, started at $\alpha=\alpha_\text{base}$, that aims to satisfy the following sufficient decrease condition~\cite{kanzow2022convergence,de2023proximal}
\begin{equation}
    \label{eqn:suff_dec_cond}
    f(P_D(x_k+d_k(\alpha)))\le \mu_k-\frac{c}{2\alpha}\|P_D(x_k+d_k(\alpha))-x_k\|^2,
\end{equation}
where $c>0$ and $\mu_k\ge f(x_k)$ is a suitable (possibly nonmonotone) reference value.
The two nonmonotone decrease conditions that we will specifically consider in this work are:

\smallskip
\begin{enumerate}
    \item The \emph{max-rule} for some  $M \in \mathbb N$~\cite{grippo1986nonmonotone}:
\begin{equation}
\label{eqn:max_rule}
    \mu_k = \max\limits_{i \in \{\max\{0,k-M\},\dots,k\}}f(x_i);
\end{equation}
    \item The \emph{average-rule} for some $ p \in (0,1]$~\cite{zhang2004nonmonotone}:
\begin{equation}
\label{eqn:average_rule}
    \mu_k = (1-p)\mu_{k-1} + p f(x_k).
\end{equation}
\end{enumerate}

The logic associated with the proposed update mechanism, illustrated in Figures~\ref{fig:curve1} and \ref{fig:curve2}, then works as follows:

\smallskip
\begin{itemize}
    \item The search for the update starts at $\alpha=\alpha_\text{base}$, i.e., the first trial point corresponds to the update $P_D(x_k-\alpha_\text{base}\nabla f(x_k)+\beta_\text{base} (x_k-x_{k-1}))$ that we ideally would like to take.
    \item The sufficient decrease condition~\eqref{eqn:suff_dec_cond} is checked at each tentative point; backtracking steps are performed as long as the condition is not met. 
    \item As backtracking proceeds, we progressively realign with the path induced by using the negative gradient direction; in particular, we have two cases:

    \smallskip
    \begin{enumerate}[a)]
        \item if $\tau = 0$, we need $a\ge 2$; in this case, the momentum term vanishes faster than the negative gradient in $d_k(\alpha)$ as $\alpha$ tends to zero, and tentative updates revert to the ones of PGM asymptotically (Figure~\ref{fig:curve1}); we shall note that this type of approach borrows the idea discussed in~\cite{donnini2026efficient,anonymous2026safely}, and is also similar to the mechanism employed within PANOC~\cite{stella2017simple,de2022proximal} to handle general directions;
        \item if $\tau>0$, we exactly reduce to PGM steps for sufficiently small step sizes (Figure~\ref{fig:curve2}); in this case, any value of $a\ge 1$ is acceptable.
    \end{enumerate}
\end{itemize}

\smallskip
The proposed algorithmic framework is formalized in detail in Algorithm~\ref{alg:ggmm}, where the backtracking procedure is stated explicitly as an inner loop. It is worth remarking that the sufficient decrease condition~\eqref{eqn:suff_dec_cond}, which we will later prove to be satisfiable with a finite number of backtracks in the setups considered in this manuscript, is stricter than the conditions used both in~\cite{jia2023augmented} and~\cite{olikier2025projected}. However, it enforces the property that is actually crucial to be used in the convergence analysis and that is retrieved also in the two aforementioned works.

For a more general and flexible framework, we allow in Algorithm~\ref{alg:ggmm} the values of $\alpha_\text{base}$ and $\beta_\text{base}$ to change across iterations considered (we will denote these starting values by $\alpha_{k0}$ and  $\beta_{k0}$). Some options, like the target stationarity conditions or the sufficient decrease safeguard, are left flexible for generality; we will then specialize the convergence analysis for specific choices of these options and also depending on the setup of algorithmic parameters.

In summary, in the next sections we are going to prove that

\smallskip
\begin{itemize}
    \item for the case $\tau=0$, where the update never exactly collapses to PGM, we recover convergence to M-stationarity with both max-type and average-type nonmonotone safeguard rules without any smoothness assumption on $\nabla f$; 
    we shall observe that, for instance, convergence for PANOC+ in this scenario has not been proven;
    \item if we set $\tau>0$, we recover the strong convergence properties of PGM, namely, convergence to B-stationary points without smoothness assumptions and convergence to P-stationary points under local smoothness of $\nabla f$; the result is again proved for both types of nonmonotone conditions.
\end{itemize}

\begin{algorithm2e}[htb]
    \SetKwInOut{Input}{input}
	\SetAlgoLined
    \Input{$x_0 \in D, 0 \le \tau < \alpha_{\min}\le\alpha_{\max}<\infty, \beta_{\max}\ge 0, c \in (0,1), \delta \in (0,1), a \ge 1$}
    $k \gets 0$\;
    $x_{-1} \gets x_0$\;
	\While{\label{line:while_start} $x_k$ is not (M/B/P)-stationary}{
        Choose $\alpha_{k0} \in [\alpha_{\min},\alpha_{\max}]$ and $\beta_{k0}\in [0,\beta_{\max}]$\; \label{line:initialparameters}
        Set $\mu_k\ge f(x_k)$\;  \label{line:muk}
        Choose $y_{k0}\in P_D(x_k-\alpha_{k0} \nabla f(x_k) + \beta_{k0} (x_k-x_{k-1}))$\;
        $j \gets 0$\;
		\While
		{
			\label{line:linesearch_start}%
			$
			f(y_{kj}) > \mu_k - \frac{c}{2\alpha_{kj}}\|y_{kj}-x_k\|^2$
		}
		{
        $j \gets j+1$\;
        $\alpha_{kj} \gets  \delta^j \alpha_{k0}$\;
        $\beta_{kj} \gets \beta_{k0}\big(\frac{\max\{\alpha_{kj}-\tau,0\}}{\alpha_{k0}-\tau}\big)^{a}$\;
        Choose $y_{kj}\in P_D(x_k-\alpha_{kj} \nabla f(x_k) + \beta_{kj} (x_k-x_{k-1}))$\;
		}
        \label{line:linesearch_end}
        $\alpha_k \gets \alpha_{kj}$\;
        $\beta_k \gets \beta_{kj}$\;
		$x_{k+1} \gets y_{kj}$\;
        $k \gets k+1$\;
	}
    \label{line:while_end}
	\caption{\texttt{Geometric Gradient Method with Momentum}}
	\label{alg:ggmm}
\end{algorithm2e}

\begin{figure}[htbp]
    \centering
    \includegraphics[width=0.9\linewidth]{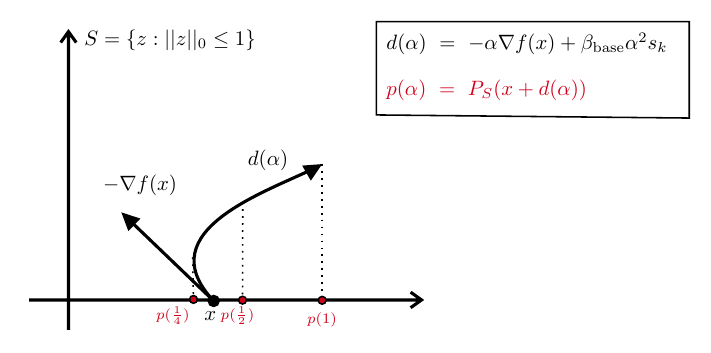}
    \caption{Search path for backtracking induced by the proposed update rule when $\tau=0$, $a=2$ and $\alpha_\text{base} = 1$.}
    \label{fig:curve1}
\end{figure}

\begin{figure}[htbp]
    \centering
    \includegraphics[width=0.9\linewidth]{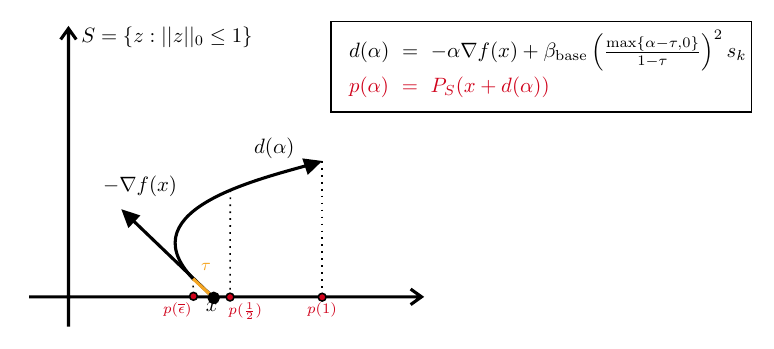}
    \caption{Search path for backtracking induced by the proposed update rule when $\tau>0$, $a=2$ and $\alpha_\text{base} = 1$.}
    \label{fig:curve2}
\end{figure}

\subsection{Convergence to M-stationary points for $\tau=0$}
\label{sec:Mstat}
In this section, we will carry out the analysis of Algorithm~\ref{alg:ggmm} under the assumption that $\tau$ is set to 0 and that $a\ge2$; we will also assume that, in the condition at Line~\ref{line:while_start}, M-stationarity is considered. 

For every iteration $k$ and $j$ of the outer and inner while loop, respectively, the parameter $\beta_{kj}$ can be written as
$$\beta_{kj} = \beta_{k0}\big(\frac{\alpha_{kj}}{\alpha_{k0}}\big)^{a} = \beta_{k0}\delta^{aj},$$
which, for a given $k$ and for $j \to \infty$, tends to zero \textit{faster} than $\alpha_{kj}=\alpha_{k0}\delta^j$. Indeed, as $a\ge2$,
\begin{equation}
    \label{eqn:lim_ratio}
    \lim_{j \to \infty}\frac{\beta_{kj}}{\alpha_{kj}}=\lim_{j \to \infty} \frac{\beta_{k0}}{\alpha_{k0}}\delta^{j(a-1)} = 0.
\end{equation}
Let us now consider the quadratic function $Q_{kj}:\mathbb X \to \mathbb R$ defined as

\begin{equation}
\label{eqn:Qkj}
Q_{kj}(y) :=\langle \nabla f(x_k) - \frac{\beta_{kj}}{\alpha_{kj}}(x_k-x_{k-1}), y-x_k \rangle + \frac{1}{2\alpha_{kj}}\| y - x_k\|^2,
\end{equation}
where $x_k\in D$, and $\alpha_{kj}, \beta_{kj}\in \mathbb R$ are computed according to the instructions of the algorithm. Then, 
\begin{equation}
\label{eqn:Q_cost}
Q_{kj}(x_k)=0.
\end{equation}
Moreover, it is straightforward to show that selecting
$$y_{kj} \in P_D(x_k - \alpha_{kj} \nabla f(x_k) + \beta_{kj} (x_k-x_{k-1}))$$
is completely equivalent to choosing
$$y_{kj} \in \arg\min_{y \in D} Q_{kj}(y).$$
It follows that, for every $k$ and $j$, $y_{kj}$ is an M-stationary point for the problem
$\min_{y \in D}Q_{kj}(y)$,
i.e.,
\begin{equation}
\label{eqn:M-subprob}
-\nabla f(x_k)-\frac{1}{\alpha_{kj}}(y_{kj}-x_k)+\frac{\beta_{kj}}{\alpha_{kj}}(x_k-x_{k-1}) \in \mathcal N_D(y_{kj}).
\end{equation}

We are now ready to state a first result, concerning the correctness of the backtracking procedure defined by the  inner loop at Lines~\ref{line:linesearch_start}--\ref{line:linesearch_end} of Algorithm~\ref{alg:ggmm}.

\medskip
\begin{proposition}
\label{prop:armijo}
Let $a\ge2$ and $\tau=0$. Assume that $f$ is continuously differentiable. Fix an outer iteration $k$ of Algorithm~\ref{alg:ggmm} and suppose that $-\nabla f(x_k) \not \in \mathcal N_D(x_k)$, i.e., $x_k$ is not an M-stationary point of problem~\eqref{eqn:prob}. Then, there exists $\bar j\in \mathbb N$ such that
$$f(y_{k \bar j}) \le \mu_k - \frac{c}{2\alpha_{k \bar j}}\|y_{k \bar j}-x_k\|^2.$$
\end{proposition}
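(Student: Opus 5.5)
We argue by contradiction: suppose the sufficient decrease condition fails for every $j\in\mathbb N$, so that, since $\mu_k\ge f(x_k)$,
\begin{equation*}
f(y_{kj}) > f(x_k) - \frac{c}{2\alpha_{kj}}\|y_{kj}-x_k\|^2 \qquad \forall\, j.
\end{equation*}
Write $v_{kj} = \alpha_{kj}\nabla f(x_k) - \beta_{kj}(x_k-x_{k-1})$, so that $y_{kj}\in P_D(x_k - v_{kj})$. Then \eqref{eqn:propr_proj1} gives $\|y_{kj}-x_k\|\le 2\|v_{kj}\|\to 0$ as $j\to\infty$, hence $y_{kj}\to x_k$. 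From \eqref{eqn:propr_proj2} we get $2\langle v_{kj}, y_{kj}-x_k\rangle\le -\|y_{kj}-x_k\|^2$. Dividing by $2\alpha_{kj}$ yields
\begin{equation*}
\langle \nabla f(x_k), y_{kj}-x_k\rangle \le -\frac{1}{2\alpha_{kj}}\|y_{kj}-x_k\|^2 + \frac{\beta_{kj}}{\alpha_{kj}}\langle x_k-x_{k-1}, y_{kj}-x_k\rangle .
\end{equation*}
This is equivalent to $Q_{kj}(y_{kj})\le Q_{kj}(x_k)=0$ by \eqref{eqn:Q_cost}.

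Next we apply the mean value theorem: $f(y_{kj}) = f(x_k) + \langle \nabla f(\xi_{kj}), y_{kj}-x_k\rangle$ with $\xi_{kj}\to x_k$. Combining this with the failed test and the inequality above, and writing $s_{kj}=\|y_{kj}-x_k\|$, we obtain
\begin{equation*}
\frac{1-c}{2\alpha_{kj}} s_{kj}^2 < \langle \nabla f(\xi_{kj})-\nabla f(x_k), y_{kj}-x_k\rangle + \frac{\beta_{kj}}{\alpha_{kj}}\|x_k-x_{k-1}\|\, s_{kj}.
\end{equation*}
First, $s_{kj}>0$: if $y_{kj}=x_k$, the test would read $f(x_k)>f(x_k)$, which is impossible. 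Dividing by $s_{kj}$ gives
\begin{equation*}
\frac{s_{kj}}{\alpha_{kj}} < \frac{2}{1-c}\Bigl(\|\nabla f(\xi_{kj})-\nabla f(x_k)\| + \frac{\beta_{kj}}{\alpha_{kj}}\|x_k-x_{k-1}\|\Bigr).
\end{equation*}
By continuity of $\nabla f$ and \eqref{eqn:lim_ratio}, the right-hand side tends to $0$, so $(y_{kj}-x_k)/\alpha_{kj}\to 0$. This step is where the assumption $a\ge 2$ is used, and it is the crux of the proof: it guarantees that the momentum contribution, measured relative to $\alpha_{kj}$, vanishes.

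Finally, we pass to the limit in \eqref{eqn:M-subprob}. The vectors $w_{kj} := -\nabla f(x_k) - \frac{1}{\alpha_{kj}}(y_{kj}-x_k) + \frac{\beta_{kj}}{\alpha_{kj}}(x_k-x_{k-1})$ belong to $\mathcal N_D(y_{kj})$, with $y_{kj}\to x_k$ and $w_{kj}\to -\nabla f(x_k)$. The limiting normal cone is outer semicontinuous: each element of $\mathcal N_D(y_{kj})$ is itself a limit of regular normals at nearby points, so a diagonal argument yields sequences fitting the definition of general normal. Hence $-\nabla f(x_k)\in\mathcal N_D(x_k)$, contradicting the assumption that $x_k$ is not M-stationary. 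The only delicate point is this closedness of the graph of $\mathcal N_D$, which we will state via the diagonal argument, or alternatively cite from \cite[Prop.~6.6]{rockafellar1998variational}.
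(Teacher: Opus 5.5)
Your proof is correct and uses the same ingredients as the paper's: $y_{kj}\to x_k$, the projection descent inequality $Q_{kj}(y_{kj})\le 0$, a first-order expansion of $f$ combined with $\beta_{kj}/\alpha_{kj}\to 0$, and outer semicontinuity of $\mathcal N_D$ applied to \eqref{eqn:M-subprob}. The difference is only organizational: the paper splits into the cases $\|y_{kj}-x_k\|/\alpha_{kj}\to 0$ versus $\limsup>0$ and shows that the second case forces the test to hold, whereas you derive $\|y_{kj}-x_k\|/\alpha_{kj}\to 0$ directly from the violated test via the mean value theorem (the contrapositive of the paper's second case, in the style of its proof of Proposition~\ref{prop:conv_rapporto}), which removes the case split and the subsequence extraction.
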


\begin{proof}
Suppose, by contradiction, that the loop at Lines~\ref{line:linesearch_start}--\ref{line:linesearch_end} does not terminate, i.e., for all $j \in \mathbb N$,
\begin{equation}
\label{eqn:assurdo_}
f(y_{kj}) > \mu_k - \frac{c}{2\alpha_{kj}}\|y_{kj}-x_k\|^2.
\end{equation}
Recall that $y_{kj} \in \arg\min_{y\in D}Q_{kj}(y)$.	Hence, as $ x_k \in D $, $Q_{kj}(y_{kj}) \le Q_{kj}(x_k)$, i.e., recalling~\eqref{eqn:Qkj}--\eqref{eqn:Q_cost},
\begin{equation}
\label{eqn:dis_quad}
\langle \nabla f(x_k) - \frac{\beta_{kj}}{\alpha_{kj}}(x_k-x_{k-1}), y_{kj}-x_k \rangle + \frac{1}{2\alpha_{kj}}\| y_{kj} - x_k\|^2 \le 0.
\end{equation}
Rearranging, applying the Cauchy--Schwarz and the triangle inequalities and dividing by $\|y_{kj}-x_k\|$, we therefore get 
\begin{align*}	
\frac{1}{2\alpha_{kj}}\| y_{kj} - x_k\| & \le \|  \nabla f(x_k) - \frac{\beta_{kj}}{\alpha_{kj}}(x_k-x_{k-1}) \|  \le \|  \nabla f(x_k) \| + \frac{\beta_{k0}}{\alpha_{k0}} \delta^{j(a-1)} \|x_k - x_{k-1}\| \\
&\le  \|  \nabla f(x_k) \| + \frac{\beta_{\max}}{\alpha_{\min}} \|x_k - x_{k-1}\|.
\end{align*}
Taking the limit for $j\to\infty$, noting that $\lim_{j \to \infty}{\alpha_{kj}} = 0$ and that the rightmost term in the above chain of inequalities is a constant, we get that $\lim_{j \to \infty} y_{kj} = x_k$.
Now, we distinguish two cases:

\smallskip
\begin{enumerate}
\item
The limit $\lim_{j \to \infty} \frac{\|y_{kj}-x_k\|}{\alpha_{kj}} = 0$ holds. Then, taking the limits in~\eqref{eqn:M-subprob} and recalling~\eqref{eqn:lim_ratio}, we immediately obtain, by the outer semicontinuity of the general normal cone, that
$$-\nabla f(x_k)\in \mathcal N_D(x_k).$$
This contradicts the hypothesis that $-\nabla f(x_k)\not\in \mathcal N_D(x_k)$.

\item We have $\limsup_{j \to \infty} \frac{\|y_{kj}-x_k\|}{\alpha_{kj}} > 0$. Then, there exists a constant $\rho>0$ and an infinite subset $J \subset \mathbb N$ such that 
\begin{equation}
    \label{eqn:rewrite_limsup}
    \frac{\|y_{kj}-x_k\|}{\alpha_{kj}} \ge \rho \quad \forall\,j \in J.
\end{equation}
From the Taylor expansion,
\begin{equation}
\label{eqn:taylor}
f(y_{kj}) - f(x_k)  = \langle \nabla f(x_k), y_{kj}-x_k \rangle  + o(\|y_{kj}-x_k\|).
\end{equation}
We can now observe that there exists $j_1 \in\mathbb N$ such that for all $j \ge j_1$, 
$$o(\|y_{kj}-x_k\|) \le  \frac{(1-c)\rho}{4}\|y_{kj}-x_k\|.$$
Then, also using~\eqref{eqn:rewrite_limsup}, for all $j \in J$, $j \ge j_1$, 
\begin{equation}
\label{eqn:bound1}
o(\|y_{kj}-x_k\|) \le \frac{1-c}{4\alpha_{kj}}\|y_{kj}-x_k\|^2.
\end{equation}
Moreover, $j_2 \in \mathbb N$ exists such that for all $j \ge j_2$, 
$$\frac{\beta_{kj}}{\alpha_{kj}} = \frac{\beta_{k0}}{\alpha_{k0}}\delta^{j(a-1)} \le \frac{(1-c)\rho}{4\|x_k-x_{k-1}\|}.$$
Then, exploiting the above inequality together with Cauchy--Schwarz and~\eqref{eqn:rewrite_limsup}, we get for all $j \in J, j \ge j_2$,
\begin{align*}
\frac{\beta_{kj}}{\alpha_{kj}}\langle x_k-x_{k-1}, y_{kj}-x_k \rangle &\le  \frac{\beta_{kj}}{\alpha_{kj}} \|x_k-x_{k-1}\| \|y_{kj}-x_k\| \\
& \le \frac{(1-c)\rho}{4}\|y_{kj}-x_k\| \le \frac{1-c}{4\alpha_{kj}}\|y_{kj}-x_k\|^2.
\end{align*}
Recalling~\eqref{eqn:dis_quad}, we thus get
\begin{equation}
\label{eqn:bound2}
\begin{aligned}
 \langle \nabla f(x_k), y_{kj}-x_k \rangle  &\le \frac{\beta_{kj}}{\alpha_{kj}}\langle x_k-x_{k-1}, y_{kj}-x_k \rangle -\frac{1}{2\alpha_{kj}}\| y_{kj} - x_k\|^2 \\
&\le  \frac{1-c}{4\alpha_{kj}}\|y_{kj}-x_k\|^2 -\frac{1}{2\alpha_{kj}}\| y_{kj} - x_k\|^2 \\&= -\frac{1+c}{4\alpha_{kj}}\|y_{kj}-x_k\|^2.
\end{aligned}
\end{equation}
Combining~\eqref{eqn:taylor},~\eqref{eqn:bound1} and~\eqref{eqn:bound2}, and recalling that $\mu_k\ge f(x_k)$, for all $j \in J$, $j \ge \max\{j_1,j_2\}$ we have
\begin{align*}
  f(y_{kj}) - \mu_k&\le f(y_{kj}) - f(x_k) \\&\le  -\frac{1+c}{4\alpha_{kj}}\|y_{kj}-x_k\|^2 + \frac{1-c}{4\alpha_{kj}}\|y_{kj}-x_k\|^2 \\&=   -\frac{c}{2\alpha_{kj}}\|y_{kj}-x_k\|^2,
\end{align*}
which contradicts~\eqref{eqn:assurdo_}. \qedhere
\end{enumerate}
\end{proof}

\medskip
We now turn to the study of the properties of the sequence $\{x_k\}$ associated with the outer algorithmic loop.
The first property, stated as Proposition~\ref{prop:conv_diff}, is valid for every $a \ge 1$ and $\tau \ge 0$; hence, it  will be useful also for the analysis presented in the next subsection. This result, whose proof is based on those of~\cite[Prop.~3.2]{jia2023augmented} and~\cite[Prop.~5.4]{olikier2025projected} and provided in Appendix \ref{sec:appendix} for completeness, ensures that the difference between two consecutive iterates tends to zero if either the max or the average nonmonotone decrease rule is considered.
Specifically, in the following we will require that:

\medskip
\begin{assumption}
\label{ass:rule}
One of the following is true:
\smallskip
\begin{enumerate}
\item The \emph{max-rule}~\eqref{eqn:max_rule} is used in Line~\ref{line:muk} of Algorithm~\ref{alg:ggmm}; $f$ is bounded from below and uniformly continuous on the sublevel set 
$$\mathcal L_0 = \{x \in D \mid f(x) \le f(x_0)\}.$$ 
\item The \emph{average-rule}~\eqref{eqn:average_rule} is used in Line~\ref{line:muk} of Algorithm~\ref{alg:ggmm}.
\end{enumerate}
\end{assumption}

\medskip
\begin{proposition}
\label{prop:conv_diff}
Assume that $f$ is continuously differentiable, and that every inner loop (Lines~\ref{line:linesearch_start}--\ref{line:linesearch_end}) of Algorithm~\ref{alg:ggmm} always terminates, while the outer loop (Lines~\ref{line:while_start}--\ref{line:while_end}) never terminates.  Let $\{x_k\}$ be the corresponding infinite sequence of iterates. Assume that the sequence $\{x_k\}$ admits an accumulation point. Further assume Assumption~\ref{ass:rule}.
Then, 
\begin{equation}
\label{eqn:conv_diff}
\lim_{k \to \infty}\|x_{k+1}-x_k\|=0.
\end{equation}
\end{proposition}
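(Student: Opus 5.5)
The plan is to follow the classical nonmonotone line-search arguments of Grippo--Lampariello--Lucidi and Zhang--Hager, as adapted in \cite[Prop.~3.2]{jia2023augmented} and \cite[Prop.~5.4]{olikier2025projected}. The only ingredient needed from the algorithm is the accepted sufficient decrease
\begin{equation*}
f(x_{k+1}) \le \mu_k - \frac{c}{2\alpha_k}\|x_{k+1}-x_k\|^2 \le \mu_k - \frac{c}{2\alpha_{\max}}\|x_{k+1}-x_k\|^2,
\end{equation*}
where the second inequality uses $\alpha_k\le\alpha_{k0}\le\alpha_{\max}$. The specific curve-type direction plays no role here. The proof splits according to the two cases of Assumption~\ref{ass:rule}.

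\textbf{Average rule.} By induction, $f(x_k)\le\mu_k$. Indeed, $\mu_0=f(x_0)$, and if $f(x_k)\le\mu_k$, then $f(x_{k+1})\le\mu_k$. This gives $\mu_{k+1}=(1-p)\mu_k+pf(x_{k+1})\ge f(x_{k+1})$. It also gives
\begin{equation*}
\mu_{k+1}\le \mu_k - \frac{pc}{2\alpha_{\max}}\|x_{k+1}-x_k\|^2,
\end{equation*}
so $\{\mu_k\}$ is nonincreasing. Lower boundedness is not assumed here, so I would use the accumulation point instead. Take $x_{k}\to\bar x$ along a subsequence $K$. Continuity of $f$ gives $f(x_k)\to f(\bar x)$ on $K$. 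Since $\mu_k\ge f(x_k)$ and $\{\mu_k\}$ is monotone, $\{\mu_k\}$ is bounded below by $f(\bar x)$ and hence converges. Telescoping then yields $\sum_k\|x_{k+1}-x_k\|^2<\infty$, and the conclusion follows.

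\textbf{Max rule.} Let $\ell(k)$ be an index in $\{\max\{0,k-M\},\dots,k\}$ attaining $\mu_k=f(x_{\ell(k)})$. The standard argument shows that $\{\mu_k\}$ is nonincreasing, so every iterate lies in $\mathcal L_0$. Since $f$ is bounded below, $\mu_k\to\mu^*$. Applying the decrease at iteration $\ell(k)-1$ gives
\begin{equation*}
\mu_k\le \mu_{\ell(k)-1}-\frac{c}{2\alpha_{\max}}\|x_{\ell(k)}-x_{\ell(k)-1}\|^2,
\end{equation*}
hence $\|x_{\ell(k)}-x_{\ell(k)-1}\|\to0$. Uniform continuity of $f$ on $\mathcal L_0$ then gives $f(x_{\ell(k)-1})\to\mu^*$. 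Next I induct on $i=1,\dots,M+1$ to show that $\|x_{\ell(k)-i+1}-x_{\ell(k)-i}\|\to0$ and $f(x_{\ell(k)-i})\to\mu^*$. As in GLL, this uses $\ell(k+M+1)$ to cover every index $k$ within $M+1$ steps, and concludes $f(x_k)\to\mu^*$. Finally, the inequality $f(x_{k+1})\le\mu_k-\frac{c}{2\alpha_{\max}}\|x_{k+1}-x_k\|^2$, with both $f(x_{k+1})$ and $\mu_k$ tending to $\mu^*$, forces $\|x_{k+1}-x_k\|\to0$.

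\textbf{Main obstacle.} The delicate step is the induction in the max-rule case. Uniform continuity is needed exactly there, to pass function values across vanishing steps between iterates that need not converge. The points $x_{\ell(k)-i}$ lie in $\mathcal L_0$, so the assumption applies. I would write that induction carefully and treat the average-rule case as routine.
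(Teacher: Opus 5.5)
Your proposal is correct and follows essentially the same route as the paper's appendix proof. For the max rule you use the GLL-style induction with uniform continuity on $\mathcal L_0$, and for the average rule you get convergence of $\{\mu_k\}$ from the accumulation point. The differences are minor: you prove the average-rule case directly by telescoping $\mu_{k+1}\le\mu_k-\frac{pc}{2\alpha_{\max}}\|x_{k+1}-x_k\|^2$ instead of citing \cite[Prop.~4.7]{olikier2025projected}, and you close the max-rule case via $f(x_k)\to\mu^*$ and the sufficient decrease inequality rather than the paper's contradiction argument on $\omega_{k-M-1}$.
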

\begin{proof}
    See Appendix \ref{sec:appendix}.
\end{proof}

\medskip
Now, we can further characterize the behavior of $\|x_{k+1}-x_k\|$, putting it in relationship with the behavior of $\alpha_k$, along convergent subsequences.

\medskip
\begin{proposition}
\label{prop:conv_rapporto}
Let $a\ge2,\tau=0$. Assume that $f$ is continuously differentiable, and that the outer loop (Lines~\ref{line:while_start}--\ref{line:while_end}) of Algorithm~\ref{alg:ggmm} never terminates. Further assume Assumption~\ref{ass:rule}.  Let $\bar x \in D$ be an accumulation point of the sequence $\{x_k\}$ (if any), i.e., $\{x_k\}_{k \in K}$ converges to $\bar x$ along a subsequence $K \subset \mathbb N$. Then, 
\begin{equation}
\label{eqn:tesi_conv_rapporto}
\lim_{k \in K, k \to \infty}\frac{\|x_{k+1}-x_k\|}{\alpha_k}=0.
\end{equation}
\end{proposition}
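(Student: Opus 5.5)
The plan is to argue by contradiction: assume the ratio does not vanish along $K$, and show that the \emph{rejected} trial point of the last backtracking step would then have satisfied the sufficient decrease condition~\eqref{eqn:suff_dec_cond}. This repeats case 2 of Proposition~\ref{prop:armijo}, but now uniformly along a subsequence.

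\textbf{Setup.} Since the outer loop never terminates, no $x_k$ is M-stationary. Hence by Proposition~\ref{prop:armijo} every inner loop terminates, and Proposition~\ref{prop:conv_diff} gives $\|x_{k+1}-x_k\|\to0$ (in particular $\|x_k-x_{k-1}\|\to 0$). Suppose there exist $\rho>0$ and an infinite $K'\subseteq K$ with $\|x_{k+1}-x_k\|/\alpha_k\ge\rho$ for $k\in K'$. Then $\alpha_k\le \|x_{k+1}-x_k\|/\rho\to0$ on $K'$. Because $\alpha_{k0}\ge\alpha_{\min}$, the final inner index $j_k$ satisfies $j_k\ge1$ eventually and $j_k\to\infty$. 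Set $\hat\alpha_k=\alpha_k/\delta$, let $\hat\beta_k=\beta_{k,j_k-1}$, and let $\hat y_k=y_{k,j_k-1}$ be the rejected point, so that
\[
f(\hat y_k)>\mu_k-\tfrac{c}{2\hat\alpha_k}\|\hat y_k-x_k\|^2 .
\]
Since $j_k\to\infty$, both ratios $\beta_k/\alpha_k$ and $\hat\beta_k/\hat\alpha_k$ are at most $\frac{\beta_{\max}}{\alpha_{\min}}\delta^{(j_k-1)(a-1)}\to0$. Moreover, by the bound derived in the proof of Proposition~\ref{prop:armijo}, we have $\|\hat y_k-x_k\|\le 2\hat\alpha_k\big(\|\nabla f(x_k)\|+\tfrac{\beta_{\max}}{\alpha_{\min}}\|x_k-x_{k-1}\|\big)$. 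The right-hand side is $O(\hat\alpha_k)\to0$ because $x_k\to\bar x$ on $K$. Hence $\hat y_k\to\bar x$ as well.

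\textbf{Key step: transfer the lower bound from $y_k=x_{k+1}$ to $\hat y_k$.} Write $m_k=x_k-x_{k-1}$, and recall that $y_k$ minimizes $Q_{k,j_k}$ over $D$ and $\hat y_k$ minimizes $Q_{k,j_k-1}$ over $D$.
\begin{itemize}
\item Write the two optimality inequalities $Q_{k,j_k}(y_k)\le Q_{k,j_k}(\hat y_k)$ and $Q_{k,j_k-1}(\hat y_k)\le Q_{k,j_k-1}(y_k)$ and add them. The gradient terms cancel, leaving
\[
\Big(\tfrac{1}{2\alpha_k}-\tfrac{1}{2\hat\alpha_k}\Big)\big(\|y_k-x_k\|^2-\|\hat y_k-x_k\|^2\big)\le \Big(\tfrac{\beta_k}{\alpha_k}-\tfrac{\hat\beta_k}{\hat\alpha_k}\Big)\langle m_k,y_k-\hat y_k\rangle .
\]
\item Multiply by $2\alpha_k/(1-\delta)$. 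The right-hand side becomes $O\big(\alpha_k\,\epsilon_k\|m_k\|\,\|y_k-\hat y_k\|\big)$ with $\epsilon_k\to0$, and $\|y_k-\hat y_k\|=O(\hat\alpha_k)$. So it is $o(\alpha_k^2)$.
\item Since $\|y_k-x_k\|^2\ge\rho^2\alpha_k^2$, this yields $\|\hat y_k-x_k\|\ge(\rho/2)\alpha_k$ for large $k\in K'$, i.e. $\|\hat y_k-x_k\|/\hat\alpha_k\ge\delta\rho/2$.
\end{itemize}

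\textbf{Conclusion.} Now redo case 2 of Proposition~\ref{prop:armijo} at $(x_k,\hat y_k)$, with two changes.
\begin{itemize}
\item Replace the pointwise Taylor remainder by the mean value theorem: $f(\hat y_k)-f(x_k)=\langle\nabla f(\xi_k),\hat y_k-x_k\rangle$ with $\xi_k\to\bar x$. Continuity of $\nabla f$ at $\bar x$ gives $\|\nabla f(\xi_k)-\nabla f(x_k)\|\to0$, so the remainder is at most $\tfrac{1-c}{4\hat\alpha_k}\|\hat y_k-x_k\|^2$ eventually, using the ratio lower bound $\delta\rho/2$.
\item The momentum contribution $\tfrac{\hat\beta_k}{\hat\alpha_k}\langle m_k,\hat y_k-x_k\rangle$ is bounded the same way, since $\hat\beta_k/\hat\alpha_k\to0$ and $\|m_k\|$ is bounded (indeed it tends to zero).
\end{itemize}
Combining these with~\eqref{eqn:dis_quad} and $\mu_k\ge f(x_k)$ gives $f(\hat y_k)\le\mu_k-\tfrac{c}{2\hat\alpha_k}\|\hat y_k-x_k\|^2$, contradicting the rejection of $\hat y_k$.

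\textbf{Main obstacle.} I expect the transfer step to be the delicate one. Without momentum, $\|y(\alpha)-x\|/\alpha$ is monotone in $\alpha$, but here the linear coefficient of $Q$ changes between the two trial steps. The argument therefore relies on the ratio $\beta/\alpha$ being uniformly small, and on $\|y_k-\hat y_k\|=O(\alpha_k)$, to absorb the cross term.
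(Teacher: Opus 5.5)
Your proof is correct and uses the same two ingredients as the paper: you sum the optimality inequalities of $Q_{k,j_k}$ and $Q_{k,j_k-1}$ to compare $\|x_{k+1}-x_k\|/\alpha_k$ with the ratio at the last rejected trial point, and you run a mean-value/rejection argument at that point. The only difference in structure is the order, since the paper first shows $\|\hat x_{k+1}-x_k\|/\hat\alpha_k\to0$ and then transfers this to $x_{k+1}$, whereas you transfer the lower bound $\rho$ first and then contradict the rejection; your cross-term estimate via $\|y_k-\hat y_k\|=O(\alpha_k)$ is also slightly sharper than the paper's bound $\hat\beta_k/\hat\alpha_k^2\le\beta_{\max}/\alpha_{\min}^2$, which is where the paper uses $a\ge2$ in this step.
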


\medskip
\begin{proof}
Let $ \bar x \in D $ be an arbitrary accumulation point of $\{x_k\}$. Let $K \subset \mathbb N$ be an infinite subset such that the sequence $\{x_k\}_{k \in K}$ converges to $\bar x$. 
Suppose, by contradiction, that~\eqref{eqn:tesi_conv_rapporto} does not hold. Then, there exists $\rho >0$ and an infinite subset $K' \subset K$ such that
\begin{equation}
\label{eqn:assurdo}
\frac{\|x_{k+1}-x_k\|}{\alpha_k} \ge \rho \qquad \text{for all } k \in K'.
\end{equation}
First, observe that the sequence $ \{ \alpha_k \}_{k \in K'} $ cannot be bounded away from zero. Indeed, if there existed $\bar \alpha>0$ such that $ \alpha_k \ge \bar\alpha$ for all $k \in K'$, then, by~\eqref{eqn:assurdo} and Proposition~\ref{prop:conv_diff}, we would obtain that, for $k\in K'$, 
$$0 < \rho \bar \alpha \le \rho \alpha_k \le \|x_{k+1}-x_k\| \xrightarrow{k \to \infty} 0,$$ 
which is absurd.
Then, we can find an infinite subset $K'' \subset K'$ such that $\alpha_k < \alpha_{\min}$ for all $k \in K''$ and $\{\alpha_k\}_{k \in K''}$ converges to $0$. For every $k \in K''$, let $j \in \mathbb N$ be the inner iteration of Algorithm~\ref{alg:ggmm} such that 
$$\alpha_k=\alpha_{kj}=\alpha_{k0}\delta^j \qquad \text{and} \qquad \beta_k=\beta_{kj}=\beta_{k0}\delta^{ja}.$$
Here, $j=j(k)$ depends on the outer iteration $k$, but we omit the dependence on $k$ to simplify the notation.  Since $\alpha_k < \alpha_{\min}$, we have $j>0$.
Therefore, we can define 
$$ \hat \alpha_k := \alpha_k / \delta = \delta^{j - 1} \alpha_{k0} = \alpha_{k,j - 1} \qquad \text{and} \qquad \hat \beta_k := \beta_k / \delta^a = \delta^{(j - 1)a} \beta_{k0} = \beta_{k,j - 1}.$$ 
Then, also $\{\hat \alpha_k\}_{k \in K''}$ and $\{\hat\beta_k\}_{k \in K''}$ converge to $0$. Note that 
\begin{equation}
\label{eqn:beta/alpha}
\frac{\hat \beta_k}{\hat\alpha_k}=\frac{\beta_{k0}}{\alpha_{k0}}\delta^{(a-1)(j-1)} \le \frac{\beta_{\max}}{\alpha_{\min}}.
\end{equation}
Moreover, define $\hat x_{k+1}:=x_{k,j-1}$. As $j>0$, $\hat x_{k+1}$ violates the nonmonotone Armijo-type condition, i.e., for all $k \in K''$,
\begin{equation}\label{eqn:armijo_no}
	f(\hat x_{k+1}) > \mu_k - \frac{c}{2\hat\alpha_k} \|\hat x_{k+1}-x_k\|^2 \ge f(x_k)  - \frac{c}{2\hat\alpha_k} \|\hat x_{k+1}-x_k\|^2.
    \end{equation}
Since $ \hat x_{k+1} \in \arg\min_{y \in D} Q_{k,j-1}(y)$, then $Q_{k,j-1}(\hat x_{k+1})\le Q_{k,j-1}(x_k)$. Hence, we have, recalling~\eqref{eqn:Qkj}--\eqref{eqn:Q_cost},

\begin{equation}
\label{eqn:dis_prod_scalare}
\langle \nabla f(x_k) - \frac{\hat \beta_k}{\hat \alpha_k}(x_k-x_{k-1}), \hat x_{k+1}-x_k \rangle + \frac{1}{2\hat \alpha_k}\| \hat x_{k+1} - x_k\|^2 \le 0.
\end{equation}
Rearranging, applying the Cauchy--Schwarz and the triangle inequalities, dividing by $\|\hat x_{k+1}-x_k\|$, and using~\eqref{eqn:beta/alpha}, we therefore get 
\begin{equation}
\label{eqn:dis_rapporto}
\frac{1}{2\hat\alpha_k} \| \hat x_{k+1} - x_k \|   \le \|  \nabla f(x_k) - \frac{\hat \beta_k}{\hat \alpha_k}(x_k-x_{k-1}) \|  \le  \|  \nabla f(x_k) \| + \frac{\beta_{\max}}{\alpha_{\min}} \|x_k - x_{k-1}\|.
\end{equation}
Now, we take the limit for $k$ going to infinity (with $k \in K''$) in~\eqref{eqn:dis_rapporto}. First, observe that $\{\|\nabla f(x_k)\|\}_{k \in K}$ converges to $\| \nabla f(\bar x)\| \in \mathbb R$ (as $\{x_k\}_{k \in K}$ converges to $\bar x$). 
Hence, recalling~\eqref{eqn:conv_diff}, the RHS of~\eqref{eqn:dis_rapporto} converges to a finite quantity. Hence, recalling that $\{\hat \alpha_k\}_{k \in K''}$ tends to $0$, we have 
\begin{equation}
\label{eqn:lim_diff_xk+1}
\lim_{k \in K'', k \to \infty}\|\hat x_{k+1} - x_k\|= 0 \qquad \text{and} \qquad \lim_{k \in K'', k \to \infty}\|\hat  x_{k+1} - x_{k+1}\|= 0. 
\end{equation}
Hence, we conclude that
 $ \{\hat x_{k+1}\}_{k \in K''} $ converges to $\bar x$.
By the mean value theorem, for each $k\in {K''}$, there exists $\xi_k$ on the segment
from $\hat x_{k+1}$ to $x_k$ such that
\begin{equation}
\label{eqn:mean_value}
f(\hat x_{k+1})-f(x_k)= \langle \nabla f(\xi_k), \hat x_{k+1}-x_k\rangle.
\end{equation}
Since both $ \{\hat x_{k+1}\}_{k \in K''}$ and $\{ x_k \}_{k \in K''}$ converge to $\bar x$, we find that 
\begin{equation}
\label{eqn:lim_diff_grad}
\lim_{k \in K'', k \to \infty}\|\nabla f(\xi_k)-\nabla f(x_k)\| = 0.
\end{equation}
For all $k \in K''$, we get
\begin{align*}
	 & - \frac{c}{2\hat\alpha_k} \|\hat x_{k+1}-x_k\|^2
	 <
	f(\hat x_{k+1}) - f(x_k) = \langle \nabla f(\xi_k), \hat x_{k+1}-x_k\rangle \\
     &=\langle \nabla f(x_k), \hat x_{k+1} - x_k \rangle
	+
	\langle \nabla f(\xi_k) - \nabla f(x_k) , \hat x_{k+1} - x_k\rangle \\
	&\le
	\frac{\hat \beta_k}{\hat \alpha_k}\langle x_k - x_{k-1}, \hat x_{k+1}-x_k\rangle - \frac{1}{2\hat \alpha_k}\|\hat x_{k+1}-x_k\|^2
	+
	\langle \nabla f(\xi_k) - \nabla f(x_k) , \hat x_{k+1} - x_k\rangle \\
    &\le
	\frac{\beta_{\max}}{\alpha_{\min}}\| x_k - x_{k-1}\| \|\hat x_{k+1}-x_k\| - \frac{1}{2\hat \alpha_k}\|\hat x_{k+1}-x_k\|^2
	+
	\| \nabla f(\xi_k) - \nabla f(x_k) \| \|\hat x_{k+1} - x_k\|,
\end{align*}
where in the first inequality we used~\eqref{eqn:armijo_no}, in the first equality~\eqref{eqn:mean_value}, in the second inequality~\eqref{eqn:dis_prod_scalare}, and in the last one we employed~\eqref{eqn:beta/alpha} and  the Cauchy--Schwarz inequality. Rearranging this chain of inequalities and dividing by $\|\hat x_{k+1}-x_k\|$, we obtain
\begin{equation*}
\frac{1-c}{2\hat \alpha_k}\|\hat x_{k+1}-x_k\| \le \frac{\beta_{\max}}{\alpha_{\min}} \| x_k - x_{k-1}\|
	+
	\| \nabla f(\xi_k) - \nabla f(x_k) \| .
\end{equation*}
Thus, using~\eqref{eqn:conv_diff} and~\eqref{eqn:lim_diff_grad}, we obtain
\begin{equation}
\label{eqn:lim_hat/alpha}
\lim_{k \in K'', k\to \infty} \frac{\|\hat x_{k+1} - x_k\|}{\hat \alpha_k} = 0.
\end{equation}
Until this point, we focused on the relationship between $\hat x_{k+1}$ and $x_k$; now, we focus on the relationship between $\hat x_{k+1}$ and $x_{k+1}$. 
Since $\hat x_{k+1} \in \arg\min_{y \in D} Q_{k, j-1}(y)$, we have $Q_{k,j-1}(\hat x_{k+1}) \le Q_{k,j-1}(x_{k+1})$. Then, recalling~\eqref{eqn:Qkj}, we have
\begin{equation}
\label{eqn:dis_Q1}
\begin{split}
&\langle \nabla f(x_k) - \frac{\hat\beta_k}{\hat\alpha_k}(x_k-x_{k-1}), \hat x_{k+1}-x_k \rangle + \frac{1}{2\hat\alpha_k}\| \hat x_{k+1} - x_k\|^2 \\
\le  &\langle \nabla f(x_k) - \frac{\hat\beta_k}{\hat\alpha_k}(x_k-x_{k-1}), x_{k+1}-x_k \rangle + \frac{1}{2\hat\alpha_k}\|x_{k+1} - x_k\|^2.
\end{split}
\end{equation}
Analogously, since $ x_{k+1} \in \arg\min_{y \in D} Q_{k, j}(y)$, we have $Q_{k,j}( x_{k+1}) \le Q_{k,j}(\hat x_{k+1})$. Then, recalling~\eqref{eqn:Qkj}, we have
\begin{align*}
&\langle \nabla f(x_k) - \frac{\beta_k}{\alpha_k}(x_k-x_{k-1}), x_{k+1}-x_k \rangle + \frac{1}{2\alpha_k}\|  x_{k+1} - x_k\|^2 \\
\le  &\langle \nabla f(x_k) - \frac{\beta_k}{\alpha_k}(x_k-x_{k-1}), \hat x_{k+1}-x_k \rangle + \frac{1}{2\alpha_k}\|\hat x_{k+1} - x_k\|^2,
\end{align*}
which, remembering  that $\alpha_k= \delta\hat \alpha_k$ and $\beta_k=\delta^a \hat\beta_k$, can be rewritten as
\begin{equation}
\label{eqn:dis_Q2}
\begin{split}
&\langle \nabla f(x_k) - \frac{\delta^{a-1}\hat\beta_k}{\hat\alpha_k}(x_k-x_{k-1}), x_{k+1}-x_k \rangle + \frac{1}{2\delta\hat\alpha_k}\|  x_{k+1} - x_k\|^2 \\
\le &\langle \nabla f(x_k) - \frac{\delta^{a-1}\hat\beta_k}{\hat\alpha_k}(x_k-x_{k-1}), \hat x_{k+1}-x_k \rangle + \frac{1}{2\delta\hat\alpha_k}\|\hat x_{k+1} - x_k\|^2.
\end{split}
\end{equation}
Summing both sides of~\eqref{eqn:dis_Q1} and~\eqref{eqn:dis_Q2}, we have:
    \begin{equation}
    \label{eqn:dis_quadrati/alpha}
	 \frac{1-\delta}{2\delta\hat \alpha_k} \|x_{k+1}-x_k\|^2 \le \frac{1-\delta}{2\delta\hat \alpha_k} \|\hat x_{k+1}-x_k\|^2 + (1-\delta^{a-1})\frac{\hat \beta_k}{\hat \alpha_k}  
    \langle x_k - x_{k-1}, \hat x_{k+1} - x_{k+1} \rangle.
    \end{equation}
Hence, using $\alpha_k = \delta \hat \alpha_k$, and exploiting~\eqref{eqn:dis_quadrati/alpha} (after multiplying both sides by $\frac{2}{\delta(1-\delta)\hat \alpha_k}$), we obtain that for all $k \in K''$,
\begin{equation}
\label{eqn:dis_quadrati}
\begin{split}	
\frac{\|x_{k+1}-x_k\|^2}{\alpha_k^2} &= 
\frac{\|x_{k+1}-x_k\|^2}{\delta^2\hat \alpha_k^2} \\  &\le \frac{\|\hat x_{k+1}-x_k\|^2}{\delta^2\hat \alpha_k^2}  + \frac{2(1-\delta^{a-1}) }{\delta(1-\delta)}  
   \frac{\hat \beta_k}{\hat \alpha_k^2}  
    \langle x_k - x_{k-1}, \hat x_{k+1} - x_{k+1} \rangle \\
    & \le \frac{\|\hat x_{k+1}-x_k\|^2}{\delta^2\hat \alpha_k^2}  + \frac{2(1-\delta^{a-1}) }{\delta(1-\delta)} \frac{\beta_{\max}}{ \alpha_{\min}^2}  
    \| x_k - x_{k-1}\| \|\hat x_{k+1} - x_{k+1} \|,
\end{split}
\end{equation}
where in last inequality we used the Cauchy-Schwarz inequality and the fact that  
$$\frac{\hat \beta_k}{\hat \alpha_k^2}  = \frac{\beta_{k0}\delta^{(j-1)a}}{\alpha_{k0}^2(\delta^{j-1})^2} = \frac{\beta_{k0}}{\alpha_{k0}^2}\delta^{(j-1)(a-2)} \le \frac{\beta_{\max}}{\alpha_{\min}^2}.$$
Observe that, using~\eqref{eqn:conv_diff},~\eqref{eqn:lim_diff_xk+1}, and~\eqref{eqn:lim_hat/alpha}, the RHS of~\eqref{eqn:dis_quadrati} tends to $0$ for $k \in K''$ and $k\to\infty$.
Then, 
$$\lim_{k \in K'',k \to \infty} \frac{\|x_{k+1}-x_k\|}{\alpha_k}=0,$$
but this is in contrast with~\eqref{eqn:assurdo}. Hence, we conclude that~\eqref{eqn:tesi_conv_rapporto} is verified.
\end{proof}

\medskip
We are finally ready to state the first main asymptotic convergence result of this manuscript.

\medskip
\begin{theorem}
\label{theo:conv_eps0}
Let $a\ge2,\tau=0$. Assume that $f$ is continuously differentiable. Further assume
Assumption~\ref{ass:rule}. Then, if, for every $k$, $x_k$ is not an M-stationary point, then
any accumulation point  $\bar x$ of the sequence $\{x_k\}$ generated by Algorithm~\ref{alg:ggmm} is an M-stationary point for problem~\eqref{eqn:prob}.
\end{theorem}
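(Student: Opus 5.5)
The plan is to pass to the limit in the M-stationarity inclusion \eqref{eqn:M-subprob} satisfied by the accepted iterate. Since every $x_k$ is non-M-stationary, Proposition~\ref{prop:armijo} guarantees that every inner loop terminates. The outer loop therefore never stops, so the sequence $\{x_k\}$ is infinite. Let $\bar x$ be an accumulation point and $K\subset\mathbb N$ an index set with $\{x_k\}_{k\in K}\to\bar x$. Note $\bar x\in D$ because $D$ is closed. The hypotheses of Propositions~\ref{prop:conv_diff} and~\ref{prop:conv_rapporto} then all hold, and we obtain \eqref{eqn:conv_diff} and \eqref{eqn:tesi_conv_rapporto} along $K$.

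For each $k$ the accepted point $x_{k+1}=y_{kj}$ with $j=j(k)$ satisfies \eqref{eqn:M-subprob}, that is,
\begin{equation*}
v_k:=-\nabla f(x_k)-\frac{1}{\alpha_k}(x_{k+1}-x_k)+\frac{\beta_k}{\alpha_k}(x_k-x_{k-1})\in\mathcal N_D(x_{k+1}).
\end{equation*}
We then show that $v_k\to-\nabla f(\bar x)$ along $K$, term by term.
\begin{itemize}
\item $\nabla f(x_k)\to\nabla f(\bar x)$ by continuity of $\nabla f$.
\item $\|x_{k+1}-x_k\|/\alpha_k\to0$ along $K$ by Proposition~\ref{prop:conv_rapporto}.
\item For the momentum term, with $\tau=0$ we have $\beta_k/\alpha_k=(\beta_{k0}/\alpha_{k0})\delta^{j(a-1)}\le\beta_{\max}/\alpha_{\min}$. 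Since $\|x_k-x_{k-1}\|\to0$ by \eqref{eqn:conv_diff}, this term vanishes as well. No control on $j(k)$ is needed, because the ratio is uniformly bounded once $a\ge1$.
\end{itemize}
In addition, $x_{k+1}\to\bar x$ along $K$, again by \eqref{eqn:conv_diff}.

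It remains to invoke the outer semicontinuity (closedness of the graph) of the limiting normal cone $\mathcal N_D$. We have $x_{k+1}\in D$, $x_{k+1}\to\bar x$, $v_k\in\mathcal N_D(x_{k+1})$ and $v_k\to-\nabla f(\bar x)$, so $-\nabla f(\bar x)\in\mathcal N_D(\bar x)$; this is exactly the M-stationarity of $\bar x$. If one wants to use only the definition given in the paper, a diagonal argument suffices: each $v_k$ is a limit of regular normals at points near $x_{k+1}$, which yields sequences of regular normals converging to $-\nabla f(\bar x)$ at points converging to $\bar x$.

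The only delicate point is the term $\frac1{\alpha_k}(x_{k+1}-x_k)$, since $\alpha_k$ may tend to zero. This is precisely what Proposition~\ref{prop:conv_rapporto} handles, so I expect no real obstacle beyond checking that its hypotheses follow from Proposition~\ref{prop:armijo}, together with the bookkeeping on the index shift between $x_k$ and $x_{k+1}$.
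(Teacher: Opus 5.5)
Your proposal is correct and follows essentially the same route as the paper. You pass to the limit along $K$ in the inclusion \eqref{eqn:M-subprob} satisfied by $x_{k+1}$, using Proposition~\ref{prop:conv_diff} for $x_{k\pm1}\to\bar x$, Proposition~\ref{prop:conv_rapporto} for the term $\|x_{k+1}-x_k\|/\alpha_k$, the bound $\beta_k/\alpha_k\le\beta_{\max}/\alpha_{\min}$ for the momentum term, and outer semicontinuity of $\mathcal N_D$; your explicit appeal to Proposition~\ref{prop:armijo}, to ensure every inner loop terminates and the hypotheses of the two propositions hold, is bookkeeping the paper leaves implicit.
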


\medskip
\begin{proof}
Let $\{x_k\}$ be the sequence generated by Algorithm~\ref{alg:ggmm} and 
assume that $\bar x \in D$ is an accumulation point, with $\{x_k\}_{k \in K}$ converging to $\bar x$ on a subsequence $K \subset \mathbb N$.
Remember, from~\eqref{eqn:M-subprob}, that $x_{k+1}$ satisfies 
$$
-\nabla f(x_k)-\frac{1}{\alpha_{k}}(x_{k+1}-x_k)+\frac{\beta_{k}}{\alpha_{k}}(x_k-x_{k-1}) \in \mathcal N_D(x_{k+1}).
$$
Due to Proposition~\ref{prop:conv_diff}, we also have that $ \{ x_{k+1}\}_{k \in K}$ converges to $\bar x$. From Proposition~\ref{prop:conv_rapporto}, we have that $ \|x_{k+1}-x_k\|/\alpha_k$ converges to $0$ for $k$ going to infinity and $k \in K$.
Hence, taking the limit for $k \in K, k \to \infty $
and exploiting the continuity of $\nabla f$, the upper semicontinuity of 
the general normal cone, and the fact that $\lim_{k \to \infty}\frac{\beta_k}{\alpha_k}(x_k-x_{k-1})=0$ (again from Proposition~\ref{prop:conv_diff} and the fact that $\frac{\beta_k}{\alpha_k} \in [0,\frac{\beta_{\max}}{\alpha_{\min}}]$), we obtain 
\begin{equation*}
   -\nabla f(\bar x) \in \mathcal N_D (\bar x),
\end{equation*}
i.e., $ \bar x $ is an M-stationary point for problem~\eqref{eqn:prob}.
\end{proof}

\subsection{Convergence to B- and P-stationary points for $\tau>0$}

Now, we assume that in Algorithm~\ref{alg:ggmm} we take $\tau>0$ and $a\ge1$ (possibly, $a=1$). We will show in this section that, similarly to the baseline PGM, if $\bar x$ is an accumulation point of the sequence $\{x_k\}$ the following statements hold:

\medskip
\begin{enumerate}
\item if $f$ is continuously differentiable and the stopping condition of Algorithm~\ref{alg:ggmm} is based on B-stationarity, then:
\smallskip
\begin{enumerate}[1)]
\item in a neighborhood of $\bar x$, the backtracking in Lines~\ref{line:linesearch_start}--\ref{line:linesearch_end} terminates in a bounded number of iterations (Propositions~\ref{prop:ArbitrarySmallAlpha}--\ref{prop:AcceptedStep});
\item $\bar x$ is a B-stationary point for problem~\eqref{eqn:prob} (Theorem~\ref{theo:conv_eps>0});
\end{enumerate}
\smallskip
\item if $f$ is continuously differentiable with $\nabla f$ locally Lipschitz and the stopping condition of Algorithm~\ref{alg:ggmm} is based on P-stationarity, then:
\smallskip
\begin{enumerate}[1)]
\item in a neighborhood of $\bar x$, the backtracking in Lines~\ref{line:linesearch_start}--\ref{line:linesearch_end} terminates in a bounded number of iterations (Proposition~\ref{prop:ArmijoConditionLipschitzContinuousGradient} and Corollary~\ref{coro:PGDmapTerminationLipschitzContinuousGradient});
\item $\bar x$ is a P-stationary point for problem~\eqref{eqn:prob} (Theorem~\ref{theo:conv_P}).
\end{enumerate}
\end{enumerate}

\medskip
We will treat in detail the two scenarios separately in the following subsections. 
\subsubsection{B-stationarity}
\label{sec:Bstat}
First, we only assume that $f$ is continuously differentiable and that the stopping condition of Algorithm~\ref{alg:ggmm} is based on B-stationarity, studying the convergence to B-stationary points. 
We begin by recalling a result from the literature.

\medskip
\begin{proposition}[{\cite[Prop.~5.2]{olikier2025projected}}]
\label{prop:ArbitrarySmallAlpha}
Assume that $f$ is continuously differentiable. Let $\bar x \in D$ be non-B-stationary for problem~\eqref{eqn:prob}, and $w \in T_D(\bar x)$ be such that
$$
\langle w, \nabla f(\bar x) \rangle < 0.
$$
Let $\delta \in (0, 1)$. Define $\kappa := \sqrt{1 - \frac{\delta \langle w,\nabla f(\bar x)\rangle^2} {8 \|w\|^2 \|\nabla f(\bar x)\|^2}} \in (0, 1)$.
For every $\varepsilon \in (0, \infty)$, there exist $\alpha_{\bar x} \in (0, \varepsilon]$ and $\bar{\rho}(\alpha_{\bar x}) \in (0, \infty)$ such that, for all $x \in B(\bar x, \bar{\rho}(\alpha_{\bar x})) \cap D$ and $\alpha \in [\alpha_{\bar x}, \alpha_{\bar x}/\delta]$,
\begin{equation*}
d_D(x-\alpha\nabla f(x)) \le \kappa \alpha \|\nabla f(x)\|,
\end{equation*}
which implies, for all $y \in P_D(x-\alpha\nabla f(x))$,
\begin{equation*}
\langle \nabla f(x),y-x\rangle \le - \sqrt{1-\kappa^2} \|\nabla f(x)\| \|y-x\|.
\end{equation*}
\end{proposition}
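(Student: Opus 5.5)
The plan is to build, for each $x$ near $\bar x$ and each $\alpha$ in a window $[\alpha_{\bar x},\alpha_{\bar x}/\delta]$, a single explicit point of $D$ close to $x-\alpha\nabla f(x)$. That point comes from the tangent direction $w$, and the slack between $1-\cos^2$ and $\kappa^2$ absorbs all approximation errors. The second claim then follows from the first by an elementary AM--GM computation.

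\textbf{Setup.} Write $g:=\nabla f(\bar x)$, which is nonzero since $\bar x$ is not B-stationary, and set
$$\gamma:=\frac{|\langle w,g\rangle|}{\|w\|\,\|g\|}\in(0,1],$$
so that $\kappa^2=1-\delta\gamma^2/8$. Since $w\in T_D(\bar x)$, there are $z_k\in D$ and $t_k\downarrow 0$ with
$$\|z_k-\bar x-t_k w\|=o(t_k).$$
Given $\varepsilon>0$, I will fix $k$ large, set $t:=t_k$, $z:=z_k$, and define
$$\alpha_{\bar x}:=\frac{t\|w\|^2}{|\langle g,w\rangle|},$$
which is $\le\varepsilon$ for $k$ large.

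\textbf{Key estimate.} For $\alpha\in[\alpha_{\bar x},\alpha_{\bar x}/\delta]$, write $t=\lambda\,\alpha|\langle g,w\rangle|/\|w\|^2$ with $\lambda\in[\delta,1]$. Expanding the square gives
$$\|\alpha g+t w\|^2=\alpha^2\|g\|^2\bigl(1-(2\lambda-\lambda^2)\gamma^2\bigr)\le \alpha^2\|g\|^2(1-\delta\gamma^2),$$
using $2\lambda-\lambda^2\ge\lambda\ge\delta$. For $x\in D$ near $\bar x$, the triangle inequality then yields
$$d_D(x-\alpha\nabla f(x))\le\|x-\alpha\nabla f(x)-z\|\le \|x-\bar x\|+\alpha\|\nabla f(x)-g\|+\alpha\|g\|\sqrt{1-\delta\gamma^2}+o(t).$$
Since $t$ and $\alpha$ are comparable (their ratio lies in a fixed compact interval), the $o(t)$ term is at most $\eta\alpha\|g\|$ for any prescribed $\eta>0$ once $k$ is large. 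I then choose $\bar\rho(\alpha_{\bar x})$ small relative to $\alpha_{\bar x}$ so that $\|x-\bar x\|\le\eta\alpha\|g\|$, and so that continuity of $\nabla f$ gives both $\|\nabla f(x)-g\|\le\eta\|g\|$ and $\|\nabla f(x)\|\ge(1-\eta)\|g\|$. Because
$$\sqrt{1-\delta\gamma^2}<\sqrt{1-\delta\gamma^2/8}=\kappa,$$
a small enough $\eta$ gives $d_D(x-\alpha\nabla f(x))\le\kappa\alpha\|\nabla f(x)\|$.

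\textbf{Main obstacle.} The hard part is the order of choices: $k$ (hence $\alpha_{\bar x}$) must be fixed first, using only the tangent approximation, and $\bar\rho$ chosen afterwards depending on $\alpha_{\bar x}$. One must also check that a \emph{single} point $z$ serves the whole window of $\alpha$. The factor $\delta$ in $\kappa$ is exactly what makes this single choice work.

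\textbf{Second claim.} For $y\in P_D(x-\alpha\nabla f(x))$ we have $\|y-x+\alpha\nabla f(x)\|\le\kappa\alpha\|\nabla f(x)\|$. Squaring gives
$$\|y-x\|^2+2\alpha\langle\nabla f(x),y-x\rangle\le-(1-\kappa^2)\alpha^2\|\nabla f(x)\|^2,$$
hence
$$\langle\nabla f(x),y-x\rangle\le-\frac{\|y-x\|^2+(1-\kappa^2)\alpha^2\|\nabla f(x)\|^2}{2\alpha}\le-\sqrt{1-\kappa^2}\,\|\nabla f(x)\|\,\|y-x\|,$$
where the last step is AM--GM.
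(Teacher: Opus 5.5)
Your proof is correct. The paper does not prove this statement itself; it quotes it from \cite[Prop.~5.2]{olikier2025projected}. Your argument therefore supplies what the paper leaves to the reference, and it follows the natural tangent-sequence route on which that result rests: a single point $z\in D$ close to $\bar x+tw$ serves the whole stepsize window, and then $1$-Lipschitz-type triangle estimates and continuity of $\nabla f$ move you from $\bar x$ to nearby $x$.

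The individual steps check out:
\begin{itemize}
\item With $\lambda=\alpha_{\bar x}/\alpha\in[\delta,1]$, the identity $\|\alpha g+tw\|^2=\alpha^2\|g\|^2\bigl(1-(2\lambda-\lambda^2)\gamma^2\bigr)$ is right, and $2\lambda-\lambda^2\ge\delta$ gives the bound you use.
\item The ratio $t/\alpha\le|\langle g,w\rangle|/\|w\|^2$ holds uniformly over the window. Hence fixing $k$ large once absorbs the tangent error $o(t)$ for every $\alpha$ simultaneously.
\item Choosing $\bar\rho$ after $\alpha_{\bar x}$, and any $\eta>0$ with $3\eta+\sqrt{1-\delta\gamma^2}\le\kappa(1-\eta)$, closes the first claim.
\item The AM--GM derivation of the angle inequality is also fine.
\end{itemize}

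Your version yields two small observations.
\begin{itemize}
\item Your core estimate produces the factor $\sqrt{1-\delta\gamma^2}$. So the constant $1/8$ in $\kappa$ is pure slack: any constant in $(0,1)$ in its place would work with the same argument.
\item Neither $x\in D$ nor non-B-stationarity, beyond the existence of $w$, is actually used. In particular, $g\neq 0$ already follows from $\langle w,g\rangle<0$.
\end{itemize}
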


At this point, we are able to prove that in a neighborhood of a non-B-stationary point $\bar x$ the backtracking procedure terminates in a bounded number of iterations. Specifically, we show that there exists an interval of stepsizes $[\alpha_{\bar x}, \alpha_{\bar x}/\delta]$ (for some $\alpha_{\bar x} >0$) where the sufficient decrease condition holds and which is certainly visited throughout the backtracking procedure.

\medskip
\begin{proposition}
\label{prop:AcceptedStep}
Let $0 < \tau < \alpha_{\min}\le \alpha_{\max}<\infty$, $a\ge1,\beta_{\max}\ge0$,  and $\delta, c \in (0, 1)$.
Assume that $f$ is continuously differentiable. Let $\bar x \in D$ be non-B-stationary for problem~\eqref{eqn:prob}.
There exist $\alpha_{\bar x} \in (0, \delta\tau]$ and $\rho \in (0, \infty)$ such that, for all $x \in B(\bar x, \rho) \cap D$, $\alpha_0\in[ \alpha_{\min},  \alpha_{\max}]$, $\beta_0\in[0,\beta_{\max}]$, $s \in \mathbb X$,  $\alpha \in [\alpha_{\bar x}, \alpha_{\bar x}/\delta]$, and $y \in P_D(x-\alpha\nabla f(x)+\beta(\alpha) s)$, where $\beta(\alpha) = \beta_0 \big(\frac{\max\{\alpha-\tau,0\}}{\alpha_0-\tau}\big)^a$, the following sufficient decrease condition holds:
\begin{equation*}
f(y) < f(x) - \frac{c}{2\alpha}\|y-x\|^2.
\end{equation*}
\end{proposition}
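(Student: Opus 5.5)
The key observation is that by choosing $\alpha_{\bar x}\le\delta\tau$, every stepsize $\alpha\in[\alpha_{\bar x},\alpha_{\bar x}/\delta]$ satisfies $\alpha\le\tau$. Hence $\max\{\alpha-\tau,0\}=0$ and $\beta(\alpha)=0$, independently of $\alpha_0$, $\beta_0$ and $s$. On this interval the tentative point is therefore a pure projected-gradient point, $y\in P_D(x-\alpha\nabla f(x))$. The momentum term, together with the arbitrary vector $s$ and the parameters $\alpha_0,\beta_0$, drops out entirely, and the statement reduces to the corresponding PGM result. I would prove that result directly from Proposition~\ref{prop:ArbitrarySmallAlpha}.

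\textbf{Step 1: choosing the constants.} Since $\bar x$ is not B-stationary, $-\nabla f(\bar x)\notin\widehat{\mathcal N}_D(\bar x)$. So there is $w\in T_D(\bar x)$ with $\langle w,\nabla f(\bar x)\rangle<0$, and in particular $\nabla f(\bar x)\neq0$. Let $\kappa\in(0,1)$ be as in Proposition~\ref{prop:ArbitrarySmallAlpha} and set $\eta:=\sqrt{1-\kappa^2}>0$. I would invoke that proposition with some $\varepsilon\le\delta\tau$, to be fixed in Step 3. This gives $\alpha_{\bar x}\in(0,\varepsilon]$ and a radius $\bar\rho(\alpha_{\bar x})$. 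Then for every $x\in B(\bar x,\bar\rho)\cap D$, every $\alpha\in[\alpha_{\bar x},\alpha_{\bar x}/\delta]$ and every $y\in P_D(x-\alpha\nabla f(x))$,
\[
\langle\nabla f(x),y-x\rangle\le-\eta\|\nabla f(x)\|\|y-x\|.
\]
From \eqref{eqn:propr_proj1} we also get $\|y-x\|\le2\alpha\|\nabla f(x)\|$. By continuity, after shrinking the radius we may assume $\|\nabla f(x)\|\ge\frac12\|\nabla f(\bar x)\|$ on the ball.

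\textbf{Step 2: decrease estimate.} By the mean value theorem, $f(y)-f(x)=\langle\nabla f(\xi),y-x\rangle$ for some $\xi$ on the segment $[x,y]$. Splitting off $\nabla f(\xi)-\nabla f(x)$ and using the angle bound gives
\[
f(y)-f(x)\le-\eta\|\nabla f(x)\|\|y-x\|+\|\nabla f(\xi)-\nabla f(x)\|\|y-x\|.
\]
The target is $-\frac{c}{2\alpha}\|y-x\|^2$. Using $\|y-x\|\le2\alpha\|\nabla f(x)\|$, we have
\[
\tfrac{c}{2\alpha}\|y-x\|^2\le c\|\nabla f(x)\|\|y-x\|.
\]
So it suffices to have
\[
\|\nabla f(\xi)-\nabla f(x)\|<(\eta-c)\|\nabla f(x)\|,
\]
at least whenever $y\neq x$ (if $y=x$ the claimed strict inequality would fail).

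\textbf{Step 3: the main obstacle.} Two difficulties remain. First, $c$ need not be smaller than $\eta$, which can be tiny. Second, the step $y=x$ must be excluded. For the first, I would not use the crude bound $\|y-x\|\le2\alpha\|\nabla f(x)\|$. Instead I would use \eqref{eqn:propr_proj2} with $v=\alpha\nabla f(x)$, which gives the sharper inequality
\[
\langle\nabla f(x),y-x\rangle\le-\frac{1}{2\alpha}\|y-x\|^2.
\]
Convex-combining this with the angle inequality gives, for any $\theta\in(0,1)$,
\[
\langle\nabla f(x),y-x\rangle\le-\theta\tfrac{1}{2\alpha}\|y-x\|^2-(1-\theta)\eta\|\nabla f(x)\|\|y-x\|.
\]
Choosing $\theta\in(c,1)$ leaves a margin $(1-\theta)\eta\|\nabla f(x)\|\|y-x\|$ to absorb the gradient-variation error. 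For the second difficulty, the distance estimate $d_D(x-\alpha\nabla f(x))\le\kappa\alpha\|\nabla f(x)\|<\alpha\|\nabla f(x)\|$ shows that $x$ is not a projection point, so $y\neq x$. Finally, $\|\xi-x\|\le\|y-x\|\le2\frac{\alpha_{\bar x}}{\delta}\|\nabla f(x)\|$ and $\|x-\bar x\|<\rho$ are both small if $\varepsilon$ and $\rho$ are small. Uniform continuity of $\nabla f$ on a compact ball around $\bar x$ then makes $\|\nabla f(\xi)-\nabla f(x)\|<(1-\theta)\eta\cdot\frac12\|\nabla f(\bar x)\|$. This is where $\varepsilon$ must be chosen first, with $\rho\le\bar\rho(\alpha_{\bar x})$ afterwards. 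The delicate point is the order of quantifiers: $\varepsilon$ has to be fixed from the modulus of continuity before $\alpha_{\bar x}$ and $\bar\rho$ are produced. With that order respected, the strict sufficient decrease follows.
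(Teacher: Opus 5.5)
Your proof is correct, and its skeleton is the paper's. You kill the momentum term by forcing $\alpha\le\alpha_{\bar x}/\delta\le\tau$ and invoke Proposition~\ref{prop:ArbitrarySmallAlpha} with $\varepsilon\le\delta\tau$. You then absorb the error term with the angle bound and turn the remaining descent into $-\frac{c}{2\alpha}\|y-x\|^2$ via \eqref{eqn:propr_proj2}. The paper's closing step $f(x)+c\langle\nabla f(x),y-x\rangle$ is exactly your convex combination with $\theta=c$.

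The genuine difference is how the linearization error is controlled. The paper, following Olikier and Waldspurger, expands $f$ around the fixed point $\bar x$, so its error is $o(\|x-\bar x\|+\|y-\bar x\|)$. To make this proportional to $\|y-x\|$, it needs the quantitative lower bound $\|y-x\|\ge(1-\kappa)\alpha\|\nabla f(x)\|>\frac34(1-\kappa)\|x-\bar x\|$. That is what forces the coupling $\rho\le\alpha_{\bar x}\|\nabla f(\bar x)\|$ and the factor $1+\frac{8}{3(1-\kappa)}$ in the paper's first smallness requirement.

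Your mean value expansion on $[x,y]$ gives an error $\|\nabla f(\xi)-\nabla f(x)\|\,\|y-x\|$ that is proportional to $\|y-x\|$ from the outset. So you only need small oscillation of $\nabla f$ near $\bar x$. The estimate $d_D(x-\alpha\nabla f(x))\le\kappa\alpha\|\nabla f(x)\|$ is then used only qualitatively, to get $y\neq x$ and hence strictness. This is shorter. What the paper's version retains is that it evaluates $f$ and $\nabla f$ only at $\bar x$, $x$ and $y$, all in $D$. You need $\nabla f$ on segments that generally leave $D$, which is harmless here since $f$ is continuously differentiable on all of $\mathbb X$.

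Your quantifier order is right: fix $\kappa$ and $\theta$, then a ball and the modulus of continuity of $\nabla f$ on it, then $\varepsilon$, then $\alpha_{\bar x}$ and $\bar\rho(\alpha_{\bar x})$, and finally $\rho$. Just state explicitly that the bound on $\|\nabla f(x)\|$, used to make $\|y-x\|\le 2\frac{\alpha_{\bar x}}{\delta}\|\nabla f(x)\|$ small, is taken on that fixed ball before $\varepsilon$ is chosen.
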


\medskip
\begin{proof}
Define $\kappa$ as in Proposition~\ref{prop:ArbitrarySmallAlpha}. Let $\xi \in (0, \infty)$ be small enough to ensure
\begin{subequations}
\begin{align*}
\sup_{y \in  B\left(\bar x, \frac{7\xi}{2\delta}\|\nabla f(\bar x)\|\right) \cap D \setminus \{\bar x\}}
\frac{|f(y)-f(\bar x) - \langle\nabla f(\bar x),y-\bar x\rangle|}{\|y-\bar x\|}
&< \frac{(1-c)\sqrt{1-\kappa^2}\|\nabla f(\bar x)\|}{4\left(1 + \frac{8}{3(1-\kappa)}\right)},\\
\sup_{y \in  B\left(\bar x, \frac{7\xi}{2\delta}\|\nabla f(\bar x)\|\right) \cap D}
\|\nabla f(y)-\nabla f(\bar x)\|
&< \frac{(1-c)\sqrt{1-\kappa^2}}{4} \|\nabla f(\bar x)\|.
\end{align*}
\end{subequations}
Such a value $\xi$ certainly exists by the definition and the continuity of the gradient at $\bar{x}$.
Then, we can define $\varepsilon \coloneq \min\left\{ \xi,\delta\tau\right\}$, let $\alpha_{\bar x} \in (0, \varepsilon]$ and choose $\bar{\rho}(\alpha_{\bar x})\in(0,\infty)$ as given in Proposition~\ref{prop:ArbitrarySmallAlpha}. 

Since $\alpha_{\bar x} \le \varepsilon \le \delta\tau$, we can consequently observe that, for every $\alpha \in [\alpha_{\bar x}, \alpha_{\bar x}/\delta]$, we have $\alpha \le \alpha_{\bar x}/\delta\le\tau$, and hence $\max\{\alpha-\tau,0\}=0$.  For all $\alpha_0\in[\alpha_{\min}, \alpha_{\max}]$, $\beta_0\in[0,\beta_{\max}]$, and $a\ge1$, we therefore have 
$\beta(\alpha) = \beta_0 \big(\frac{\max\{\alpha-\tau,0\}}{\alpha_0-\tau}\big)^a =  0$. 
Then, for all $s \in \mathbb X$,  $P_D(x-\alpha\nabla f(x)+\beta(\alpha) s)=P_D(x-\alpha\nabla f(x))$. 
The remainder of the proof follows exactly as in~\cite[Prop.~5.3]{olikier2025projected}; for completeness, we report it in its entirety in Appendix \ref{sec:appendix}. 
\end{proof}

\medskip
We finally turn to the main result of convergence towards B-stationary points.

\medskip
\begin{theorem}
\label{theo:conv_eps>0}
Let $a\ge1,\tau>0$. Assume that the stopping condition of Algorithm~\ref{alg:ggmm} is based on B-stationarity. Further assume Assumption~\ref{ass:rule}. If, for every $k$, $x_k$ is not a B-stationary point, then
any accumulation point  $\bar x$ of the sequence $\{x_k\}$ generated by  Algorithm~\ref{alg:ggmm} is a B-stationary point for problem~\eqref{eqn:prob}. 
\end{theorem}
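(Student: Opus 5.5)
The argument goes by contradiction. Suppose an accumulation point $\bar x$, with $\{x_k\}_{k\in K}\to\bar x$, is not B-stationary. Then $-\nabla f(\bar x)\notin\widehat{\mathcal N}_D(\bar x)$, so there is $w\in T_D(\bar x)$ with $\langle w,\nabla f(\bar x)\rangle<0$. This is exactly the setting of Proposition~\ref{prop:AcceptedStep}, which gives $\alpha_{\bar x}\in(0,\delta\tau]$ and $\rho>0$.

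\textbf{Step 1: uniform lower bound on accepted steps.} Drop finitely many indices so that $x_k\in B(\bar x,\rho)\cap D$ for all $k\in K$. The trial stepsizes $\alpha_{kj}=\delta^j\alpha_{k0}$ decrease geometrically from $\alpha_{k0}\ge\alpha_{\min}>\delta\tau\ge\alpha_{\bar x}$. Hence the backtracking sequence must visit a value in $[\alpha_{\bar x},\alpha_{\bar x}/\delta]$ before falling below $\alpha_{\bar x}$. At that trial, Proposition~\ref{prop:AcceptedStep} applies with $s=x_k-x_{k-1}$, $\alpha_0=\alpha_{k0}$, $\beta_0=\beta_{k0}$. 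It gives $f(y)<f(x_k)-\frac{c}{2\alpha}\|y-x_k\|^2\le\mu_k-\frac{c}{2\alpha}\|y-x_k\|^2$, so the inner loop stops there at the latest. Therefore $\alpha_k\ge\alpha_{\bar x}$ for all $k\in K$. The inner loops at other iterates terminate as well: at each non-B-stationary $x_k$, apply the same proposition with $\bar x=x_k$. So the hypotheses of Proposition~\ref{prop:conv_diff} hold, and we get $\|x_{k+1}-x_k\|\to0$.

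\textbf{Step 2: the accepted steps project the plain gradient step.} I expect this to be the main obstacle. Step 1 only gives $\alpha_k\ge\alpha_{\bar x}$, while the momentum is switched off only when $\alpha_k\le\tau$. Accepted steps with $\alpha_k>\tau$ are possible, and for those $x_{k+1}$ projects a point containing momentum. I avoid this by passing to a further subsequence on which $\alpha_k\to\bar\alpha\in[\alpha_{\bar x},\alpha_{\max}]$, and then using optimality of $x_{k+1}$ for $Q_k$ against feasible points near $\bar x$. By Proposition~\ref{prop:conv_diff}, $\frac{\beta_k}{\alpha_k}(x_k-x_{k-1})\to0$, since $\beta_k/\alpha_k\le\beta_{\max}/\tau$ whenever $\beta_k>0$. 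Because $x_{k+1}$ minimizes $Q_k$ over $D$, for every $z\in D$
\[
\langle \nabla f(x_k)-\tfrac{\beta_k}{\alpha_k}(x_k-x_{k-1}),x_{k+1}-x_k\rangle+\tfrac{1}{2\alpha_k}\|x_{k+1}-x_k\|^2\le \langle \nabla f(x_k)-\tfrac{\beta_k}{\alpha_k}(x_k-x_{k-1}),z-x_k\rangle+\tfrac{1}{2\alpha_k}\|z-x_k\|^2 .
\]
Take $z=z_k\in D$ with $z_k\to\bar x$. In the limit the left side tends to $0$, while the right side tends to $\langle\nabla f(\bar x),z-\bar x\rangle+\frac{1}{2\bar\alpha}\|z-\bar x\|^2$. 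This yields
\[
0\le\langle\nabla f(\bar x),z-\bar x\rangle+\tfrac{1}{2\bar\alpha}\|z-\bar x\|^2\quad\text{for all } z\in D .
\]
Equivalently, $\bar x\in P_D(\bar x-\bar\alpha\nabla f(\bar x))$, so $-\nabla f(\bar x)$ is a proximal normal, hence lies in $\widehat{\mathcal N}_D(\bar x)$ by~\eqref{eqn:inclusions}. This contradicts the choice of $\bar x$.

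\textbf{Step 3: fallback if the direct limit fails.} If the limit in Step 2 needs more care (the $z_k$ must be feasible, which is fine since $z$ is fixed in $D$), I would instead argue as in the baseline PGM. Since $\mu_k\ge f(x_{k+1})+\frac{c}{2\alpha_{\max}}\|x_{k+1}-x_k\|^2$ and the $\mu_k$ converge under Assumption~\ref{ass:rule}, one can compare against the guaranteed decrease $\frac{c}{2\alpha}\|y-x_k\|^2$ at the trial $\alpha\in[\alpha_{\bar x},\alpha_{\bar x}/\delta]$. Proposition~\ref{prop:ArbitrarySmallAlpha} keeps $\|y-x_k\|$ bounded away from zero, because $\|\nabla f(\bar x)\|>0$. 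The mechanism is the same as in \cite[Thm.~5.5]{olikier2025projected}. Even so, I expect the fixed-point argument of Step 2 to suffice.
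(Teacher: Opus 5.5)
Your proposal is correct and follows the paper's proof. It argues by contradiction, uses Proposition~\ref{prop:AcceptedStep} to get $\alpha_k\ge\alpha_{\bar x}$ along $K$ and Proposition~\ref{prop:conv_diff} to make the momentum term vanish, and takes a limit along a subsequence with $\alpha_k\to\bar\alpha$ to obtain $\bar x\in P_D(\bar x-\bar\alpha\nabla f(\bar x))$, hence a proximal (so regular) normal. The paper passes to the limit in $\|u_k-x_{k+1}\|=d_D(u_k)$ (with $u_k$ the pre-projection point) via continuity of $d_D$, whereas you pass to the limit in $Q_k(x_{k+1})\le Q_k(z)$ for each \emph{fixed} $z\in D$ (not $z_k\to\bar x$, as your wording momentarily suggests, which would only give $0\le 0$); this is an equivalent closed-graph argument. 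Your explicit check that every inner loop terminates fills in a detail the paper leaves implicit, and Step~3 is unnecessary (as sketched it would not work anyway, since acceptance may occur before the trial stepsizes reach $[\alpha_{\bar x},\alpha_{\bar x}/\delta]$).
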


\medskip
\begin{proof}
Let $\bar x\in D$ be an accumulation point of $\{x_k\}$, with $\{x_k\}_{k \in K}$ converging to $\bar x$ on a subsequence $K \subset \mathbb N$. 
Let us assume by contradiction that $\bar x$ is not B-stationary for problem~\eqref{eqn:prob}. 
Let $\alpha_{\bar x}$ and $\rho$ be as in Proposition~\ref{prop:AcceptedStep}. For all $k \in K$ large enough, $x_k \! \in B(\bar x, \rho) \cap D$ ($x_k$ belongs to $D$ by the instructions of Algorithm~\ref{alg:ggmm}). Thus, the backtracking procedure in Lines~\ref{line:linesearch_start}--\ref{line:linesearch_end} terminates  with some $\alpha_k \ge \alpha_{\bar x}$ that satisfies the sufficient decrease condition, meaning that
\begin{equation*}
x_{k+1} \in P_D(x_k-\alpha_k \nabla f(x_k) +\beta_k(x_k-x_{k-1}))
\text{ for some } \alpha_k \in [\alpha_{\bar x}, \alpha_{\max}] \text{ and } \beta_k\in[0,\beta_{\max}].
\end{equation*}
We can assume (possibly replacing $K$ with a further subsequence) that $\{\alpha_k\}_{k\in K}$ converges to  $\alpha_{\lim} \in [\alpha_{\bar x}, \alpha_{\max}]$ and $\{\beta_k\}_{k\in K}$ converges to $\beta_{\lim} \in [0, \beta_{\max}]$.
For all $k \in K$, we have
\begin{equation*}
\|x_k - \alpha_k \nabla f(x_k) +\beta_k(x_k-x_{k-1}) - x_{k+1}\|
= d_D(x_k - \alpha_k \nabla f(x_k) +\beta_k(x_k-x_{k-1})),
\end{equation*}
and, by the continuity of the distance  to a nonempty closed set, take the limits: using $\lim_{k \to \infty}x_{k+1} =\bar x$ and $\lim_{k \to \infty} x_{k-1} = \bar x$, from Proposition~\ref{prop:conv_diff}, yields
\begin{equation*}
\alpha_{\lim}\| \nabla f(\bar x)\|
= d_D(\bar x - \alpha_{\lim}\nabla f(\bar x)).
\end{equation*}
Recalling Remark~\ref{obs:equiv_prox}, this is equivalent to $\bar x \in P_D(\bar x - \alpha_{\lim}\nabla f(\bar x))$, which in turn implies $-\nabla f(\bar x) \in \widehat{\mathcal N}_D^P(\bar x) \subseteq \widehat{\mathcal N}_D(\bar x)$. This contradicts the fact that $\bar x$ is not B-stationary for problem~\eqref{eqn:prob}. Hence, every accumulation point has to be B-stationary.
\end{proof}

\medskip
\begin{remark}
\label{obs:fail_prop}
In the proof of the previous proposition, we started by assuming that $\bar x$ is not a B-stationary point and we came to the contradiction that $\bar x$ is P-stationary. One may think that with the same reasoning the convergence to P-stationary points could be proven. However, even if $ \bar x$ is not P-stationary, it can be a B-stationary point. In this case, there does not exist any $w \in T_D(\bar x)$ such that
$\langle w, \nabla f(\bar x) \rangle < 0$. Hence, Proposition~\ref{prop:ArbitrarySmallAlpha} would not be exploitable in the reasoning.
\end{remark}

\subsubsection{P-stationarity}
\label{sec:Pstat}
We finally make the further assumption that $\nabla f$ is locally Lipschitz continuous and that the stopping condition of Algorithm~\ref{alg:ggmm} is based on P-stationarity, studying the convergence to P-stationary points.
As highlighted in Remark~\ref{obs:fail_prop}, the reasoning undertaken in the previous subsection cannot be repeated as a point can be non-P-stationary but B-stationary. However, exploiting the local Lipschitz property, we can still find an interval of stepsizes where the sufficient decrease condition is satisfied. Remember that, using the same notation as in~\cite{olikier2025projected},  for every ball $B(\bar x,\bar\rho)$,
\[\text{Lip}_{B(\bar x,\bar\rho)}(\nabla f):=\sup_{x,y \in B(\bar x,\bar\rho), x\not=y} \frac{\|\nabla f(x)-\nabla f(y)\|}{\|x-y\|} < \infty.\]
This implies (see, e.g.,~\cite{bertsekas1995nonlinear,grippo2023introduction}) that for every $x,y \in B(\bar x, \bar \rho)$,
\begin{equation}
\label{eqn:descentlemma}
|f(y)-f(x)- \langle \nabla f(x),y-x\rangle| \le \frac{\text{Lip}_{B(\bar x,\bar\rho)}(\nabla f)}{2}\|y-x\|^2.
\end{equation}

\begin{proposition}
\label{prop:ArmijoConditionLipschitzContinuousGradient}
Let $0 < \tau < \alpha_{\min}\le \alpha_{\max}<\infty$, $a\ge1,\beta_{\max}\ge0$,  $ c \in (0, 1)$, and $\rho \in (0,\infty)$.
Assume that $f$ is continuously differentiable with $\nabla f$ locally Lipschitz continuous. Let $\bar x \in D$, and $\bar{\rho} \in \left[\rho + 2 \alpha_{\max} \max_{x \in B (\bar x, \rho) \cap D} \|\nabla f(x)\|, \infty\right)$. Define $\alpha_* \coloneq (1-c)/\textnormal{Lip}_{B(\bar x, \bar{\rho})}(\nabla f)$.
Then, for all $x \in B(\bar x, \rho) \cap D$, $\alpha_0\in[ \alpha_{\min},  \alpha_{\max}]$, $\beta_0\in[0,\beta_{\max}]$, $s \in \mathbb X$,  $\alpha \in (0, \min\{\alpha_*, \tau\}]$, and $y \in P_D(x-\alpha\nabla f(x)+\beta(\alpha) s)$, where $\beta(\alpha) = \beta_0 \big(\frac{\max\{\alpha-\tau,0\}}{\alpha_0-\tau}\big)^a$,
\begin{equation*}
f(y) \le f(x) - \frac{c}{2\alpha}\|y-x\|^2.
\end{equation*}
\end{proposition}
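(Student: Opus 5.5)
Since $\alpha \le \tau$, we have $\max\{\alpha-\tau,0\}=0$, and therefore $\beta(\alpha)=0$ for every admissible $\alpha_0$, $\beta_0$ and every $a\ge1$. Consequently $P_D(x-\alpha\nabla f(x)+\beta(\alpha)s)=P_D(x-\alpha\nabla f(x))$ for all $s\in\mathbb X$. The momentum term thus disappears completely, and the claim becomes the standard statement for the plain projected gradient step (the analogue of~\cite[Prop.~5.3/Cor.~5.4]{olikier2025projected} under local Lipschitz continuity). So the plan is to fix $x\in B(\bar x,\rho)\cap D$, $\alpha\in(0,\min\{\alpha_*,\tau\}]$ and $y\in P_D(x-\alpha\nabla f(x))$, and to prove the decrease inequality directly.

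\emph{Step 1: localization.} Apply~\eqref{eqn:propr_proj1} with $v=\alpha\nabla f(x)$. This gives $\|y-x\|\le 2\alpha\|\nabla f(x)\|\le 2\alpha_{\max}\max_{z\in B(\bar x,\rho)\cap D}\|\nabla f(z)\|$, where we used $\alpha\le\tau<\alpha_{\min}\le\alpha_{\max}$; the maximum is finite because $\nabla f$ is continuous and $B(\bar x,\rho)\cap D$ is compact. Hence $\|y-\bar x\|\le\rho+2\alpha_{\max}\max\|\nabla f\|\le\bar\rho$, so both $x$ and $y$ lie in $B(\bar x,\bar\rho)$. This is exactly what the choice of $\bar\rho$ is designed to guarantee, and it is the only step where care is needed: the descent lemma~\eqref{eqn:descentlemma} must be applied on a ball that contains both points. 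If $\mathrm{Lip}_{B(\bar x,\bar\rho)}(\nabla f)=0$, we read $\alpha_*=\infty$ and the argument below still goes through.

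\emph{Step 2: descent estimate.} Write $L=\mathrm{Lip}_{B(\bar x,\bar\rho)}(\nabla f)$. By~\eqref{eqn:descentlemma}, $f(y)\le f(x)+\langle\nabla f(x),y-x\rangle+\frac{L}{2}\|y-x\|^2$. Next apply~\eqref{eqn:propr_proj2} with $v=\alpha\nabla f(x)$, which gives $\langle\nabla f(x),y-x\rangle\le-\frac{1}{2\alpha}\|y-x\|^2$. (Equivalently, $y$ minimizes $Q$ with $\beta=0$ and we compare with $Q(x)=0$.) Combining the two bounds yields
$f(y)\le f(x)-\frac{1}{2\alpha}(1-\alpha L)\|y-x\|^2$. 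Since $\alpha\le\alpha_*=(1-c)/L$, we have $1-\alpha L\ge c$, which is the claimed inequality $f(y)\le f(x)-\frac{c}{2\alpha}\|y-x\|^2$.

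The only real obstacle is therefore the bookkeeping in Step~1 showing that $y$ stays inside $B(\bar x,\bar\rho)$. Once that is settled, the vanishing of $\beta(\alpha)$ for $\alpha\le\tau$ reduces everything to the classical projected gradient argument.
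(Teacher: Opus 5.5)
Your proof is correct and takes essentially the same route as the paper. You note that $\beta(\alpha)=0$ for $\alpha\le\tau$, localize $y$ in $B(\bar x,\bar\rho)$ via~\eqref{eqn:propr_proj1}, and then combine the local descent lemma~\eqref{eqn:descentlemma} with~\eqref{eqn:propr_proj2} and $\alpha\le\alpha_*$; your handling of the degenerate case $\textnormal{Lip}_{B(\bar x,\bar\rho)}(\nabla f)=0$ is a small extra that the paper does not address.
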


\begin{proof}
For all $x \in B(\bar x, \rho) \cap D$ and $\alpha \in (0, \min\{\alpha_*, \tau\}]$, $P_D(x-\alpha\nabla f(x)) \subseteq B(\bar x, \bar{\rho})$; indeed, also recalling~\eqref{eqn:propr_proj1}, for all $y \in P_D(x-\alpha\nabla f(x))$ we have
\begin{equation*}
\|y-\bar x\|
\le \|y-x\| + \|x-\bar x\|
\le 2 \alpha \|\nabla f(x)\| + \rho
\le \bar{\rho},
\end{equation*}
Since $\alpha \le \tau$, $\max\{\alpha-\tau,0\}=0$. As a consequence, for all $\alpha_0\in[\alpha_{\min}, \alpha_{\max}]$, $\beta_0\in[0,\beta_{\max}]$, and $a\ge1$, 
$\beta(\alpha) = \beta_0 \big(\frac{\max\{\alpha-\tau,0\}}{\alpha_0-\tau}\big)^a =  0$. Then, for all $s \in \mathbb X$,  $P_D(x-\alpha\nabla f(x)+\beta(\alpha) s)=P_D(x-\alpha\nabla f(x))$.
Thus, for all $x \in B(\bar x, \rho) \cap D$, $\alpha \in (0, \min\{\alpha_*, \tau\}]$, and $y \in P_D(x-\alpha\nabla f(x)+\beta(\alpha) s)=P_D(x-\alpha\nabla f(x))$,
\begin{align*}
f(y)
&\le f(x) + \langle \nabla f(x), y-x \rangle + \frac{1}{2} \text{Lip}_{B(\bar x, \bar{\rho})}(\nabla f) \|y-x\|^2\\
&\le f(x) + \frac{1}{2\alpha}\left(-1+\alpha\text{Lip}_{B(\bar x, \bar{\rho})}(\nabla f)\right) \|y-x\|^2\\
& \le f(x) - \frac{c}{2\alpha}\|y-x\|^2, 
\end{align*}
where the first inequality comes from~\eqref{eqn:descentlemma}, the second from~\eqref{eqn:propr_proj2}, the third from $\alpha \le \alpha_*$ and the definition of $\alpha_*$.
\end{proof}

\medskip
Now we can conclude that, for all points in a neighborhood of a given solution $\bar x$, the backtracking procedure terminates in a number of iterations which is bounded above by a constant (that may depend on $\bar{x}$ itself). 

\medskip
\begin{corollary}
\label{coro:PGDmapTerminationLipschitzContinuousGradient}
Consider Algorithm~\ref{alg:ggmm} under $a \ge 1$, $\tau>0$ and assume that the stopping condition of Algorithm~\ref{alg:ggmm} is based on P-stationarity. Assume $f$ is continuously differentiable with $\nabla f$ locally Lipschitz. Given $\bar x \in D$ and $\rho \in (0, \infty)$, let $\bar \rho$ and $\alpha_*$ be as in Proposition~\ref{prop:ArmijoConditionLipschitzContinuousGradient}.
Then, for every $x \in B(\bar x, \rho) \cap D$, the backtracking in Lines~\ref{line:linesearch_start}--\ref{line:linesearch_end} terminates with a step size $\alpha \in [\delta\min\{\tau, \alpha_*\}, \alpha_{\max}]$ and hence after at most $\max\{0, \lceil\log_{1/\delta}(\min\{\tau,\alpha_*\}/\alpha_0)\rceil\}$ iterations, where $\alpha_0\in[\alpha_{\min},\alpha_{\max}]$ is the initial step size chosen in Line~\ref{line:initialparameters}.
\end{corollary}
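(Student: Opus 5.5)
The argument combines Proposition~\ref{prop:ArmijoConditionLipschitzContinuousGradient} with the geometric structure of the backtracking sequence. Fix $x = x_k \in B(\bar x,\rho)\cap D$ and let $\alpha_0 = \alpha_{k0}\in[\alpha_{\min},\alpha_{\max}]$ be the initial step chosen in Line~\ref{line:initialparameters}. The trial steps are then $\alpha_{kj} = \delta^j\alpha_0$. I set $\hat\alpha := \min\{\tau,\alpha_*\}$.

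First I apply Proposition~\ref{prop:ArmijoConditionLipschitzContinuousGradient} with $s = x_k - x_{k-1}$, $\beta_0 = \beta_{k0}$, and $\alpha_0 = \alpha_{k0}$. This gives that every trial step $\alpha_{kj} \le \hat\alpha$ satisfies
\[
f(y_{kj}) \le f(x_k) - \frac{c}{2\alpha_{kj}}\|y_{kj}-x_k\|^2 ,
\]
and this holds for \emph{any} selection $y_{kj}$ of the (set-valued) projection. Since $\mu_k \ge f(x_k)$, the stopping test of the inner loop at Lines~\ref{line:linesearch_start}--\ref{line:linesearch_end} is met at every such step. The test therefore fails only when $\alpha_{kj} > \hat\alpha$.

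There are two cases. If $\alpha_0 \le \hat\alpha$, the loop stops at $j=0$, giving $\alpha = \alpha_0 \ge \alpha_{\min} > \tau \ge \delta\hat\alpha$. In fact this case can occur only when $\hat\alpha = \alpha_*$, because $\alpha_0 > \tau$. Otherwise, let $\bar j$ be the smallest index with $\delta^{\bar j}\alpha_0 \le \hat\alpha$. By the previous paragraph the loop terminates at some $j \le \bar j$, and the final step is $\delta^j\alpha_0 \ge \delta^{\bar j}\alpha_0$. Minimality of $\bar j$ gives $\delta^{\bar j-1}\alpha_0 > \hat\alpha$, so $\delta^{\bar j}\alpha_0 > \delta\hat\alpha$. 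In both cases the accepted step lies in $[\delta\hat\alpha,\alpha_{\max}]$, where the upper bound holds trivially since $\alpha \le \alpha_0 \le \alpha_{\max}$.

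For the iteration count, I solve $\delta^{j}\alpha_0 \le \hat\alpha$ for $j$. This gives $\bar j = \lceil \log_{1/\delta}(\alpha_0/\hat\alpha)\rceil$ when $\alpha_0 > \hat\alpha$, and $\bar j = 0$ otherwise, which yields a bound of the form $\max\{0,\lceil\log_{1/\delta}(\alpha_0/\min\{\tau,\alpha_*\})\rceil\}$. The statement writes the ratio as $\min\{\tau,\alpha_*\}/\alpha_0$; I will check the sign convention against the derivation, but the substance is this logarithmic count, which is uniform over $x \in B(\bar x,\rho)\cap D$. No step is a real obstacle. The only care needed is to note that the proposition's $\bar\rho$ and $\alpha_*$ are independent of $x$, $\alpha_0$, $\beta_0$ and $s$, which is what makes the bound uniform in the neighborhood.
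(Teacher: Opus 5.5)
Your proposal is correct and is essentially the paper's argument. Apply Proposition~\ref{prop:ArmijoConditionLipschitzContinuousGradient} with $s=x_k-x_{k-1}$ (and $\mu_k\ge f(x_k)$) to every trial step $\delta^j\alpha_0\le\min\{\tau,\alpha_*\}$, then take the first such index, which also gives the lower bound $\delta\min\{\tau,\alpha_*\}$ on the accepted step. You can drop the hedge on the count: since $\min\{\tau,\alpha_*\}\le\tau<\alpha_{\min}\le\alpha_0$, the printed ratio (inverted also in the paper's own proof) would make the bound identically $0$, so the correct count is your $\lceil\log_{1/\delta}(\alpha_0/\min\{\tau,\alpha_*\})\rceil$; the same inequality shows your case $\alpha_0\le\min\{\tau,\alpha_*\}$ never occurs at all (not merely only when the minimum is $\alpha_*$), which is harmless.
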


\medskip
\begin{proof}
Let $\bar j \in \mathbb N$ be the largest integer $j$ such that, defined $\alpha = \alpha_0 \delta^j$, $\alpha/\delta > \min\{\tau,\alpha_*\}$. Observe that  $\bar j < 1+\log_{1/\delta}(\min\{\tau,\alpha_*\}/\alpha_0)$.  By Proposition~\ref{prop:ArmijoConditionLipschitzContinuousGradient},  the while loop in Lines~\ref{line:linesearch_start}--\ref{line:linesearch_end} terminates after at most $\bar j$ iterations,  i.e., after at most $\max\{0, \lceil\log_{1 /\delta}(\min\{\tau,\alpha_*\}/\alpha_0)\rceil\}$ iterations.
\end{proof}

\medskip
The previous results allow us to finally retrieve the last convergence result of this paper.

\medskip
\begin{theorem}
\label{theo:conv_P}
Let $a\ge1,\tau>0$. Assume that the stopping condition of Algorithm~\ref{alg:ggmm} is based on P-stationarity.  Assume that $f$ is continuously differentiable with $\nabla f$ locally Lipschitz. Further assume
Assumption~\ref{ass:rule}. Then, if $x_k$ is not a P-stationary point for every $k$, then
any accumulation point  $\bar x$ of the sequence $\{x_k\}$ generated by  Algorithm~\ref{alg:ggmm} is a P-stationary point for problem~\eqref{eqn:prob}. Moreover, for any converging subsequence $\{x_k\}_{k \in K}$, 
\begin{equation}
\label{eqn:conv_dist}
\lim_{k \in K, k \to \infty} d_{\widehat{ \mathcal N}_D^P(x_{k+1})}(- \nabla f(x_{k+1}))=0.
\end{equation}

\end{theorem}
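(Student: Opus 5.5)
Fix an accumulation point $\bar x$ and a subsequence $\{x_k\}_{k\in K}$ converging to it. I will use Proposition~\ref{prop:conv_diff}; its hypothesis that every inner loop terminates holds because Corollary~\ref{coro:PGDmapTerminationLipschitzContinuousGradient}, applied with $\bar x=x_k$ and any $\rho>0$, gives finite termination at each iterate. Hence $\|x_{k+1}-x_k\|\to0$, so $x_{k+1}\to\bar x$ and $x_{k-1}\to\bar x$ along $K$. Next, fix $\rho>0$ and take $\bar\rho,\alpha_*$ as in Proposition~\ref{prop:ArmijoConditionLipschitzContinuousGradient}. For $k\in K$ large, $x_k\in B(\bar x,\rho)\cap D$, and Corollary~\ref{coro:PGDmapTerminationLipschitzContinuousGradient} gives $\alpha_k\in[\underline\alpha,\alpha_{\max}]$ with $\underline\alpha:=\delta\min\{\tau,\alpha_*\}>0$, uniformly in $k$. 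Passing to a further subsequence, $\alpha_k\to\alpha_{\lim}\in[\underline\alpha,\alpha_{\max}]$ and $\beta_k\to\beta_{\lim}\in[0,\beta_{\max}]$.

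P-stationarity of $\bar x$ then follows exactly as in the proof of Theorem~\ref{theo:conv_eps>0}. We have $\|x_k-\alpha_k\nabla f(x_k)+\beta_k(x_k-x_{k-1})-x_{k+1}\|=d_D(x_k-\alpha_k\nabla f(x_k)+\beta_k(x_k-x_{k-1}))$. Letting $k\to\infty$ in $K$, using continuity of $d_D$ and of $\nabla f$ and the fact that the momentum term vanishes (since $\beta_k$ is bounded and $x_k-x_{k-1}\to0$), yields $\alpha_{\lim}\|\nabla f(\bar x)\|=d_D(\bar x-\alpha_{\lim}\nabla f(\bar x))$. By Remark~\ref{obs:equiv_prox} this gives $\bar x\in P_D(\bar x-\alpha_{\lim}\nabla f(\bar x))$, i.e.\ $-\nabla f(\bar x)\in\widehat{\mathcal N}^P_D(\bar x)$. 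No contradiction argument is needed here, which is the advantage of the Lipschitz setting over Remark~\ref{obs:fail_prop}.

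For \eqref{eqn:conv_dist}, I would construct explicit proximal normals at $x_{k+1}$. Since $x_{k+1}\in P_D(z_k)$ with $z_k:=x_k-\alpha_k\nabla f(x_k)+\beta_k(x_k-x_{k-1})$, Definition~\ref{def:prox_normal} (with $\bar\alpha=1$) shows $v_k:=z_k-x_{k+1}\in\widehat{\mathcal N}^P_D(x_{k+1})$. Because the proximal normal cone is a cone, $v_k/\alpha_k$ is also a proximal normal, with
\[
\frac{v_k}{\alpha_k}=-\nabla f(x_k)+\frac{\beta_k}{\alpha_k}(x_k-x_{k-1})-\frac{x_{k+1}-x_k}{\alpha_k}.
\]
Therefore
\[
d_{\widehat{\mathcal N}^P_D(x_{k+1})}(-\nabla f(x_{k+1}))\le\|\nabla f(x_k)-\nabla f(x_{k+1})\|+\frac{\beta_{\max}}{\underline\alpha}\|x_k-x_{k-1}\|+\frac{\|x_{k+1}-x_k\|}{\underline\alpha}.
\]
Each term tends to zero along $K$: the first by continuity of $\nabla f$ at $\bar x$, the other two by Proposition~\ref{prop:conv_diff} together with $\alpha_k\ge\underline\alpha$.

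The main obstacle is the uniform lower bound $\alpha_k\ge\underline\alpha$ along $K$. It comes from Corollary~\ref{coro:PGDmapTerminationLipschitzContinuousGradient}, which relies on the momentum being switched off for $\alpha\le\tau$. Two points need care. First, the corollary guarantees the nonmonotone test with $\mu_k$ in place of $f(x_k)$; this is fine because $\mu_k\ge f(x_k)$. Second, the backtracking must actually reach the safe region: each visited $\alpha$ is either accepted earlier, hence at least $\underline\alpha$, or eventually lands in $[\delta\min\{\tau,\alpha_*\},\min\{\tau,\alpha_*\}]$, where acceptance is guaranteed.
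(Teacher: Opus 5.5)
Your proposal is correct and follows essentially the same route as the paper: the uniform lower bound $\alpha_k\ge\delta\min\{\tau,\alpha_*\}$ from Corollary~\ref{coro:PGDmapTerminationLipschitzContinuousGradient}, compactness and passage to the limit in the distance identity to get $\bar x\in P_D(\bar x-\bar\alpha\nabla f(\bar x))$, and then the explicit proximal normal at $x_{k+1}$ plus the triangle inequality for \eqref{eqn:conv_dist}. You also check that every inner loop terminates, by applying the corollary at each $x_k$, which is a hypothesis of Proposition~\ref{prop:conv_diff} that the paper leaves implicit.
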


\begin{proof}
Let $\bar x\in D$ be an accumulation point of $\{x_k\}$. Let $K \subset \mathbb N$ such that $x_k\to \bar{x}$ for $k\in K$, $k\to\infty$.  Given $\rho \in (0, \infty)$, let $\bar{\rho}$ and $\alpha_*$ be as in Proposition~\ref{prop:ArmijoConditionLipschitzContinuousGradient} and also let
$I \coloneq \left[\delta\min\{{\tau }, \alpha_*\}, \alpha_{\max}\right]$.
There exists $k_* \in \N$ such that, for all integers $k \ge k_*, k \in K$, $x_k \in B(\bar x, \rho)$, thus, by Corollary~\ref{coro:PGDmapTerminationLipschitzContinuousGradient}, $x_{k+1} \in P_D(x_k-\alpha_k\nabla f(x_k)+\beta_k(x_k-x_{k-1}))$ with $\alpha_k \in I$ and $\beta_k \in [0,\beta_{\max}]$, and hence
\begin{equation}
\label{eqn:dist_D}
\|x_{k+1}-(x_k-\alpha_k\nabla f(x_k)+\beta_k(x_k-x_{k-1}))\| = d_D(x_k-\alpha_k\nabla f(x_k)+\beta_k(x_k-x_{k-1})).
\end{equation}
Since $I$ and $[0,\beta_{\max}]$ are compact, there exist an infinite subset $K'\subset K$, and two subsequences $\{\alpha_k\}_{k \in K'}$ and $\{\beta_k\}_{k \in K'}$ converging to $\bar\alpha \in I$ and $\bar\beta \in [0,\beta_{\max}]$, respectively. Furthermore, by Proposition~\ref{prop:conv_diff}, $\{x_{k+1}\}_{k \in K'}$ and $\{x_{k-1}\}_{k \in K'}$ converge to $\bar x$. Therefore, taking the limit for $k \to \infty$, with $k \in K'$, in~\eqref{eqn:dist_D}, we obtain
\begin{equation*}
\|\bar x-(\bar x-\bar\alpha\nabla f(\bar x)+\bar\beta (\bar x-\bar x))\| = d_D(\bar x-\bar\alpha\nabla f(\bar x)+\bar\beta (\bar x-\bar x)),
\end{equation*}
i.e.,
\begin{equation*}
\bar\alpha\|\nabla f(\bar x)\| = d_D(\bar x-\bar\alpha\nabla f(\bar x)).
\end{equation*}
It follows that $\bar x \in P_D(\bar x-\bar\alpha\nabla f(\bar x))$, which implies, from Remark~\ref{obs:equiv_prox},  that $-\nabla f(\bar x) \in \widehat{\mathcal N}_D^P(\bar x)$.

We now prove~\eqref{eqn:conv_dist}. Remember that for all $k \ge k_*$, $x_{k+1} \in P_D(x_k-\alpha_k\nabla f(x_k)+\beta_k(x_k-x_{k-1}))$, with $\alpha_k \in I$, and $\beta_k \in [0, \beta_{\max}]$. Then, $\frac{1}{\alpha_k}(x_k-x_{k+1})+\frac{\beta_k}{\alpha_k}(x_k-x_{k-1})- \nabla f(x_k) \in \widehat{\mathcal N}_D^P(x_{k+1})$. Thus,
\begin{align*}
d_{\widehat{ \mathcal N}_D^P(x_{k+1})}(- \nabla f(x_{k+1}))
&\le \big\| - \nabla f(x_{k+1})- \big( \frac{1}{\alpha_k}(x_k-x_{k+1})+\frac{\beta_k}{\alpha_k}(x_k-x_{k-1})- \nabla f(x_k) \big) \big\|\\
&\le \frac{1}{\alpha_k}\|x_k- x_{k+1}\| + \frac{\beta_k}{\alpha_k}\|x_k- x_{k-1}\| + \|\nabla f(x_{k+1}) - \nabla f(x_k) \|, 
\end{align*}
which converges to $0$ for $k \in K$ and $k \to \infty$ since $\{x_k\}_{k \in K}$, $\{x_{k+1}\}_{k \in K}$ and $\{x_{k-1}\}_{k \in K}$ converge to $ \bar x$, $\{\alpha_k\}_{k \in K}$ is bounded away from zero, and $\beta_k\in [0,\beta_{\max}]$ for all $k$.
\end{proof}

\medskip
As in~\cite{olikier2023first}, we consequently obtain, for bounded sequences $\{x_k\}$, the property stated in the following corollary. The boundedness assumption is easily verified, e.g., if the sublevel set $\mathcal L_0=\{x \in D \mid f(x) \le f(x_0)\}$ is bounded from below.

\medskip
\begin{corollary}
\label{cor:conv_dist}
Let $a\ge1,\tau>0$. Assume that the stopping condition of Algorithm~\ref{alg:ggmm} is based on P-stationarity.  Assume that $f$ is continuously differentiable with $\nabla f$ locally Lipschitz. Further assume
Assumption~\ref{ass:rule}. 
If the sequence $\{x_k\}$ generated by Algorithm~\ref{alg:ggmm} is bounded and, for every $k$, $x_k$ is not a P-stationary point, then all accumulation points of $\{x_k\}$ are P-stationary points for problem~\eqref{eqn:prob}. Moreover, they attain the same value of $f$ and
\begin{equation}
\label{eqn:conv_dist_cor}
\lim_{k \to \infty} d_{\widehat{ \mathcal N}_D^P(x_{k})}(- \nabla f(x_{k}))=0.
\end{equation}
\end{corollary}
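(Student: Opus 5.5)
The statement has three parts: P-stationarity of accumulation points, equality of the limit values of $f$, and the full-sequence limit \eqref{eqn:conv_dist_cor}. I will derive each from Theorem~\ref{theo:conv_P}, Proposition~\ref{prop:conv_diff} and a compactness argument.

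\textbf{P-stationarity.} Since $\{x_k\}$ is bounded, it has accumulation points. Every such point is P-stationary directly by Theorem~\ref{theo:conv_P}. Boundedness also ensures that the hypothesis of Proposition~\ref{prop:conv_diff} (existence of an accumulation point) holds, so $\|x_{k+1}-x_k\|\to 0$.

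\textbf{Common value of $f$.} I will show that $\{f(x_k)\}$, or at least $\{\mu_k\}$, converges to a single limit $f^*$.

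Under the max-rule, the argument in the proof of Proposition~\ref{prop:conv_diff} (following \cite{grippo1986nonmonotone,jia2023augmented}) shows that the sequence $\max_{i\in\{\ell(k)\}} f(x_i)$ is nonincreasing and bounded below. Together with uniform continuity and $\|x_{k+1}-x_k\|\to0$, this yields $f(x_k)\to f^*$. I will reuse the appendix argument here.

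Under the average-rule, $\{\mu_k\}$ is nonincreasing by the sufficient decrease condition. It is bounded below, because $f$ is continuous and the $x_k$ lie in a compact set. Hence $\mu_k\to\mu^*$. The sufficient decrease gives
$$f(x_{k+1})\le \mu_k - \frac{c}{2\alpha_k}\|x_{k+1}-x_k\|^2 \le \mu_k .$$
Combined with $\mu_{k+1}=(1-p)\mu_k+pf(x_{k+1})$, this gives $p(\mu_k-f(x_{k+1}))=\mu_k-\mu_{k+1}\to0$, so $f(x_k)\to\mu^*$.

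In either case, continuity of $f$ implies that every accumulation point $\bar x$ satisfies $f(\bar x)=f^*$.

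\textbf{Full-sequence limit \eqref{eqn:conv_dist_cor}.} I argue by contradiction. Suppose there exist $\varepsilon>0$ and an infinite set $K_1$ with
$$d_{\widehat{\mathcal N}_D^P(x_{k})}(-\nabla f(x_{k}))\ge\varepsilon \quad \text{for all } k\in K_1.$$
Shift the indices: set $K=\{k-1 : k\in K_1\}$, so the quantity is evaluated at $x_{k+1}$ for $k\in K$. Since $\{x_k\}_{k\in K}$ is bounded, I extract a further subsequence $K'\subseteq K$ along which $x_k\to\bar x$. Applying \eqref{eqn:conv_dist} of Theorem~\ref{theo:conv_P} on $K'$ gives $d_{\widehat{\mathcal N}_D^P(x_{k+1})}(-\nabla f(x_{k+1}))\to0$ along $K'$, which contradicts the lower bound $\varepsilon$.

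The main subtlety is exactly this index shift. Theorem~\ref{theo:conv_P} controls the distance at $x_{k+1}$ along subsequences chosen so that $x_k$ converges, whereas \eqref{eqn:conv_dist_cor} concerns the distance at $x_k$ itself. The shift resolves this, because $K'$ is chosen so that its own $x_k$ converge, and Theorem~\ref{theo:conv_P} requires only that. The remaining technical point is the max-rule case of the common value of $f$, which relies on reproducing the appendix argument for Proposition~\ref{prop:conv_diff}.
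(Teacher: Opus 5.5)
Your proposal is correct and follows essentially the argument the paper defers to (\cite[Prop.~6.4]{olikier2023first}): convergence of $\{f(x_k)\}$ plus continuity of $f$ gives the common value, and the full-sequence limit follows by contradiction, using compactness, the index shift, and the subsequential limit \eqref{eqn:conv_dist} of Theorem~\ref{theo:conv_P}. Your explicit treatment of the max-rule and average-rule cases fills in the nonmonotone detail that the paper leaves to the cited reference and the Appendix.
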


\begin{proof}
The proof is the same as in~\cite[Prop.~6.4]{olikier2023first}.
\end{proof}

\medskip
Thanks to~\eqref{eqn:conv_dist_cor}, we are guaranteed that, as long as the sequence ${x_k}$ remains bounded, for every $\epsilon>0$ there exists $\bar{k}\in\mathbb{N}$ such that 
$$d_{\widehat{ \mathcal N}_D^P(x_{k})}(- \nabla f(x_{k})) < \epsilon$$
for all $k\geq \bar{k}$. This property justifies the use of this condition as a reliable stopping criterion for the algorithm.
In particular, it follows that the accumulation points cannot be \emph{serendipitous}, according to the definition in~\cite{levin2023finding} (see also~\cite{olikier2026lowrank}). Moreover, Theorem~\ref{theo:conv_eps>0} implies that no accumulation point $\bar{x}$ can be \emph{apocalyptic}, that is, there cannot exist a subsequence $\{x_k\}_{k\in K}$ converging to $\bar{x}$ such that $d_{\widehat{ \mathcal N}_D(x_{k})}
(- \nabla f(x_{k}))$ tends to $0$ on the subsequence $K$, while $d_{\widehat{ \mathcal N}_D(\bar x)}(- \nabla f(\bar x))>0$. Equivalently, no accumulation point can be M-stationary without also being B-stationary.

\section{Numerical Results}
\label{sec:experiments}

In this section, we present the results of computational experiments performed to assess the performance of the proposed approach.

The experimental analysis is divided into two parts, corresponding to two classes of optimization problems: cardinality-constrained problems and low-rank matrix completion problems. The results for each class are discussed separately in the following subsections.

Despite addressing different problem classes, both sets of experiments follow the same general experimental setup. We compare the proposed solver, the Geometric Gradient Method with Momentum (GGMM), with the (spectral) Projected Gradient Method (PGM)~\cite{jia2023augmented,olikier2025projected}. PGM serves as a natural baseline, as it relies solely on projected gradient directions without incorporating momentum or preconditioning mechanisms. In addition, we compare GGMM with a modified version of PANOC+ (P+)~\cite{de2022proximal}, in which the quasi-Newton search direction is replaced by a momentum-based direction, as described in Section~\ref{sec:panoc}.

The implementation of the proposed algorithms, together with the code used to reproduce the experiments, is publicly available at \href{https://github.com/DiegoScuppa/GGMM}{\tt https://github.com/DiegoScuppa/GGMM}. All experiments are conducted in a Python 3.13.5 environment on a machine equipped with an Intel(R) Xeon(R) Gold 5218 CPU running at 2.30~GHz, featuring 8 physical cores and 96~GB of RAM.

Results are presented in the form of performance profiles~\cite{dolan2004performance}. Specifically, we compare the three solvers on the following metrics:
\smallskip
\begin{enumerate}
    \item CPU time;
    \item Number of (outer) iterations;
    \item Number of function evaluations (which is equivalent to the number of inner iterations for GGMM and PGM, and to the double of this number for P+);
    \item Cumulative distribution of the relative gap of the final function value with respect to the best value found by any of the solvers.
\end{enumerate}
\smallskip
Complete results are available in the GitHub repository.

\subsection{Implementation details}

The implementation of GGMM follows the pseudocode reported in Algorithm~\ref{alg:ggmm}. We now discuss some implementation details. In particular, the pseudocode does not specify how the initial coefficients $\alpha_{k0}$ and $\beta_{k0}$ are chosen at each iteration $k$. In our implementation, these parameters are selected as follows.

Let $0 < \alpha_{\min} \leq \alpha_{\max} < \infty$, $\theta_{\min}\in[0,1]$, and $\tilde\beta_{\max}\geq 0$. At iteration $k>0$, define
$s_k=x_k-x_{k-1}$, and  $y_k=\nabla f(x_k)-\nabla f(x_{k-1})$.
If $\langle s_k,y_k\rangle>0$, the initial value of $\alpha_{k0}$ is chosen as 
$$
\alpha_{k0}=\max\bigg\{\alpha_{\min},\min\bigg\{\alpha_{\max},\frac{\|s_k\|^2}{\langle s_k,y_k\rangle}\bigg\}\bigg\},
$$
while $\beta_{k0}$ is computed as 
\begin{equation}
\label{eqn:beta_k}
\begin{split}
\beta_{k0}
=&\alpha_{k0}
\cdot\max\bigg\{\theta_{\min},
-\frac{\langle\nabla f(x_k),s_k\rangle}
{\|\nabla f(x_k)\|\|s_k\|}\bigg\} \\
&\cdot
\max\bigg\{0,\min\bigg\{\tilde\beta_{\max},
\frac{\langle\nabla f(x_k),y_k\rangle}{\langle s_k,y_k\rangle}
-2\frac{\|y_k\|^2\langle\nabla f(x_k),s_k\rangle}
{\langle s_k,y_k\rangle^2}
\bigg\}\bigg\}.
\end{split}
\end{equation}

Thus, $\alpha_{k0}$ is initialized using the safeguarded Barzilai--Borwein spectral steplength~\cite{barzilai1988twopoint}, whereas $\beta_{k0}$ is obtained by scaling $\alpha_{k0}$ by two factors. The first factor is a safeguarded measure of the alignment between the search direction $s_k$ and the negative gradient $-\nabla f(x_k)$, whereas the second is a safeguarded quantity inspired by the conjugate gradient parameter proposed by Hager and Zhang~\cite{hager2005cg}. If $\langle s_k, y_k \rangle \le 0$, $\alpha_{k0}$ and $\beta_{k0}$ are set to $1$ and $0$, respectively, as well as for iteration $k=0$. We observe that, in practice, this inner product is  rarely non-positive.

The implementation of PGM is identical to that of GGMM, except that $\beta_{k0}$ is fixed to zero at every iteration.

The parameters are set to
$\alpha_{\min}=10^{-6}$, $\alpha_{\max}=10^{6}$, $\tilde\beta_{\max}=10^{6}$, and $\theta_{\min}=10^{-2}$.
The remaining parameters of Algorithm~\ref{alg:ggmm} are chosen as
$a=1$, $\tau=10^{-7}$, $\delta=0.5$, and $c=10^{-5}$. 

A nonmonotone backtracking  based on the \textit{max-rule} is employed, with memory parameter $M=10$. Problem-dependent stopping criteria are adopted for the two classes of optimization problems considered. In addition, a time limit of $600$ seconds and a maximum of $10^9$ iterations are enforced as secondary stopping criteria in all experiments, unless otherwise stated.

\smallskip
\begin{remark}
This parameter setup (in particular, $\tau=10^{-7}$) for GGMM leads to using exact PGM steps only for extremely small stepsizes, so that the momentum term is usually actually exploited while we retain the stronger theoretical convergence properties. We observed that setups with $\tau=0$ lead to comparable results in practice; then, an open question remains regarding the convergence to B-stationarity for the case where alignment of the steps with the negative gradient occurs only asymptotically.    
\end{remark}

 \subsection{Globalization of Projected Gradient Method with Momentum via PANOC+ framework}
\label{sec:panoc}
To compare GGMM with an algorithm that explicitly exploits momentum, we analyze as an alternative a version of the momentum method globalized by means of the P+ framework (although only convergence to M-stationarity is guaranteed in this case). We refer the reader to~\cite[Algorithm~2]{de2022proximal} for the general P+ framework. Briefly, the method proceeds as follows: at iteration $k$, the algorithm maintains an infeasible point $x_k$ together with the projected feasible point
$$
\bar x_k \in P_D(x_k-\gamma_k\nabla f(x_k)) \subseteq D.
$$
The next iterate is then computed as
$$
x_{k+1}=(1-\tau_{k+1})\bar x_k+\tau_{k+1}(x_k+d_{k+1}),
$$
where $d_{k+1}\in\mathbb{R}^n$ is a search direction and $\tau_{k+1}\in(0,1]$ is determined by a backtracking procedure. As $\tau_{k+1}\to0$, the update reduces to the projected gradient step,
$$
x_{k+1}\in P_D(x_k-\gamma_k\nabla f(x_k)).
$$
Moreover, the steplength parameter $\gamma_{k}\in(0,\gamma_0]$ is updated through a second backtracking procedure and may be halved from one iteration to the next.

To isolate the effect of momentum within the PANOC+ framework, we define the search direction as
$$
d_{k+1}=\bar x_k-x_k+\tilde\beta_k(x_k-x_{k-1}),
$$
where the momentum parameter $\tilde\beta_k$ is computed analogously to~\eqref{eqn:beta_k}. With this choice, the update becomes
$$
x_{k+1} \in  P_D(x_k-\gamma_k\nabla f(x_k))
+\tau_{k+1}\tilde\beta_k(x_k-x_{k-1}),
$$
which resembles the GGMM iteration, except that the momentum term is applied after the projection rather than being incorporated within it. We remark that the resulting direction is not necessarily feasible; however, this does not represent an issue, since P+ does not require the sequence of iterates $\{x_k\}$ to remain feasible. Feasibility is instead enforced through the projected point $\bar x_k$, which is the point used to evaluate the stationarity measure by means of the same stopping criteria adopted for GGMM and PGM.  

The original Julia implementation\footnote{available at  \href{https://github.com/JuliaFirstOrder/ProximalAlgorithms.jl}{\tt https://github.com/JuliaFirstOrder/ProximalAlgorithms.jl}}  was adapted to Python and modified to incorporate the proposed momentum-based direction while preserving the original algorithmic setup. In particular, the initial steplength $\gamma_0$ is chosen as an approximation of the inverse Lipschitz constant, and the algorithm terminates whenever the backtracking procedure reduces the steplength $\gamma_k$ below $10^{-7}$.

\subsection{Cardinality-constrained problems}

For the first class of test problems, we consider cardinality-constrained optimization problems. Specifically, we study problem~\eqref{eqn:prob} with $\mathbb{X}=\mathbb{R}^n$, feasible set
$$
D=\{x\in\mathbb{R}^n:\|x\|_0\le s\},
$$
for some $s\in\{1,\ldots,n\}$, and objective function $f$ given by one of the following:
\smallskip
\begin{enumerate}[a)]
\item a quadratic function
$$
f(x)=x^\top Qx+c^\top x,
$$
where $Q\in\mathbb{R}^{n\times n}$ is positive semidefinite and $c\in\mathbb{R}^n$;

\item the logistic regression loss
$$
f(x)=\frac{1}{m}\sum_{i=1}^m\log\left(1+\exp\left(-y_i(Mx)_i\right)\right),
$$
where $M\in\mathbb{R}^{m\times n}$ is the data matrix and $y\in\{-1,1\}^m$ is the vector of class labels.
\end{enumerate}
\smallskip
For this class of problems, we adopt the stopping criterion
$$
\|\nabla_S f(x)\|<\texttt{tol},\quad \text{where}\quad S=
\begin{cases}
\{1,\ldots,n\}, & \text{if }\|x\|_0<s, \\
\{i\in\{1,\ldots,n\}:x_i\neq0\}, & \text{if }\|x\|_0=s.
\end{cases}
$$
which corresponds to the (approximate) Basic Feasibility condition discussed in Example~\ref{ex:sparsity} (see also~\cite{beck2013sparsity}), i.e., we are considering B/P-stationarity, which coincide for this problem (see, e.g.,~\cite[Prop.~7.1]{olikier2025projected}). 
In all experiments, we set $\texttt{tol}=10^{-4}$, and we choose the initial point $x_0$ as the null vector.

\subsubsection{Quadratic problems}
We now present the numerical results for the quadratic test problems. We consider five values of $n\in\{50,100,200,500,1000\}$ and six values of $\mathrm{cond}\in\{10,100,500,1000,5000,10000\}$. For each combination of $(n,\mathrm{cond})$, we generate 10 random positive semidefinite matrices $Q\in\mathbb{R}^{n\times n}$ with condition number $\mathrm{cond}$, together with random vectors $c\in\mathbb{R}^n$, yielding a total of 300 test instances. In all experiments, the cardinality parameter is set to $s=n/10$.

The corresponding performance profiles are reported in Figure~\ref{fig:pp_QP}. All instances are solved successfully by both GGMM and the momentum-based variant of P+, whereas PGM fails to solve one instance within the prescribed time limit. P+ achieves the best CPU time on the largest fraction of instances ($55\%$), followed by GGMM ($33\%$) and PGM ($12\%$). In terms of iteration count, GGMM and P+ exhibit comparable performance and both outperform PGM. On the other hand, P+ requires fewer function evaluations than the other methods. Finally, GGMM consistently attains the lowest objective values, whereas P+ exhibits noticeably poorer performance in this respect. This suggests that the shorter CPU times of P+ are largely due to its tendency to terminate at solutions with higher objective values.

\begin{figure}[htbp]
  \centering
  
  \begin{subfigure}{0.475\textwidth}
    \centering
    \includegraphics[width=\textwidth]{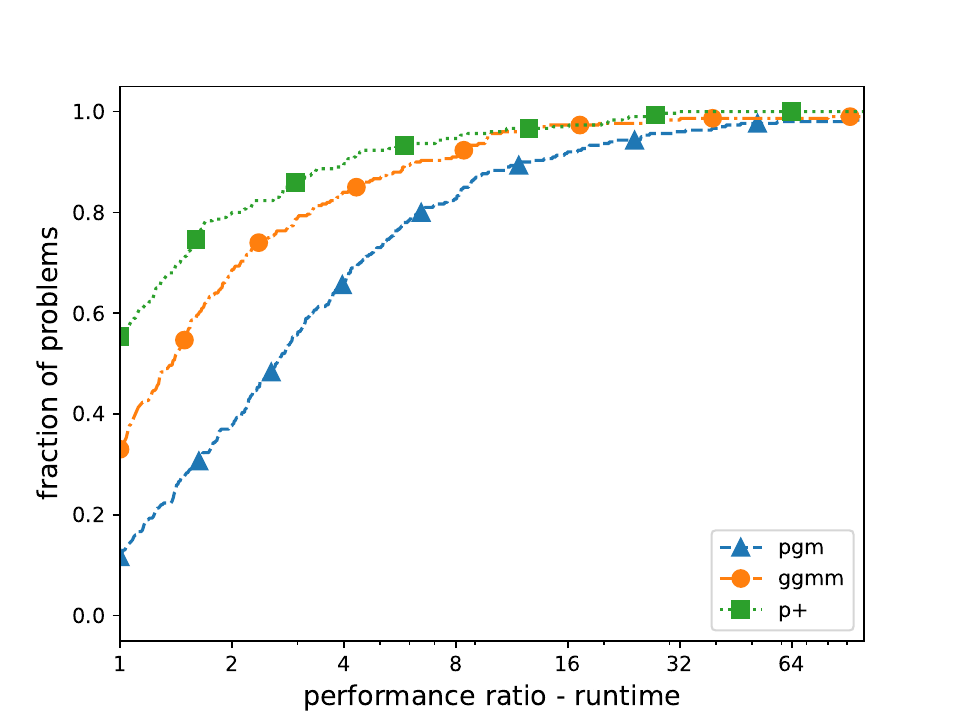}
  \caption{CPU time}
  \label{fig:pp_time_QP}
    \end{subfigure}
\hfill
\begin{subfigure}{0.475\textwidth}
    \centering
    \includegraphics[width=\textwidth]{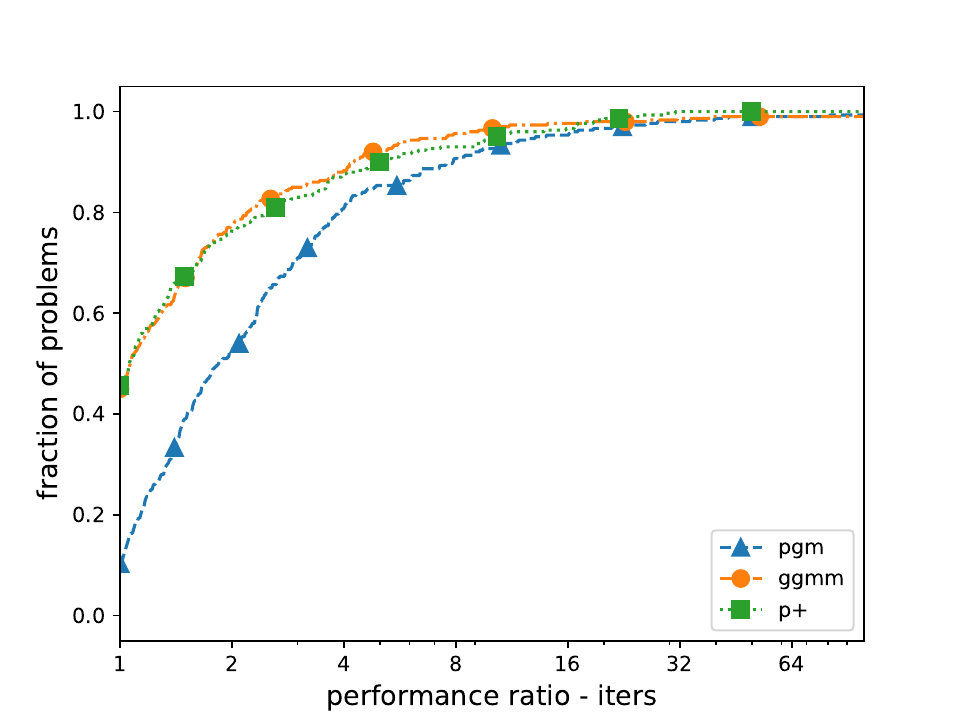}
  \caption{Number of iterations}
  \label{fig:pp_iters_QP}
\end{subfigure}

\begin{subfigure}{0.475\textwidth}
    \centering
    \includegraphics[width=\textwidth]{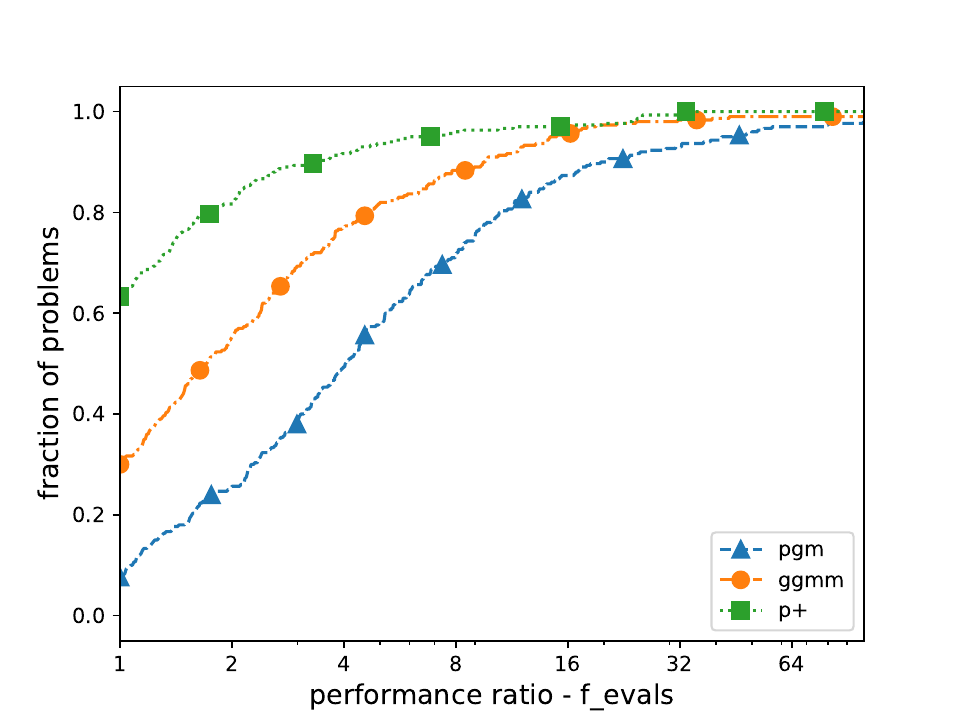}
  \caption{Number of function evaluations}
  \label{fig:pp_f_evals_QP}
    \end{subfigure}    
\hfill
\begin{subfigure}{0.475\textwidth}
    \centering
    \includegraphics[width=\textwidth]{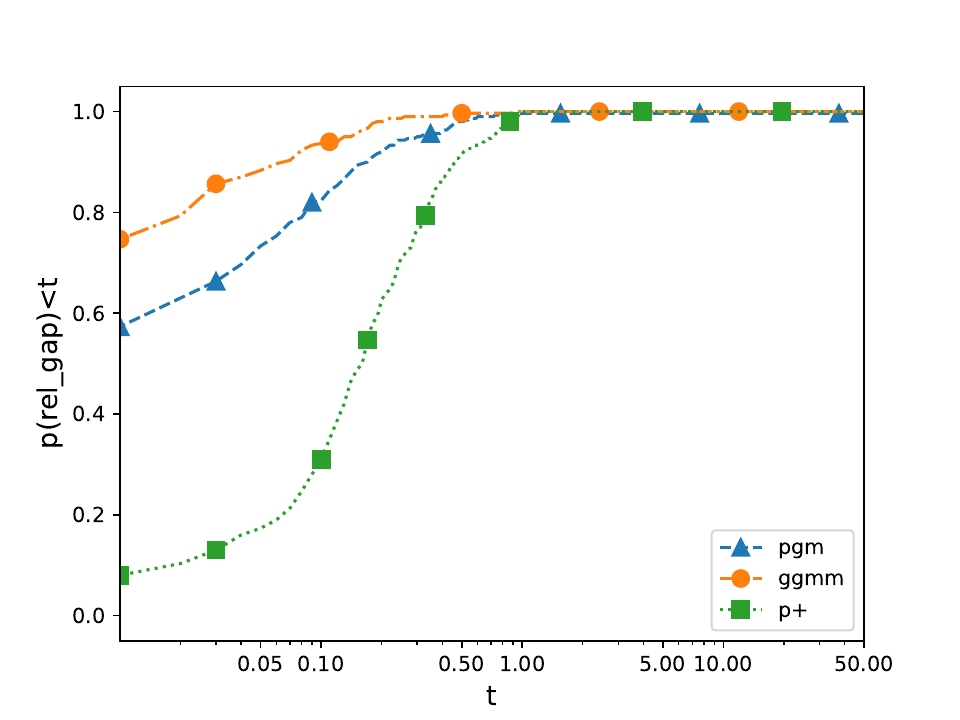}
  \caption{Final function value}
  \label{fig:pp_f_val_QP}
    \end{subfigure}

  \caption{Performance profiles for the quadratic problems.}
  \label{fig:pp_QP}
\end{figure}

\subsubsection{Logistic regression problems}
We now turn to the subclass of logistic regression problems. We consider 24 benchmark problems built from the datasets\footnote{all  available at \href{https://www.csie.ntu.edu.tw/~cjlin/libsvmtools/datasets/binary.html}{\tt https://www.csie.ntu.edu.tw/\~{}cjlin/libsvmtools/datasets/binary.html}} reported in Table \ref{tab:LRproblems}. From this repository, we selected all datasets with a number of variables $n \in [8,100000]$ and at most $100000$ samples. Among the datasets \texttt{a1a}--\texttt{a9a}, we considered only \texttt{a1a} and \texttt{a9a}; similarly, among the datasets \texttt{w1a}--\texttt{w9a}, we considered only \texttt{w1a} and \texttt{w9a}. For each dataset, we solve the cardinality-constrained problem with $s \in \{3,4,5,6,7,8\}$, yielding a total of 144 instances.

The corresponding performance profiles are reported in Figure~\ref{fig:pp_LR}. The time limit is reached on 13 instances by PGM, 14 instances by GGMM, and 17 instances by P+. Moreover, on 6 instances P+ terminates because the steplength parameter satisfies $\gamma_k<10^{-7}$. Overall, GGMM outperforms the other solvers according to all the considered performance metrics. In particular, GGMM achieves the lowest CPU time on $53\%$ of the instances, followed by PGM ($37\%$) and P+ ($3\%$). A similar ranking is observed in terms of the number of iterations and function evaluations. As for the quadratic problems, P+ generally converges to solutions with higher objective values than both PGM and GGMM, while the latter two methods attain comparable objective values.

\begin{table}[h]
\centering
\begin{tabular}{ l l l  l l l }
\hline
Problem & $n$ & $m$ & Problem & $n$ & $m$\\
\hline
\texttt{a1a}  & 123 & 1605 & \texttt{ionosphere}  & 34 & 351  \\

\texttt{a9a}  & 123 & 32561 & \texttt{leukemia}  & 7129 & 38  \\

\texttt{australian}  & 14 & 690 & \texttt{madelon}  & 500 & 2000  \\

\texttt{breast-cancer}  & 10 & 683 & \texttt{mushrooms}  & 112 & 8124   \\

\texttt{cod-rna}  & 8 & 59535 & \texttt{phishing} & 68 & 11055   \\

\texttt{colon-cancer}  & 2000 & 62 & \texttt{rcv1}  & 47236 & 20242   \\

\texttt{diabetes}  & 8 & 768 & \texttt{realsim}  & 20958 & 72309  \\

\texttt{duke}  & 7129 & 44 & \texttt{sonar} & 60 & 208  \\

\texttt{german-number}  & 24 & 1000 &   \texttt{splice} & 60 & 1000  \\

\texttt{gisette}  & 5000 & 6000  & \texttt{svmguide3}  & 21 & 1243 \\

\texttt{heart}  & 13 & 270 & \texttt{w1a} & 300 & 2477  \\

\texttt{ijcnn1}  & 22 & 49990 & \texttt{w8a} & 300 & 49749  \\

\hline
\end{tabular}
\caption{Logistic regression problems: $n$ denotes the number of variables, $m$ the number of samples.}
\label{tab:LRproblems}
\end{table}

\begin{figure}[htbp]
  \centering
  
  \begin{subfigure}{0.475\textwidth}
    \centering
    \includegraphics[width=\textwidth]{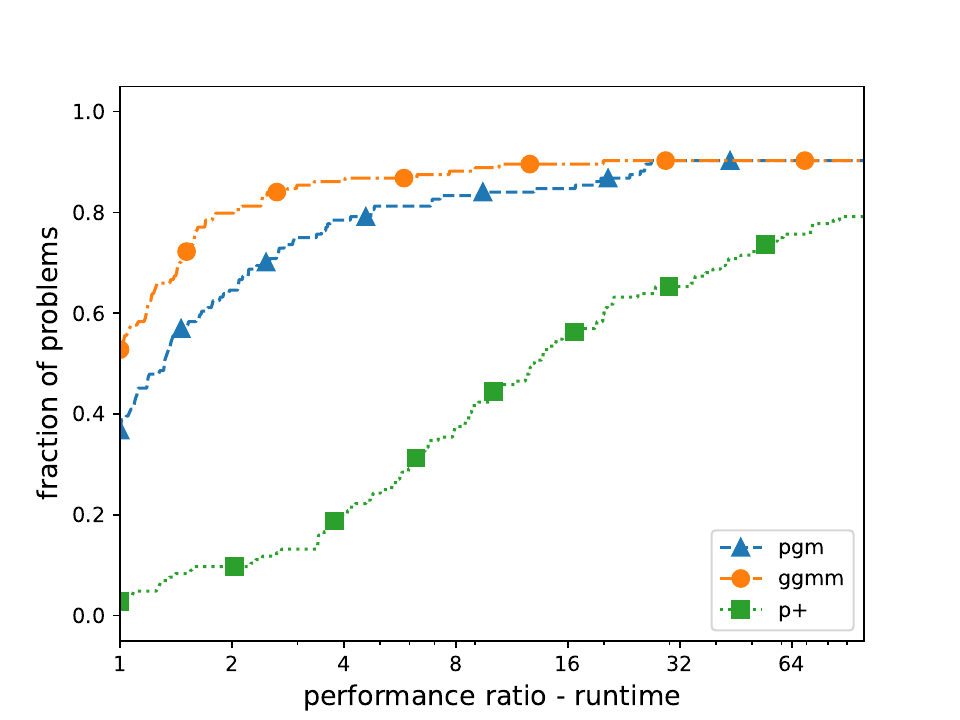}
  \caption{CPU time}
  \label{fig:pp_time_LR}
    \end{subfigure}
\hfill
\begin{subfigure}{0.475\textwidth}
    \centering
    \includegraphics[width=\textwidth]{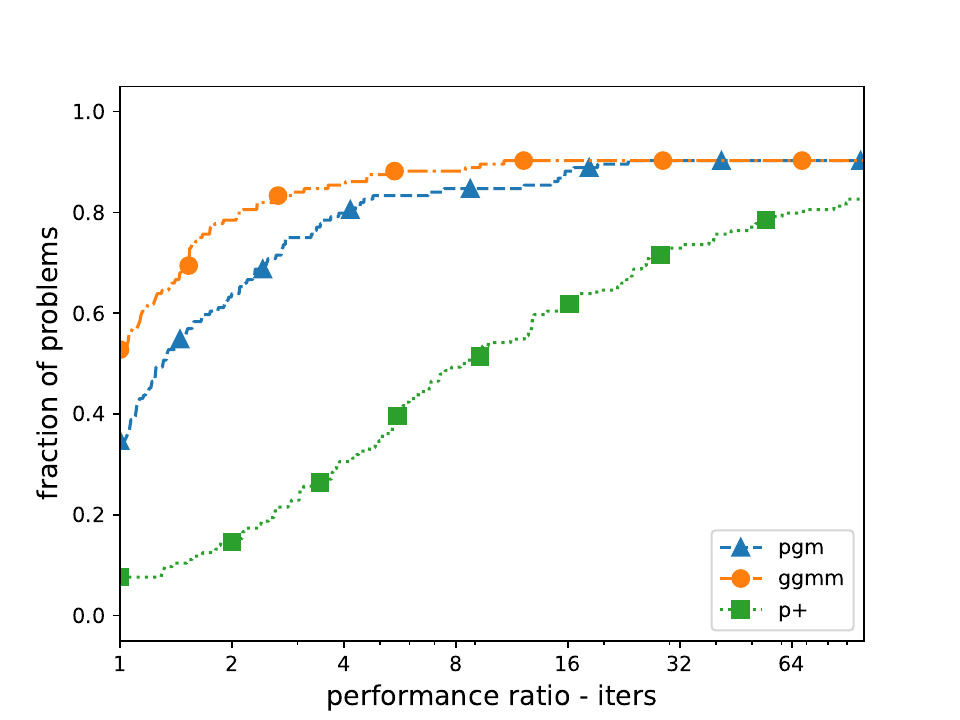}
  \caption{Number of iterations}
  \label{fig:pp_iters_LR}
\end{subfigure}

\begin{subfigure}{0.475\textwidth}
    \centering
    \includegraphics[width=\textwidth]{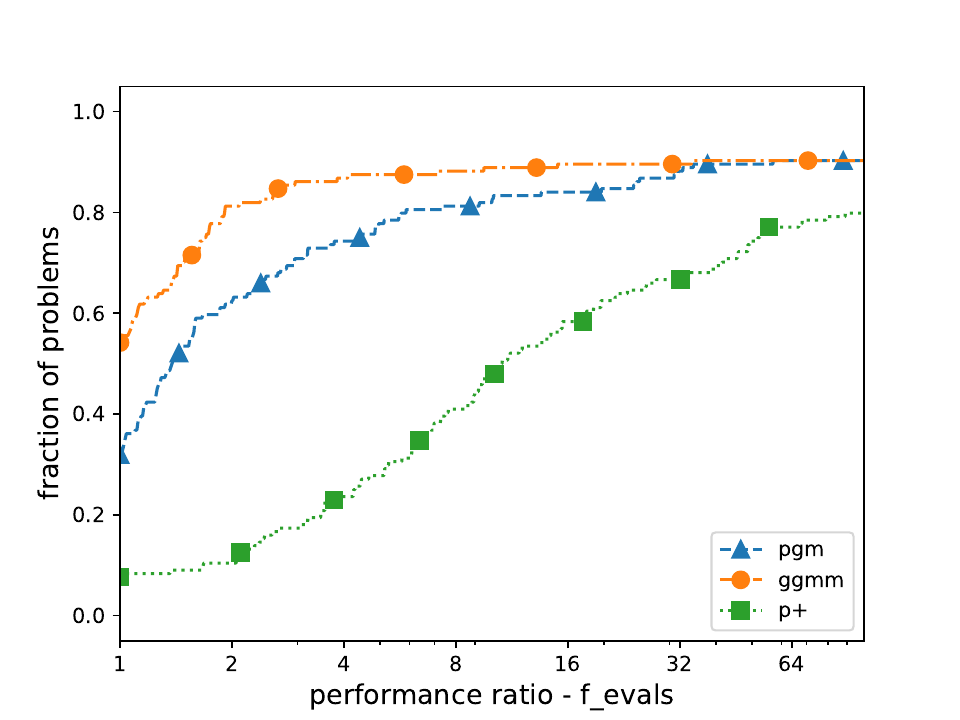}
  \caption{Number of function evaluations}
  \label{fig:pp_f_evals_LR}
    \end{subfigure}
\hfill
    \begin{subfigure}{0.475\textwidth}
    \centering
    \includegraphics[width=\textwidth]{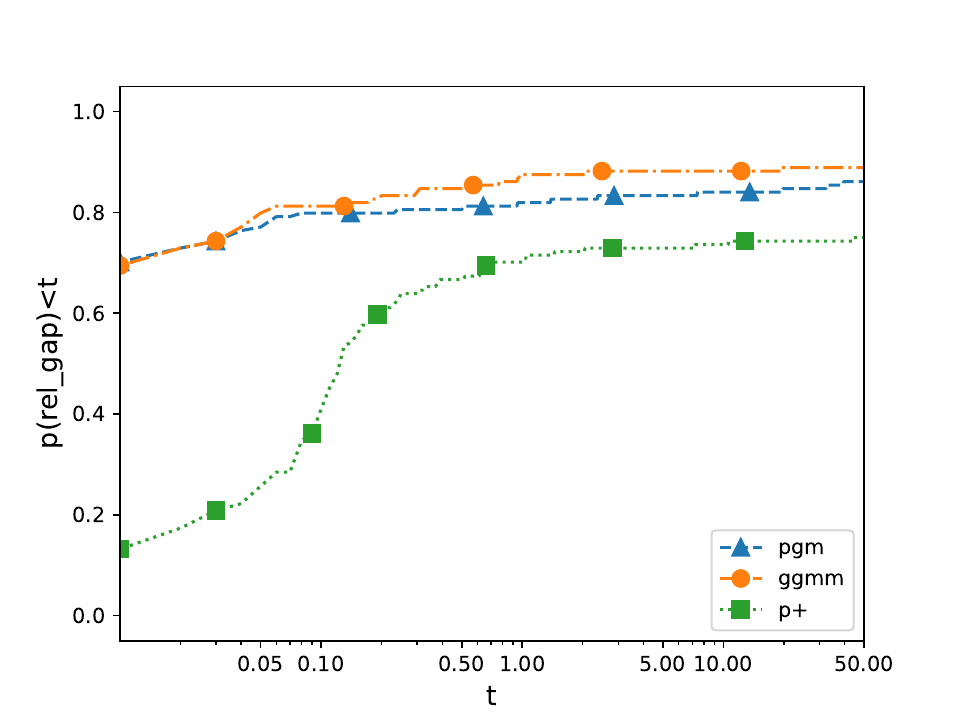}
  \caption{Final function value}
  \label{fig:pp_f_val_LR}
    \end{subfigure}

  \caption{Performance profiles for the logistic regression problems.}
  \label{fig:pp_LR}
\end{figure}

\subsection{Low-rank matrix completion problems}

We now discuss the results of the numerical experiments on the class of low-rank matrix completion problems. We consider problem~\eqref{eqn:prob} with $\mathbb X=\mathbb R^{n \times n}$ and
$$
D=\{X \in \mathbb R^{n \times n} : \text{rank}(X)\le s\},
$$
for some $s \in \{1,\dots,n\}$, and the objective function $f$ given by the loss function of the one-bit low-rank matrix completion problem~\cite{davenport2014matrix}. Specifically, we consider a sparse binary matrix $Y \in \{\pm 1\}^{n \times n}$ and assume that only a small number of its entries are known. Let $\Omega$ be the set of indices $(i,j) \in \{1,\dots,n\}\times \{1,\dots,n\}$ for which the entry $Y_{ij}\in \{\pm 1\}$ is available. The goal is to recover the unknown entries by computing a full matrix $X \in \mathbb R^{n \times n}$ that minimizes the negative log-likelihood over the known entries of $Y$:
$$
f(X)= \frac{1}{|\Omega|} \sum_{(i,j) \in \Omega} \log \big(1+\exp(-Y_{ij}X_{ij})\big).
$$

We consider six values of $n \in \{50,100,200,500,1000,2000\}$, five values of $s\in \{2,3,5,10,20\}$, and five values of $p \in \{1,2,5,10,20\}$. For each combination of $(n,s,p)$, we generate one matrix $M=UV^T \in \mathbb R^{n \times n}$ of rank $s$, where $U,V \in \mathbb R^{n \times s}$ are random matrices. Then, we define $\tilde Y=\text{sign}(M)$ and construct the observed matrix $Y$ by retaining only a fraction $p/100$ of the entries of $\tilde Y$ and masking the remaining ones. This procedure results in a total of 300 instances.

For this class of problems, we consider the following stopping criterion:
$$
\big\|P_{T_D(X)}\big(- \nabla f(X)\big)\big\| < \texttt{tol}.
$$
As presented in~\cite{olikier2025projected}, this condition provides an (approximate) measure of B-stationarity, which, also in this case, coincides with P-stationarity. Here, $P_{T_D(X)}$ denotes the orthogonal projector onto the tangent space of $D$ at the point $X\in D$. Observe that the projection of a matrix $Z$ on $T_D(X)$ can be computed efficiently as
\[
P_{T_D(X)}(Z)=UU^\top Z + Z VV^\top - UU^\top Z VV^\top,
\]
where $U,V \in \mathbb R^{n \times s}$ are the matrices obtained from the SVD of $X = U \Sigma V^T$, where $\Sigma \in \mathbb R^{s \times s}$ is the diagonal matrix of the largest $s$ singular values.

In our experiments, we set $\texttt{tol}=10^{-5}$, and we set the initial point $X_0$ as the null matrix. 

The corresponding performance profiles are reported in Figure~\ref{fig:pp_MC}. All instances are solved by PGM, whereas GGMM hits the time limit on one instance and P+ on 21 instances. GGMM achieves the best CPU time on $93\%$ of the instances, while PGM does so on $7\%$ of the instances and P+ on none. A similar behavior is observed in terms of the number of iterations and function evaluations. Moreover, the final objective values obtained by GGMM are significantly lower than those achieved by PGM and P+.

\begin{figure}[htbp]
  \centering
  
  \begin{subfigure}{0.475\textwidth}
    \centering
    \includegraphics[width=\textwidth]{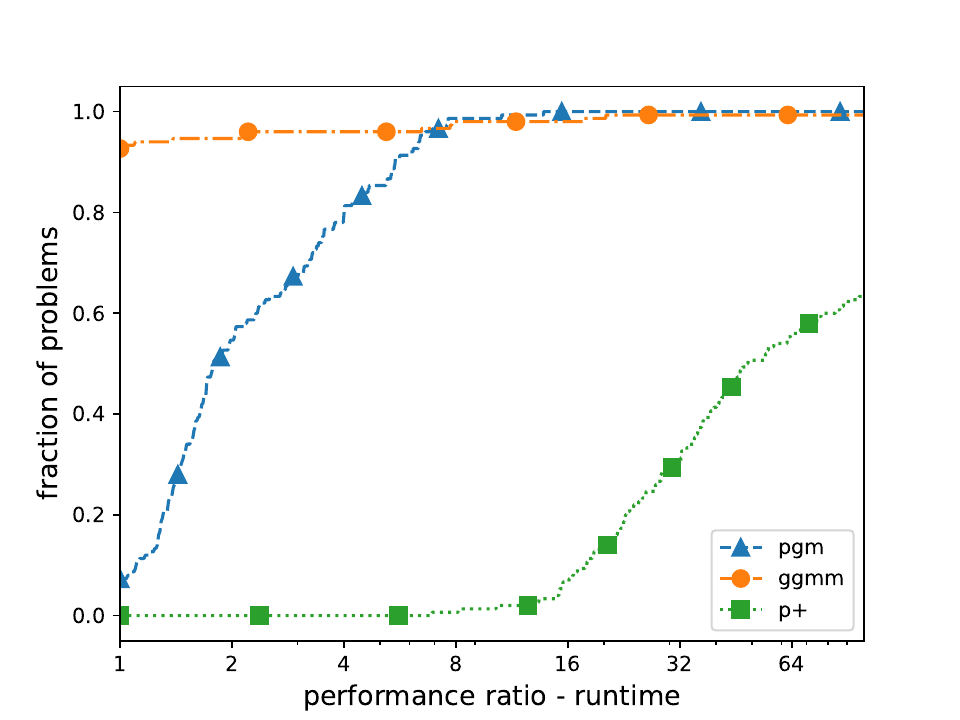}
  \caption{CPU time}
  \label{fig:pp_time_MC}
    \end{subfigure}
\hfill
\begin{subfigure}{0.475\textwidth}
    \centering
    \includegraphics[width=\textwidth]{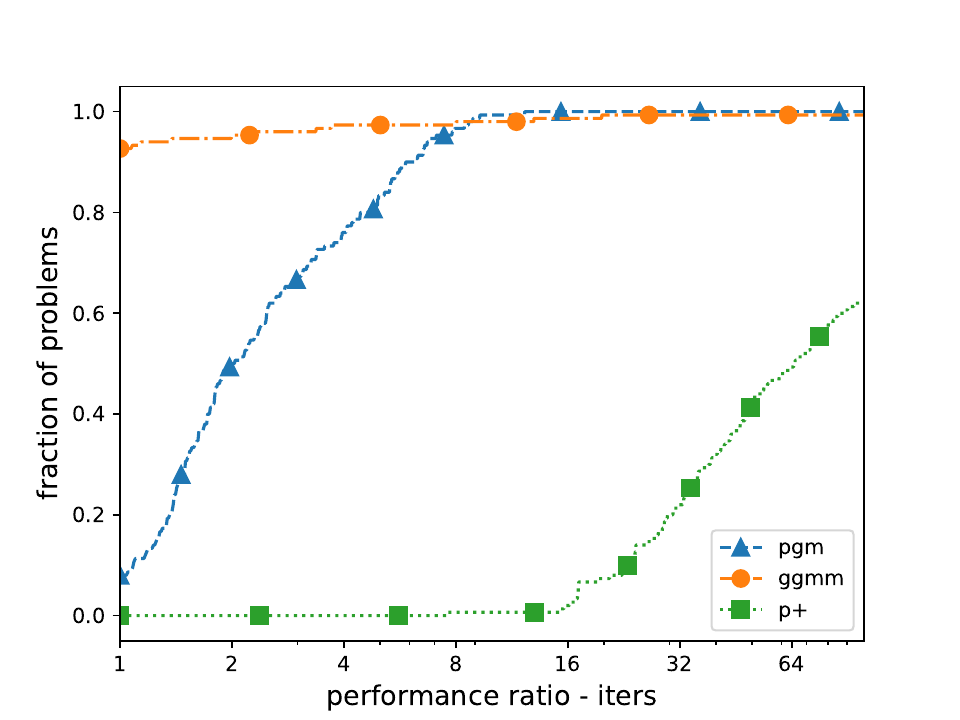}
  \caption{Number of iterations}
  \label{fig:pp_iters_MC}
\end{subfigure}

\begin{subfigure}{0.475\textwidth}
    \centering
    \includegraphics[width=\textwidth]{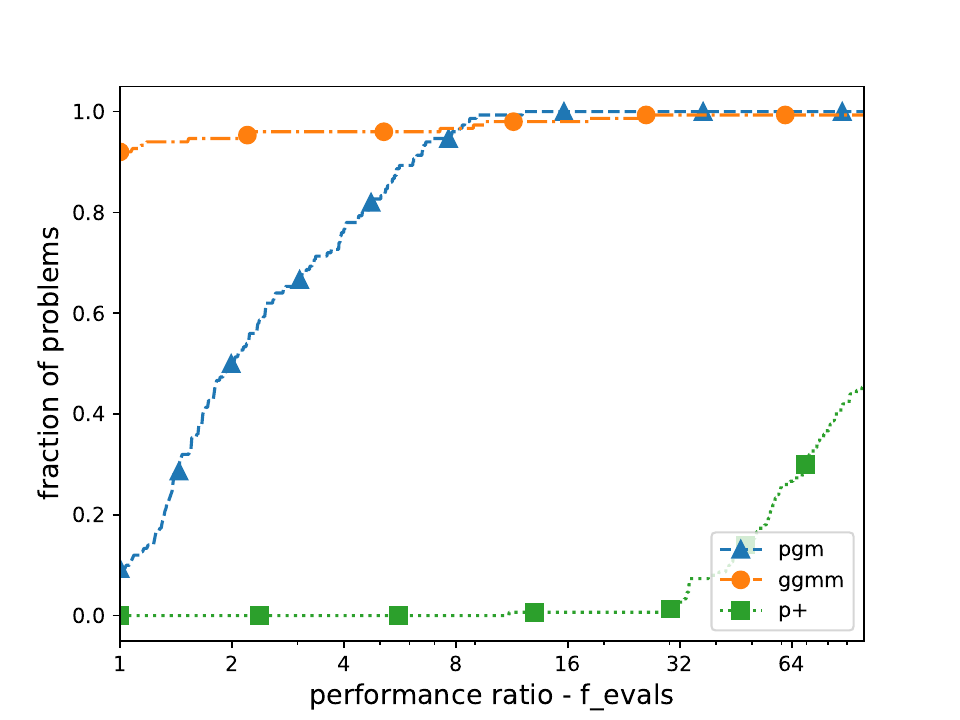}
  \caption{Number of function evaluations}
  \label{fig:pp_f_evals_MC}
    \end{subfigure}
\hfill
    \begin{subfigure}{0.475\textwidth}
    \centering
    \includegraphics[width=\textwidth]{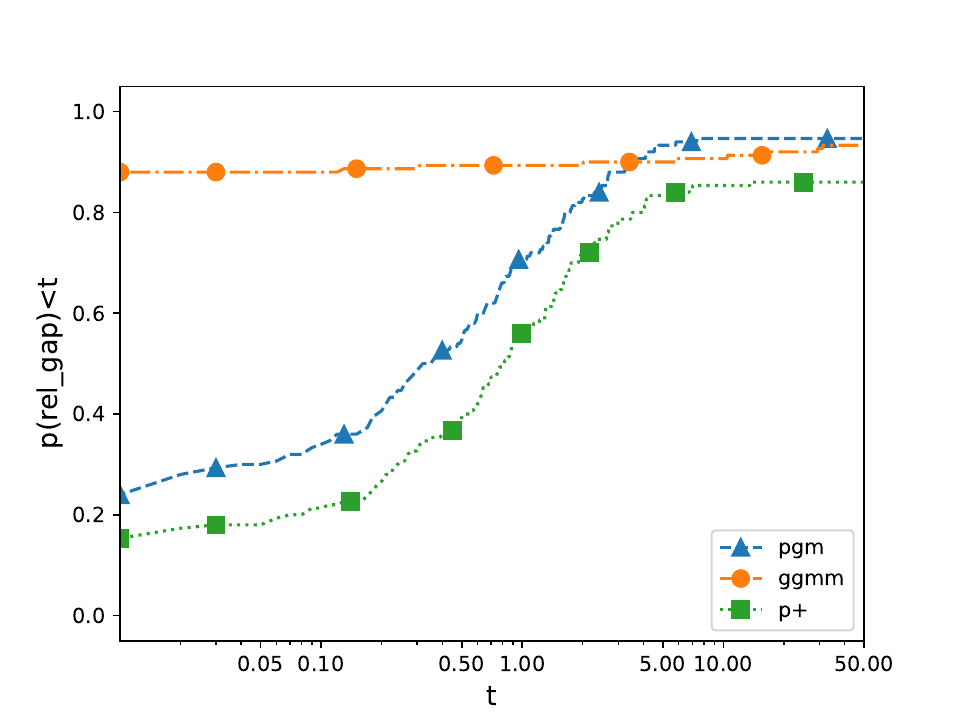}
  \caption{Final function value}
  \label{fig:pp_f_val_MC}
    \end{subfigure}

  \caption{Performance profiles for the one-bit low-rank matrix completion problems.}
  \label{fig:pp_MC}
\end{figure}

In addition to these synthetic instances, we consider a real-world problem involving movie ratings. We use the \texttt{MovieLens-latest-small} dataset\footnote{available at \href{https://grouplens.org/datasets/movielens/}{\tt https://grouplens.org/datasets/movielens/}}, which contains $100836$ ratings (with scores ranging from 1 to 5) assigned by $610$ users to $9724$ movies, represented by a sparse matrix. As in the previous experiments, we consider the one-bit formulation of the problem. Therefore, the ratings are converted into binary labels: $-1$ for scores equal to 1 or 2 (i.e., \textit{dislike}) and $+1$ for scores equal to 3, 4, or 5 (i.e., \textit{like}).

In this experiment, we set the time limit to $3600$ seconds (one hour). GGMM meets the stopping criterion after $908$ seconds and $457$ iterations, whereas PGM and P+ both reach the time limit, after $1970$ and $1608$ iterations, respectively. The evolution of the objective value and of the stopping criterion with respect to time is reported in Figure~\ref{fig:movies}.

\begin{figure}[htbp]
  \centering
  
  \begin{subfigure}{0.475\textwidth}
    \centering
    \includegraphics[width=\textwidth]{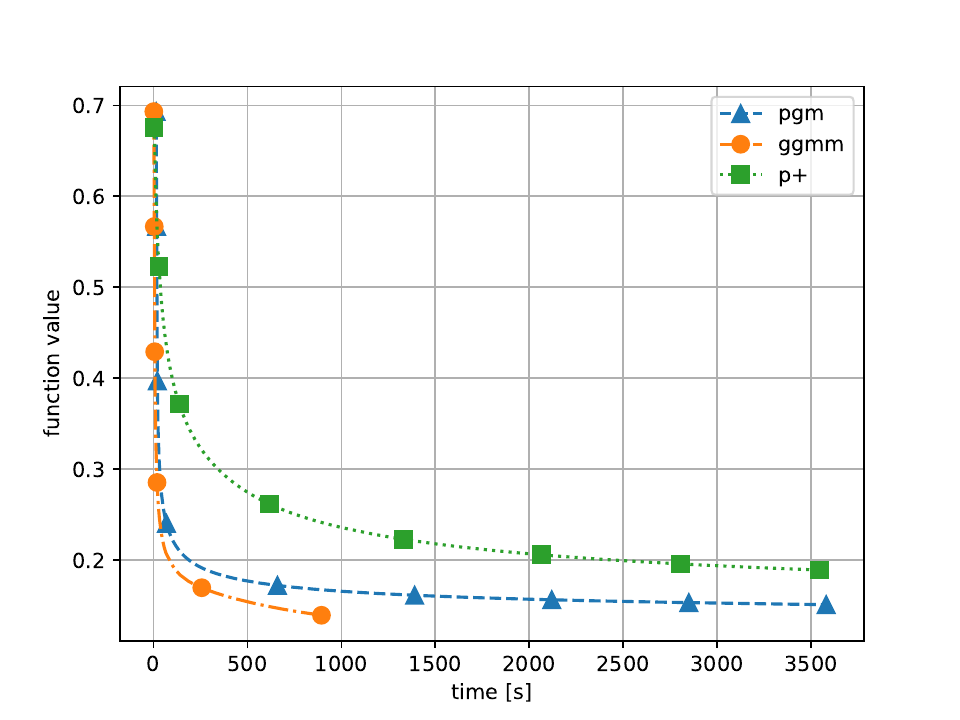}
  \caption{Function value}
  \label{fig:movies_f}
    \end{subfigure}
\hfill
\begin{subfigure}{0.475\textwidth}
    \centering
    \includegraphics[width=\textwidth]{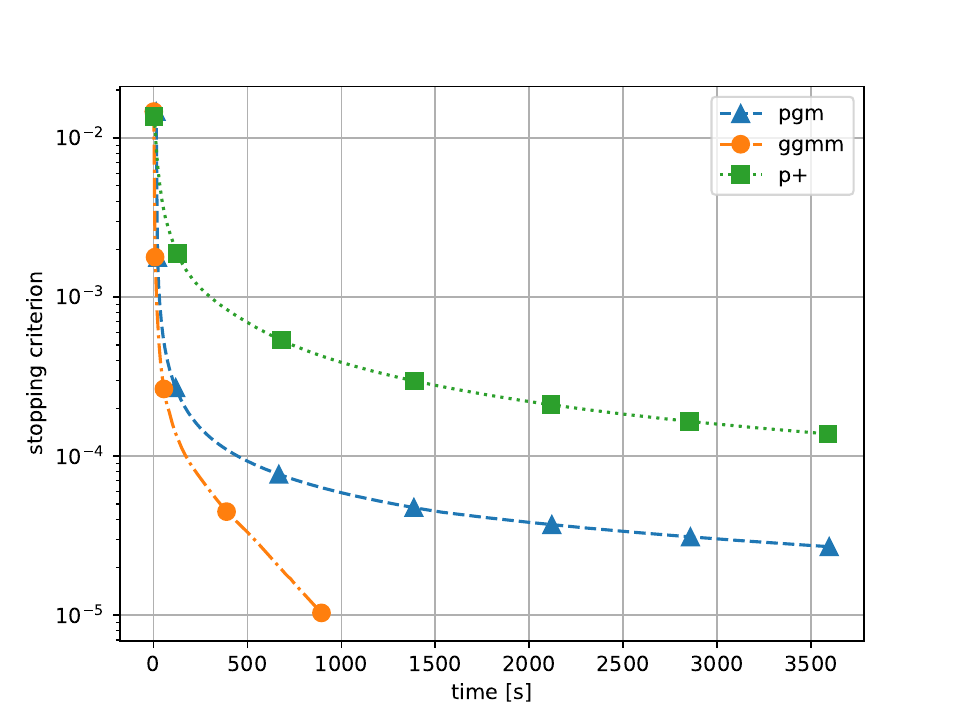}
  \caption{Stopping criterion}
  \label{fig:movies_criterion}
    \end{subfigure}

  \caption{Function value and stopping criterion against time for the \texttt{MovieLens-latest-small} problem.}
  \label{fig:movies}
\end{figure}

\section{Conclusions}
\label{sec:conclusions}
In this paper, we presented a projection-based optimization method that incorporates momentum terms while preserving the strong convergence guarantees of projected gradient methods under nonconvex geometric constraints. The considered algorithmic framework, based on curve backtracking for the pre-projection step,  overcomes the roadblocks associated with straightforward extensions of the PGM and ensures convergence to M-, B- or P-stationary points under assumptions that concern either methodological choices or objective smoothness. These theoretical guarantees match those available from the literature for other frameworks under comparable settings.  

Computational experiments on sparsity- and rank-constrained optimization problems indicate that the proposed approach effectively accelerates convergence while maintaining the robustness and solution quality of existing projection-based framework.

Future work will focus on understanding whether and how the backtracking procedure introduced in this manuscript could be exploited to integrate general descent directions, and not just momentum terms, within the projected gradient framework. Also, the convergence towards B- and P-stationary points of the GGMM algorithm when $\tau$ is set to zero, i.e., when backtracking reverts to negative gradient steps only asymptotically, remains an open question. Finally, the extension of the proposed approach to the more general class of proximal gradient methods for composite optimization could be investigated.

\section*{Declarations}

\subsection*{Funding}

No funding was received for conducting this study.

\subsection*{Competing interests}

The authors have no competing interests to declare that are relevant to the content of this article.

\subsection*{Data Availability Statement}
Data sharing is not applicable to this article as no new data were created or analyzed in this study.

\subsection*{Code Availability Statement}
The code developed for the experimental part of this paper is publicly available at  \href{https://github.com/DiegoScuppa/GGMM}{\tt https://github.com/DiegoScuppa/GGMM}. 

\bibliography{sn-bibliography}

\appendix
\section{Additional Proofs}
\label{sec:appendix}

\begin{proof}[Proof of Proposition~\ref{prop:conv_diff}]
Let $\{x_k\}$ be the infinite sequence generated by Algorithm~\ref{alg:ggmm}. Then, for every $k \in \mathbb N$, since the inner loop terminates, we have
$$f(x_{k+1}) \le \mu_k - \frac{c}{2\alpha_k}\|x_{k+1} - x_k\|^2 \le \mu_k - \frac{c}{2\alpha_{\max}}\|x_{k+1} - x_k\|^2 \le \mu_k.$$
Suppose that the \emph{average-rule} for nonmonotone decrease is employed. As the sequence $\{x_k\}$ has an accumulation point $\bar x \in D$, then the sequences $\{f(x_k)\}$ and $\{\mu_k\}$ converge to the same limit $\bar \mu \in \mathbb R$ (see, e.g.,~\cite[Prop.~4.7]{olikier2025projected}). We therefore have
$$\lim_{k \to \infty} \frac{c}{2\alpha_{\max}}\|x_{k+1} - x_k\|^2 =0;$$
i.e., the thesis. 

Now, suppose that the \emph{max-rule} is employed and that $f$ is bounded from below and uniformly continuous in $\mathcal L_0$. Then, for every $k \in \mathbb N$, let $m_k = \min\{M,k\} $, and let $ l(k)\in\{\max\{0,k-M\},\ldots,k\} $ be an index that attains the maximum of $f$ over the past $m_k$ iterations, i.e., such that
\begin{equation*}
   f(x_{l(k)}) = \max_{\shift= 0, 1, \ldots,m_k} f(x_{k-\shift}) \qquad \forall k \in \mathbb{N}.
\end{equation*}
As the backtracking terminates, we know
\begin{equation}\label{eqn:10}
  f(x_{k+1}) \le f(x_{l(k)}) - \frac{c}{2\alpha_k}\|x_{k+1} - x_k\|^2 .
\end{equation}
We can then point out that $ \{f(x_{l(k)} ) \}_k $ is monotonically decreasing:
noting that $ m_{k+1} \leq m_k + 1 $, we indeed have
\begin{align*}
   f ( x_{l(k+1)} ) & = \max_{\shift=0,1, \ldots, m_{k+1}}f(x_{k+1-\shift}) \\
   & \leq \max_{\shift=0,1, \ldots, m_k +1}f(x_{k+1-\shift}) \\
   & =  \max \big( \max_{\shift = 0, 1, \ldots, m_k}f(x_{k-\shift}),f(x_{k+1}) \big) \\
   & =  \max \big(f(x_{l(k)}),f(x_{k+1}) \big) \\
   & = f(x_{l(k)}),
\end{align*}
where the last equality follows from~\eqref{eqn:10}. Since $ f $ is bounded 
below by some finite value $f^*$, we have
\begin{equation}\label{eqn:Limitfk}
   \lim_{k \to \infty} f( x_{l(k)} ) = f^*.
\end{equation}
Using~\eqref{eqn:10}, replacing $ k $  with $ l(k)-1 $ and rearranging terms, we obtain
\begin{equation*}
   f(x_{l(k)}) -f(x_{l(l(k)-1)}) \leq - \frac{c}{2\alpha_{l(k)-1}}
   \| x_{l(k)} - x_{l(k)-1} \|^2 \leq 0.
\end{equation*}
We can take the limits, recalling~\eqref{eqn:Limitfk}, to obtain
\begin{equation*}
   \lim_{k \to \infty} \frac{\| x_{l(k)} - x_{l(k)-1} \|^2}{\alpha_{l(k)-1}} = 0 .
\end{equation*}
Let us define for notational convenience $ \omega_k = x_{k+1} - x_{k} $; since $ \alpha_k \le \alpha_{\max}$ for all $ k$, we immediately get
\begin{equation}\label{eqn:Ind1}
   \lim_{k \to \infty} \omega_{l(k)-1} = 0.
\end{equation}
By~\eqref{eqn:Limitfk} and~\eqref{eqn:Ind1}, recalling the uniform continuity of $f$, we further get
\begin{equation}\label{eqn:Ind2}
   f^* = \lim_{k \to \infty}f(x_{l(k)}) =
   \lim_{k \to \infty} f(x_{l(k)-1} + \omega_{l(k)-1} ) =
   \lim_{k \to \infty} f (x_{l(k)-1}).
\end{equation}
We now show by induction that 
\begin{equation}\label{eqn:Indj}
   \lim_{k \to \infty} \omega_{l(k)-\shift} = 0 \quad \text{and} \quad 
   \lim_{k \to \infty}f(x_{l(k)-\shift}) = f^* \quad
   \forall \shift \in\N.
\end{equation}
The above limits hold for $\shift=1$, as stated with~\eqref{eqn:Ind1} and~\eqref{eqn:Ind2}. We hence suppose that~\eqref{eqn:Indj} holds for some $ \shift \geq 1 $ and analyze the case of $ \shift+1 $. Replacing $ k $ by  $ l(k)-\shift-1 $ in~\eqref{eqn:10}, and assuming without loss of generality that $ k $ is large enough such that all indices
$ l(k)-\shift-1 $ are positive, we get
\begin{equation*}
  f(x_{l(k)-r}) -f(x_{l(l(k)-r-1)}) \leq - \frac{c}{2\alpha_{l(k)-r-1}}
   \| \omega_{l(k)-r-1} \|^2 .
\end{equation*}
Rearranging and recalling that $ \alpha_k \le
\alpha_{\max} $ for all $ k $, we obtain
\begin{equation*}
   \| \omega_{l(k)-\shift-1} \|^2 \leq \frac{2\alpha_{\max}}{c }
   \big(f(x_{l(l(k)-r-1)})  - f(x_{l(k)-r})\big) .
\end{equation*}
Taking the limit for $ k \to \infty $, exploiting~\eqref{eqn:Limitfk} together with the induction
hypothesis, we conclude that 
\begin{equation}\label{eqn:dstar}
   \lim_{k \to \infty} \omega_{l(k)-\shift-1} = 0,
\end{equation}
i.e., the first limit in~\eqref{eqn:Indj} holds for the case $\shift+1$.
The second limit in~\eqref{eqn:Indj} instead is obtained using the first one and the uniform continuity of $f$, that allow us to write from 
\begin{equation*}
   \lim_{k \to \infty} f(x_{l(k)-(\shift+1)}) =
   \lim_{k \to \infty} f(x_{l(k)-(\shift+1)} + \omega_{l(k)-(\shift+1)}) =
   \lim_{k \to \infty} f(x_{l(k)-\shift}) =
   f^*.
\end{equation*}
We finally have to prove that $ \lim_{k \to \infty} \omega_k = 0 $. 
Let us assume, by contradiction, that the limit is false, i.e.,  there exists constant $ \rho > 0 $ and a (suitably shifted, for notational
simplicity) subsequence $K \subset \mathbb N $
such that
\begin{equation}\label{eqn:Contrad}
   \| \omega_{k-M-1} \| \geq \rho \qquad \forall k \in K.
\end{equation}
For each $ k \in K $, the corresponding index $ l(k) $ is found by definition among
$ k - M, k - M + 1, \ldots, k $. We can therefore write $ k - M - 1 = l(k) - \shift_k $
for some index $ \shift_k \in \{ 1, 2, \ldots, M+1 \} $. Since the number of possible indices $ \shift_k $ is finite, we can assume without loss of generality that
$ \shift_k = \shift $ for all $k$. By~\eqref{eqn:Indj} we then obtain
\begin{equation*}
   \lim_{k \to \infty, k \in K} \omega_{k-M-1} = \lim_{k \to \infty, k \in K} \omega_{l(k) - \shift} = 0,
\end{equation*}
which contradicts~\eqref{eqn:Contrad}.
\end{proof}

\medskip
\begin{proof}[Proof of Proposition~\ref{prop:AcceptedStep}]
Define $\kappa$ as in Proposition~\ref{prop:ArbitrarySmallAlpha}. Let $\xi \in (0, \infty)$ be small enough to ensure
\begin{subequations}
\begin{align}
\label{eqn:def_alpha_x_small1}
\sup_{y \in  B\left(\bar x, \frac{7\xi}{2\delta}\|\nabla f(\bar x)\|\right) \cap D \setminus \{\bar x\}}
\frac{|f(y)-f(\bar x) - \langle\nabla f(\bar x),y-\bar x\rangle|}{\|y-\bar x\|}
&< \frac{(1-c)\sqrt{1-\kappa^2}\|\nabla f(\bar x)\|}{4\left(1 + \frac{8}{3(1-\kappa)}\right)},\\
\label{eqn:def_alpha_x_small2}
\sup_{y \in  B\left(\bar x, \frac{7\xi}{2\delta}\|\nabla f(\bar x)\|\right) \cap D}
\|\nabla f(y)-\nabla f(\bar x)\|
&< \frac{(1-c)\sqrt{1-\kappa^2}}{4} \|\nabla f(\bar x)\|.
\end{align}
\end{subequations}
Such a value $\xi$ certainly exists by the definition and the continuity of the gradient at $\bar{x}$.

Then, we can define $\varepsilon \coloneq \min\left\{ \xi,\delta\tau\right\}$, let $\alpha_{\bar x} \in (0, \varepsilon]$ and choose $\bar{\rho}(\alpha_{\bar x})\in(0,\infty)$ as given in Proposition~\ref{prop:ArbitrarySmallAlpha}. 

Since $\alpha_{\bar x} \le \varepsilon \le \delta\tau$, we can consequently observe that, for every $\alpha \in [\alpha_{\bar x}, \alpha_{\bar x}/\delta]$, we have $\alpha \le \alpha_{\bar x}/\delta\le\tau$, and hence $\max\{\alpha-\tau,0\}=0$.   For all $\alpha_0\in[\alpha_{\min}, \alpha_{\max}]$, $\beta_0\in[0,\beta_{\max}]$, and $a\ge1$, we therefore have 
$\beta(\alpha) = \beta_0 \big(\frac{\max\{\alpha-\tau,0\}}{\alpha_0-\tau}\big)^a =  0$. 
Then, for all $s \in \mathbb X$,  $P_D(x-\alpha\nabla f(x)+\beta(\alpha) s)=P_D(x-\alpha\nabla f(x))$. 

We define
$
\rho = \min\left\{\bar{\rho}(\alpha_{\bar x}), \alpha_{\bar x} \|\nabla f(\bar x)\|\right\},
$
and note, for all $x \in B(\bar x, \rho) \cap D$, that
\begin{equation*}
\|x-\bar x\|
< \rho \le \alpha_{\bar x} \|\nabla f(\bar x)\|
< \frac{7\alpha_{\bar x}}{2\delta} \|\nabla f(\bar x)\|
\leq \frac{7\xi }{2\delta} \|\nabla f(\bar x)\|,
\end{equation*}
so that from~\eqref{eqn:def_alpha_x_small2} we have $\|\nabla f(x)-\nabla f(\bar x)\| < \frac{\|\nabla f(\bar x)\|}{4}$ and then
\begin{equation}
    \label{eqn:nablas_ushortx_x}
    \begin{aligned}
        \frac{3}{4} \|\nabla f(\bar x)\|
& < \|\nabla f(\bar x)\|-\|\nabla f(x)-\nabla f(\bar x)\|  \le \|\nabla f(x)\| \\
& \le \|\nabla f(\bar x)\|+\|\nabla f(x)-\nabla f(\bar x)\|  < \frac{5}{4} \|\nabla f(\bar x)\|.
    \end{aligned}
\end{equation}
Thus, for all $x \in B(\bar x, \rho) \cap D$, $\alpha \in [\alpha_{\bar x}, \alpha_{\bar x}/\delta]$, and $y \in P_D(x-\alpha\nabla f(x)+\beta(\alpha) s)=P_D(x-\alpha\nabla f(x))$, we can write
\begin{equation}
    \begin{aligned}
f(y)
&= f(x) + \langle \nabla f(\bar x),y-x\rangle  + \left(f(\bar x) - f(x) - \langle\nabla f(\bar x),\bar x-x\rangle\right) \\
&\qquad + \left(f(y) - f(\bar x) - \langle\nabla f(\bar x),y-\bar x\rangle\right) \\
&\le f(x) + \langle\nabla f(\bar x),y-x\rangle +
\frac{(1-c)\sqrt{1-\kappa^2}\|\nabla f(\bar x)\|}{4\left(1 + \frac{8}{3(1-\kappa)}\right)} \left(\|\bar x-x\| + \|y-\bar x\|\right),
\label{eqn:UpperBoundfDiffGrad}
\end{aligned}
\end{equation}
where the inequality follows from~\eqref{eqn:def_alpha_x_small1}. 
Observe that
\begin{align*}
\|y-\bar x\|
&\le \|y-x\| + \|x-\bar x\| \\
&\le 2 \alpha \|\nabla f(x)\| + \rho \\
&\le \frac{2 \alpha_{\bar x}}{\delta} \|\nabla f(x)\|
+ \alpha_{\bar x} \|\nabla f(\bar x)\| \\
&< \frac{5\alpha_{\bar x}}{2\delta} \|\nabla f(\bar x)\|
+ \alpha_{\bar x} \|\nabla f(\bar x)\|  \\
&\le \frac{7\alpha_{\bar x}}{2\delta} \|\nabla f(\bar x)\|,
\end{align*}
where the second inequality follows from~\eqref{eqn:propr_proj1}, and the forth one from~\eqref{eqn:nablas_ushortx_x}.

Continuing from~\eqref{eqn:UpperBoundfDiffGrad}, we have:
\begin{align*}
f(y)
& \le f(x) + \langle\nabla f(\bar x), y-x\rangle +
\frac{(1-c)\sqrt{1-\kappa^2}\|\nabla f(\bar x)\|}{
4\left(1 + \frac{8}{3(1-\kappa)}\right)} \left(2\|\bar x-x\| + \|y-x\|\right) \\
&< f(x) + \langle\nabla f(\bar x),y-x\rangle
+ \frac{(1-c)\sqrt{1-\kappa^2}\|\nabla f(\bar x)\|}{4} \|y-x\| \\
&\le f(x) + \langle\nabla f(x), y-x\rangle + \|\nabla f(\bar x)-\nabla f(x)\| \|y-x\| \\
&\qquad + \frac{(1-c)\sqrt{1-\kappa^2}\|\nabla f(\bar x)\|}{4} \|y-x\| \\
&\le f(x) + \langle\nabla f(x),y-x\rangle + \frac{(1-c)\sqrt{1-\kappa^2}\|\nabla f(\bar x)\|}{2} \|y-x\|\\
&< f(x) + \langle\nabla f(x),y-x\rangle + (1-c)\sqrt{1-\kappa^2} \|\nabla f(x)\|\|y-x\|\\
&\le f(x) + \langle\nabla f(x),y-x\rangle - (1-c) \langle\nabla f(x),y-x\rangle\\
&= f(x) + c \langle \nabla f(x), y-x\rangle \le f(x) - \frac{c}{2\alpha}\|y-x\|^2,
\end{align*}
where the fourth inequality comes  from~\eqref{eqn:def_alpha_x_small2}, the fifth one follows from~\eqref{eqn:nablas_ushortx_x}, the sixth one from Proposition~\ref{prop:ArbitrarySmallAlpha} and the last one from~\eqref{eqn:propr_proj2}. As for the second inequality, it is true because
\begin{align*}
\|y-x\|
&\ge \alpha \|\nabla f(x)\| - \|x-\alpha \nabla f(x)-y\|
\\
&= \alpha \|\nabla f(x)\| - d_D(x-\alpha\nabla f(x)) \\
&\ge (1-\kappa) \alpha \|\nabla f(x)\|
\\
&\ge \frac{3}{4} (1-\kappa) \rho
\\
&> \frac{3}{4} (1-\kappa) \|x-\bar x\|,
\end{align*}
where in the first step we exploit triangle inequality, the third follows from Proposition~\ref{prop:ArbitrarySmallAlpha} and the fourth inequality holds by~\eqref{eqn:nablas_ushortx_x}, the definition of $\rho$, and $\alpha_{\bar x} \le \alpha$.
\end{proof}

\end{document}